\pdfoutput=1
\documentclass[12pt]{amsart}
\usepackage{amsmath,amssymb,amsthm,amsfonts}
\usepackage{mathrsfs}

\usepackage[colorlinks=true,allcolors=blue]{hyperref}
\usepackage[initials,nobysame,alphabetic,msc-links,backrefs]{amsrefs}
\usepackage[margin=2.5cm,heightrounded=true]{geometry}

\usepackage{enumitem}

\usepackage{latexsym}

\allowdisplaybreaks[1]
 
\numberwithin{equation}{section}
\newtheorem{theorem}{Theorem}[section]
\newtheorem{proposition}[theorem]{Proposition}
\newtheorem{lemma}[theorem]{Lemma}

\def\({\left(}
\def\){\right)}
\def\[{\left[}
\def\]{\right]}
\def\<{\langle}
\def\>{\rangle}

\newcommand{\beq}{\begin{equation}}
\newcommand{\eeq}{\end{equation}}
 
\newcommand{\la}{\lambda}

\newcommand{\UU}{\mathscr U}

\newcommand{\W}{\mathcal W}
\newcommand{\rr}{ \mathbb{R}}
\renewcommand{\L}{\mathcal L}
 
\renewcommand\bf\bfseries

\usepackage{colonequals}
\newcommand{\coloneqq}{\colonequals}
\newcommand{\eqqcolon}{\equalscolon}

\begin{document}
\title[Large conformal metrics with prescribed scalar curvature]{Large conformal metrics with prescribed scalar curvature}

\author{Angela Pistoia}
\address{{\textrm Angela Pistoia}, Dipartimento SBAI, Universit\`a di Roma ``La Sapienza", via Antonio Scarpa 16, 00161 Roma, Italy}
\email{angela.pistoia@uniroma1.it}

\author{Carlos Rom\'an}
\address{{\textrm Carlos Rom\'an}, Sorbonne Universit\'es, UPMC Univ Paris 06, CNRS, UMR 7598, Laboratoire Jacques-Louis Lions, 4, place Jussieu 75005, Paris, France}
\email{roman@ann.jussieu.fr}

\subjclass[2010]{35J60,53C21}
\keywords{Prescribed scalar curvature, blow-up phenomena}
\thanks{The first author has been supported by Fondi di Ateneo ``Sapienza''. The second author has been supported by a public grant overseen by the French National Research Agency (ANR) as part of the ``Investissements d'Avenir'' program (reference: ANR-10-LABX-0098, LabEx SMP)}
\date{\today}

\begin{abstract}
Let $(M,g)$ be an $n-$dimensional compact Riemannian manifold. Let $h$ be a smooth function on $M$ and assume that it has a critical  point $\xi\in M$ such that $h(\xi)=0$ and which satisfies a suitable flatness assumption. We are interested in finding conformal metrics $g_\lambda=u_\lambda^\frac4{n-2}g$, with $u>0$, whose scalar curvature is the prescribed function $h_\lambda\coloneqq \lambda^2+h$, where $\lambda$ is a small parameter.

In the positive case, i.e. when the scalar curvature $R_g$ is strictly positive, we find a family of ``bubbling'' metrics $g_\lambda$, where $u_\lambda$ blows-up at the point $\xi$ and approaches zero far from $\xi$ as $\lambda$ goes to zero.

In the general case, if in addition we assume that there exists a non-degenerate conformal metric $g_0=u_0^\frac4{n-2}g$, with $u_0>0$, whose scalar curvature is equal to $h$, then there exists a bounded family of conformal metrics $g_{0,\lambda}=u_{0,\lambda}^\frac4{n-2}g$, with $u_{0,\la}>0$, which satisfies $u_{0,\lambda}\to u_0$ uniformly as $\la\to 0$. Here, we build a second family of ``bubbling'' metrics $g_\lambda$, where $u_\lambda$ blows-up at the point $\xi$ and approaches $u_0$ far from $\xi$ as $\lambda$ goes to zero. In particular, this shows that this problem admits more than one solution.
\end{abstract}
\maketitle

\section{Introduction}
Let $(M,g)$ be a smooth compact manifold without boundary of dimension $n\geq 3$. The prescribed scalar curvature problem (with conformal change of metric) is

{\em given a function $h$ on $M$ does there exist  a metric $\tilde g$ conformal  to $ g$ such that   the scalar curvature of  $  \tilde g $ equals $h$?}

Given a metric $\tilde g$ conformal to $g$, i.e.  $\tilde g=u^\frac4{n-2}g$ for some smooth conformal factor $u>0$, this problem is equivalent to finding a solution to 
$$
- \Delta_g u+  c(n)R_g u=c(n)hu^p,\quad u>0 \quad \mbox{on }M,
$$
where $\Delta_g=\mbox{div}_g\nabla_g$ is the Laplace-Beltrami operator, $c(n)=\frac{n-2}{4(n-1)}$, $p=\frac{n+2}{n-2}$, and $R_g$ denotes the scalar curvature associated to the metric $g$. Since it does not change the problem, we replace $c(n)h$ with $h$. We are then led to study the problem
\begin{equation}\label{p0}
- \Delta_g u+  c(n)R_g u=hu^p,\quad u>0 \quad \mbox{on }M,
\end{equation}
We suppose that $h$ is not constant, otherwise we would be in the special case of the Yamabe problem which has been completely solved in the works by Yamabe \cite{yam}, Trudinger \cite{tru}, Aubin \cite{Aubin2}, and Schoen \cite{Sch}.
For this reason we can assume in \eqref{p0} that $R_g$ is a constant.

In the book \cite{AubinBook}*{Chapter 6}, Aubin gives an exhaustive description of known results. Next, we briefly recall some of them.
\begin{itemize}[font=\normalfont,leftmargin=*]
\item{\bf The negative case, i.e. $R_g<0$.}

\noindent
A necessary condition for existence is that $\int\limits_M h d\nu_g<0$ (a more general result can be found in \cite{kawa}).

When $h<0$, \eqref{p0} has a unique solution (see for instance \cites{kawa,Aubin2}). The situation turns out to be more complicated when $h$ vanishes somewhere on $M$ or if it changes sign. When $\max_M h=0$, Kazdan and Warner \cite{kawa}, Ouyang \cite{ouy}, V\'azquez and V\'eron \cite{vave}, and del Pino \cite{delpino}  proved the existence of a unique solution, provided that a lower bound on $R_g$, depending on the zero set of $h$, is satisfied. 
The general case was studied by Rauzy in \cite{rau1}, who extended the previous results to the case when $h$ changes sign. Letting $h^-\coloneqq\min\{h,0\}$ and $h^+\coloneqq\max\{h,0\}$, the theorem proved in \cite{rau1} reads as follows.

\begin{theorem} \label{zero-} Let $\mathcal A\coloneqq\left\{u\in H^1_g(M)\ |\ u\ge0,\ u\not\equiv0,\ \int\limits_M h^-ud\nu_g=0\right\}$ and 
$$
\Lambda_0\coloneqq\inf\limits_{u\in\mathcal A}\frac{\int\limits_M|\nabla u|^2d\nu_g}{\int\limits_M u^2d\nu_g},
$$
with $\Lambda_0=+\infty $ if $\mathcal A=\emptyset$.
There exists a constant $C(h)>0$, depending only on $\frac{\min_Mh^-}{\int\limits_M h^-d\nu_g}$, such that if
\begin{equation}\label{ass-rau}-c(n)R_g<\Lambda_0\quad \hbox{and}\quad \frac{\max_Mh^+}{\int\limits_M| h^-|d\nu_g}<C(h)\end{equation}
then \eqref{p0} has a solution.
\end{theorem}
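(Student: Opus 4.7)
The natural strategy is constrained minimization. Define
$$
\mathcal{B} \coloneqq \left\{ u \in H^1_g(M) : u\ge 0,\ \int_M h\, u^{p+1}\, d\nu_g = 1\right\},\qquad \mu\coloneqq\inf_{u\in\mathcal{B}}\int_M\!\!\left(|\nabla u|^2 + c(n)R_g u^2\right)d\nu_g.
$$
Since $h^+\not\equiv 0$ (otherwise one is in the situation of Kazdan--Warner recalled above), test functions supported where $h>0$ show that $\mathcal{B}\neq\emptyset$, so $\mu<+\infty$. Any minimizer $u_\star$ satisfies, by Lagrange multipliers, the Euler--Lagrange equation $-\Delta_g u_\star + c(n)R_g u_\star = \mu\, h\, u_\star^p$ in the weak sense, and once $\mu>0$ and $u_\star>0$ are established, the rescaling $\mu^{1/(p-1)}u_\star$ solves \eqref{p0}.

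The heart of the proof is to extract a strongly convergent, nontrivial subsequential limit from a minimizing sequence $(u_k)\subset\mathcal{B}$. Two difficulties must be handled in parallel: the quadratic form $J(u)\coloneqq\int_M(|\nabla u|^2+c(n)R_g u^2)\,d\nu_g$ is indefinite because $R_g<0$, and the exponent $p+1$ is Sobolev-critical. Condition $-c(n)R_g<\Lambda_0$ in \eqref{ass-rau} addresses the first difficulty: I would argue by a dichotomy on the quantity $\beta_k\coloneqq\int_M |h^-|\, u_k^{p+1}\,d\nu_g$. When $\beta_k$ is large, the constraint $\int_M h\, u_k^{p+1}=1$ forces $\int_M h^+ u_k^{p+1}\gtrsim \beta_k$, and the smallness assumption $\max_M h^+/\int_M|h^-|\,d\nu_g<C(h)$ from the second inequality in \eqref{ass-rau} yields a lower bound on the $L^{p+1}$-mass of $u_k$ concentrated in $\{h^+>0\}$, which through a Poincar\'e-type inequality adapted to $\mathcal{A}$ produces an $H^1$-bound. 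When $\beta_k$ is small, a suitable renormalization of the weak limit of $(u_k)$ lies close to $\mathcal{A}$, and the variational characterization of $\Lambda_0$ gives
$$
J(u_k)\ge \bigl(\Lambda_0+c(n)R_g-o(1)\bigr)\,\|u_k\|_{L^2(M)}^2,
$$
so the strict inequality $-c(n)R_g<\Lambda_0$ furnishes the required coercivity. The constant $C(h)$ in the statement is precisely calibrated so that these two regimes together cover every minimizing sequence.

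Once $(u_k)$ is bounded in $H^1_g$, standard concentration-compactness shows that any blow-up point $x_0$ must belong to $\{h>0\}$; the lost mass contributes an energy quantum at least $K(n)^{-2}\,(h(x_0))^{-(n-2)/n}\ge K(n)^{-2}(\max_M h^+)^{-(n-2)/n}$ to $\mu$. The second inequality in \eqref{ass-rau} makes this threshold large, while an explicit upper bound for $\mu$ built from any convenient element of $\mathcal{B}$ places $\mu$ strictly below it, ruling out concentration and yielding strong $L^{p+1}$-convergence to a minimizer $u_\star\in\mathcal{B}$. Elliptic regularity, testing the Euler--Lagrange equation against $u_\star$, and the strong maximum principle then give $u_\star>0$ on $M$ and $\mu=J(u_\star)>0$. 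The \emph{main obstacle} is the dichotomy described in the previous paragraph: the explicit form of $C(h)$, and in particular its dependence on $\min_M h^-/\int_M h^-\,d\nu_g$, emerges from the quantitative patching of the two regimes into a single a-priori $H^1$-bound, and both inequalities of \eqref{ass-rau} are jointly necessary for this patching to close.
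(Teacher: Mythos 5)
This statement is Rauzy's theorem, quoted in the paper's introduction from \cite{rau1} (with the explicit dependence of the constant $C(h)$ coming from the Aubin--Bismuth refinement \cite{aubis}); the paper itself does not prove it, so there is no internal argument to compare your sketch against. I therefore assess the sketch on its own merits: the global strategy you propose (constrained minimization over $\mathcal B=\{u\ge 0,\ \int_M h\,u^{p+1}\,d\nu_g=1\}$, an $H^1$ a priori bound enforced by the two conditions in \eqref{ass-rau}, then a concentration--compactness threshold to recover compactness at the critical exponent) is a plausible reconstruction of a Rauzy-type argument and is consistent with how the negative case is usually handled.

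However, the two steps you identify as ``the heart'' and ``the main obstacle'' are precisely the ones left as assertions, and they do not close as written. First, your dichotomy on $\beta_k=\int_M|h^-|u_k^{p+1}\,d\nu_g$ is intended to invoke $\Lambda_0$, but $\Lambda_0$ is defined through the \emph{linear} constraint $\int_M h^- u\,d\nu_g=0$, while $\beta_k$ measures a \emph{critical-power} mass. The claim that ``$\beta_k$ small implies $u_k$ is close to $\mathcal A$'' requires a quantitative comparison between these two notions of smallness (some H\"older/Sobolev interpolation on $\{h<0\}$), which is where the specific ratio $\min_M h^-/\int_M h^-\,d\nu_g$ must enter; without it, the coercivity estimate and the claimed dependence of $C(h)$ are unsubstantiated. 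Second, the inequality $\mu<K(n)^{-2}(\max_M h^+)^{-(n-2)/n}$ needed to exclude concentration is not provided by ``any convenient element of $\mathcal B$'': the Sobolev inequality gives, for \emph{every} competitor, $\|\nabla u\|_{L^2}^2\ge K(n)^{-2}(\max_M h^+)^{-(n-2)/n}$, so a strict gap must come from the negative zero-order term $c(n)R_g\|u\|_{L^2}^2<0$ via a calibrated bubble test function (the Aubin/Brezis--Nirenberg mechanism), not from the hypotheses \eqref{ass-rau}. In your sketch the roles of $R_g<0$ and of \eqref{ass-rau} are conflated: the latter controls the a priori bound (and keeps the weak limit nontrivial with the Lagrange multiplier positive), while the former is what produces the strict energy gap. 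Filling in these two computations is exactly where the real work of Rauzy (and Aubin--Bismuth) lies.
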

The dependence of the constant $C(h)$ on the function $h^-$ can be found in \cite{aubis}.
An interesting feature is that if $h$ changes sign then the uniqueness is not true anymore, as showed by Rauzy in \cite{rau2}.

\begin{theorem}\label{teo-rau2}
Assume \eqref{ass-rau} and let $\xi\in M$ such that $h(\xi)=\max\limits_M h>0$. If one of the following conditions hold:
\begin{enumerate}[leftmargin=*]
\item $6\le n\le9$ and $\Delta_g h(\xi)=0$;
\item $n\ge 10,$ the manifold is not locally conformally flat, and $\Delta_g h(\xi)=\Delta_g ^2h(\xi)=0$;
\end{enumerate}
then \eqref{p0} admits at least two distinct  solutions.
\end{theorem}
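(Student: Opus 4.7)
The plan is to exhibit two distinct positive solutions of \eqref{p0}: a ``small'' one $u_0$ produced directly by Theorem \ref{zero-}, and a second one $u_*$ obtained by attaching to $u_0$ an Aubin--Talenti bubble concentrating at $\xi$, constructed via a Lyapunov--Schmidt reduction. The flatness hypotheses on $h$ at $\xi$ are exactly what is needed to make the reduced problem solvable in each dimension range.

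Since \eqref{ass-rau} holds, Theorem \ref{zero-} directly supplies a smooth positive solution $u_0$ of \eqref{p0}. For the second solution, one looks for $u=u_0+W_{\delta,\xi}+\phi$, where $W_{\delta,\xi}$ is the Aubin--Talenti bubble $c_n\bigl(\delta/(\delta^2+d_g(x,\xi)^2)\bigr)^{(n-2)/2}$, cut off and conformally projected in geodesic normal coordinates at $\xi$, and $\phi$ is a small correction orthogonal (in a suitable Hilbert space) to the approximate kernel of the linearization of \eqref{p0} at $W_{\delta,\xi}$. The assumption $h(\xi)=\max_M h>0$ guarantees that the Aubin--Talenti profile is the correct local model. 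A standard contraction argument, based on the invertibility of the linearized operator on the orthogonal complement, produces $\phi=\phi(\delta)$ with $\|\phi(\delta)\|\to 0$ as $\delta\to 0^+$.

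One is thereby reduced to finding critical points in $\delta>0$ of $\widetilde{J}(\delta)\coloneqq J(u_0+W_{\delta,\xi}+\phi(\delta))$, where $J$ is the energy functional associated with \eqref{p0}. Its expansion takes the schematic form
$$
\widetilde{J}(\delta) = J(u_0) + A_n - B_n\, u_0(\xi)\, \delta^{\frac{n-2}{2}} + C_n\, \delta^2 \Delta_g h(\xi) + D_n\, \delta^4 \Delta_g^2 h(\xi) + \mathrm{rem}_n(\delta),
$$
with explicit positive constants and a remainder incorporating, for $n\ge 6$, a curvature contribution proportional to $|W_g(\xi)|^2$ (with a logarithmic factor at the critical dimension $n=6$). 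Since $h(\xi)=\max_M h$, the point $\xi$ is a critical point of $h$, which cancels the odd-order terms in its position. The dimension-dependent flatness hypotheses $\Delta_g h(\xi)=0$ and, for $n\ge 10$, $\Delta_g^2 h(\xi)=0$ kill precisely those otherwise dominant terms whose sign would prevent a critical point in $\delta$. In the regime $6\le n\le 9$, the bubble--$u_0$ interaction $-B_n u_0(\xi)\delta^{(n-2)/2}$, negative thanks to $u_0(\xi)>0$, then dominates and produces a strict interior maximum $\delta_*>0$ of $\widetilde{J}$; for $n\ge 10$ the same conclusion holds once the non-locally-conformally-flat assumption ensures that the Weyl contribution is non-vanishing and of the correct sign. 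The resulting $u_*= u_0+W_{\delta_*,\xi}+\phi(\delta_*)$ solves \eqref{p0} and is distinct from $u_0$, since $J(u_*)\neq J(u_0)$ by the expansion.

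The main obstacle is the precise energy expansion: one must identify, order by order, the dimension-dependent hierarchy of bubble self-energy, bubble--$u_0$ interaction, curvature contributions from $R_g$ and $W_g$, and flatness contributions from $h$, and then verify that under hypothesis (1) or (2) the leading remaining variable term has the order and sign needed to force an interior maximum in $\delta$. This is a delicate Aubin--Schoen type computation, made more intricate by the non-trivial background $u_0$, and is the source of the dimension thresholds $n=6$ and $n=10$ appearing in the statement.
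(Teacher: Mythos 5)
Theorem \ref{teo-rau2} is Rauzy's result and is cited from \cite{rau2}; the paper recalls it in the introduction purely as background and does not prove it, so there is no internal proof to compare your sketch against. The paper's own Theorem \ref{main0} is a different (though related) statement, treating the perturbed curvature $h_\lambda = \lambda^2 + h$ under the flatness assumption \eqref{h} with $h(\xi)=0$; its Lyapunov--Schmidt proof (Sections \ref{sec1}--\ref{sec2}) is what your outline most closely resembles, but the regime $h(\xi)=\max_M h>0$ of Theorem \ref{teo-rau2} leads to a different bubble scaling and energy hierarchy, so that proof cannot be read off as a proof of Rauzy's theorem.

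As a proposed proof of Theorem \ref{teo-rau2} itself, your sketch has two genuine gaps. First, the Lyapunov--Schmidt reduction you invoke requires the linearization of \eqref{p0} at $u_0$ to be invertible, i.e.\ that $u_0$ be a non-degenerate solution. Rauzy's statement carries no such hypothesis, and the only non-degeneracy result available in this paper, Lemma \ref{nonde}, holds under $\max_M h = 0$, which contradicts the standing assumption $h(\xi)=\max_M h>0$. Without non-degeneracy, both the contraction argument producing $\phi(\delta)$ and the solvability of the reduced linear problem fail, so some substitute argument (degree theory, generic perturbation of $h$, or a direct proof that $u_0$ is automatically non-degenerate in Rauzy's regime) is needed and is missing from your outline. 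Second, you reduce to a one-parameter problem in $\delta$, but the approximate kernel of the linearization at the bubble has dimension $n+1$ (spanned by $Z_0,\dots,Z_n$), so the projected equation produces $n+1$ Lagrange-multiplier coefficients to annihilate; one must also vary the concentration point (the role played by $\tau$ in the paper's reduction). This is especially delicate here because hypotheses (1) and (2) force the full Hessian of $h$ to vanish at $\xi$ (a maximum point with $\Delta_g h(\xi)=0$ has negative semi-definite Hessian with zero trace, hence zero Hessian), so the reduced energy is degenerate in the translation directions to leading order, and locating and verifying a critical point there requires an additional argument your sketch does not provide.
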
 
\item{\bf The zero case, i.e. $R_g=0$.}

\noindent 
Necessary conditions for existence are that $h$ changes sign and  $\int\limits_M h d\nu_g<0.$
Some of the existence results proved by Escobar and Schoen \cite{escobar}, Aubin and Hebey \cite{ah}, and Bismuth \cite{bis} can be summarized as follows.
\begin{theorem}\label{zero}
Let $\xi\in M$ such that $h(\xi)=\max\limits_M h>0.$
If one of the following conditions hold:
\begin{enumerate}[leftmargin=*]
\item $3\leq n \leq 5$ and all the derivatives of $h$ at the point $\xi$ up to order $n-3$ vanish;
\item $(M,g)$ is locally conformally flat, $n\ge 6$  and all the derivatives of $h$ at the point $\xi$ up to order $n - 3$ vanish;
\item the Weyl's tensor at $\xi$ does not vanish, [$n=6$ and $\Delta_g h(\xi)=0$], or [$n\ge7$ and $\Delta_g h(\xi)=\Delta^2_g h(\xi)=0$];
\end{enumerate}
then \eqref{p0} has a solution
\end{theorem}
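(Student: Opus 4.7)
The plan is variational. Since $R_g=0$, problem \eqref{p0} reduces to $-\Delta_g u = h u^p$ with $p=(n+2)/(n-2)$, so I would look for positive critical points of the quotient
\[
Q_h(u) \coloneqq \frac{\int_M |\nabla_g u|^2\,d\nu_g}{\left(\int_M h\,|u|^{p+1}\,d\nu_g\right)^{2/(p+1)}}
\]
on the cone $\mathcal{B}\coloneqq\{u\in H^1_g(M)\setminus\{0\}:\int_M h|u|^{p+1}d\nu_g>0\}$, which is non-empty because $h(\xi)>0$. A minimizer, once it is shown to exist, is positive and smooth by the strong maximum principle and elliptic regularity, and produces a solution of \eqref{p0} after rescaling.

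The sole obstruction is loss of compactness at the critical Sobolev exponent. Letting $K(n)$ denote the sharp constant in the Euclidean Sobolev inequality, a standard concentration--compactness analysis guarantees precompactness of minimizing sequences as soon as
\[
\inf_{\mathcal{B}} Q_h \;<\; \Lambda^* \coloneqq \frac{1}{K(n)\,(\max_M h)^{2/(p+1)}},
\]
reducing the entire theorem to producing a test function $\varphi$ with $Q_h(\varphi)<\Lambda^*$. I would take $\varphi=\chi\,U_{\varepsilon,\xi}$, where $\chi$ is a smooth cutoff supported in a normal neighbourhood of $\xi$ and
\[
U_{\varepsilon,\xi}(x)=\frac{(n(n-2))^{(n-2)/4}\,\varepsilon^{(n-2)/2}}{(\varepsilon^2+|x|^2)^{(n-2)/2}}
\]
is the Aubin--Talenti bubble, read in conformal normal coordinates centered at $\xi$.

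The core of the argument is then the asymptotic expansion
\[
Q_h(\varphi)=\Lambda^*+A(n,g,h)\,\varepsilon^{\alpha(n)}+o(\varepsilon^{\alpha(n)})
\]
and the verification that $A(n,g,h)<0$. The correction to the numerator, obtained by comparing the Riemannian Dirichlet integral with its Euclidean counterpart, is of order $\varepsilon^{n-2}$ and controlled by the mass of the Green's function of $-\Delta_g$ at $\xi$ in the locally conformally flat regimes of cases (1)--(2), and is of order $\varepsilon^4$ with a favourable multiple of $|W_g(\xi)|^2$ in the non locally conformally flat regime of case (3). The correction to the denominator comes from Taylor-expanding $h$ at $\xi$ and produces, since $\xi$ is a maximum, a sequence of terms of the \emph{wrong} sign indexed successively by $\Delta_g h(\xi)$, $\Delta^2_g h(\xi)$, and the higher derivatives of $h$. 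The flatness hypotheses are calibrated so as to kill all these harmful terms up to the order at which the favourable geometric contribution takes over: all derivatives of $h$ up to order $n-3$ in cases (1)--(2), and only $\Delta_g h(\xi)$ (also $\Delta^2_g h(\xi)$ when $n\ge 7$) in case (3).

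The main obstacle is the bookkeeping in this expansion, since the exponent $\alpha(n)$, the identity of the dominant geometric invariant, and the highest relevant derivative of $h$ all depend sensitively on the dimension and on whether $W_g(\xi)=0$. Moreover, in the regimes (1)--(2) one must invoke an analogue of Schoen's positive mass theorem to assign the correct sign to the mass term, so that the argument there mirrors the low-dimensional and locally conformally flat resolutions of the Yamabe problem with the extra $h$-expansion grafted on top.
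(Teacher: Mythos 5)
The paper does not prove Theorem~\ref{zero}; it is recalled, without proof, as a summary of results from Escobar--Schoen~\cite{escobar}, Aubin--Hebey~\cite{ah}, and Bismuth~\cite{bis}, so there is no in-paper argument for your sketch to be compared against.

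As a reconstruction of how those references proceed, your outline captures the correct architecture: minimize a Sobolev-type quotient, exhibit a test function that pushes the infimum strictly below the noncompactness threshold, and use the flatness hypotheses on $h$ to suppress the unfavourable Taylor corrections at the maximum so that the favourable geometric contribution (Weyl tensor when it is nonzero; mass of the Green's function plus the positive mass theorem otherwise) dictates the sign. Two ingredients, however, are underspecified and would need to be supplied to make this a proof. First, you never invoke the standing necessary conditions recorded in the paper immediately before the statement, namely that $h$ changes sign and $\int_M h\,d\nu_g<0$. These are not decorative: if, say, $\int_M h\,d\nu_g>0$, then nonzero constants lie in your set $\mathcal B$, so $\inf_{\mathcal B}Q_h=0$, the constrained minimizer is a constant, and rescaling a minimizer cannot yield a positive solution of~\eqref{p0}. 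The positivity and attainment of the infimum both require $\int_M h\,d\nu_g<0$ together with $\max_M h>0$. Second, the explicit test function $\varphi=\chi\,U_{\varepsilon,\xi}$ you write down is only the adequate one in case~(3). In cases~(1)--(2) one must use Schoen's construction, gluing the bubble to a multiple of the Green's function of $-\Delta_g$ at $\xi$; only then does the mass enter the expansion at the order $\varepsilon^{n-2}$ you quote, with the positive mass theorem supplying the favourable sign. You mention both the mass and the positive mass theorem in prose, but the formula for $\varphi$ you propose does not implement the gluing needed to see them.
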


\item {\bf The positive case, i.e. $R_g>0$.}

\noindent
A necessary condition for existence is that $\max_M h>0.$  
Some of the existence results proved by Escobar and Schoen \cite{escobar}, Aubin and Hebey \cite{ah}, and Hebey and Vaugon \cite{hv2}  can be summarized as follows.
\begin{theorem}\label{zero+}
Assume that $(M,g)$ is not conformal to the standard sphere $(\mathbb S^n, g_0)$.
Let $\xi\in M$ such that $h(\xi)=\max\limits_M h>0.$
If one of the following conditions hold:
\begin{enumerate}[leftmargin=*]
\item $n=3$ or [$n\geq 4$, $(M,g)$ is locally conformally flat, and all the derivatives of $h$ at the point $\xi$ up to order $n -2$ vanish];
\item the Weyl's tensor at $\xi$ does not vanish, [$n=6$ and $\Delta_g h(\xi)=0$], or [$n\ge7$ and $\Delta_g h(\xi)=\Delta^2_g h(\xi)=0$];
\end{enumerate}
then \eqref{p0} has a solution.
\end{theorem}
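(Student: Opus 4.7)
The plan is to obtain a solution to \eqref{p0} as a positive minimizer of the constrained variational problem
\begin{equation*}
Y_h \coloneqq \inf\left\{J_g(u)\ :\ u\in H^1_g(M),\ \int_M h\,|u|^{p+1}\,d\nu_g=1\right\},
\end{equation*}
where $J_g(u)\coloneqq \int_M(|\nabla_g u|^2+c(n)R_g u^2)\,d\nu_g$. Since $R_g>0$, the quadratic form $J_g$ is equivalent to the full $H^1_g(M)$ norm; and since $\max_M h>0$, concentrating a test function at $\xi$ shows that the constraint set is nonempty. So $Y_h$ is a finite positive number. A standard Lagrange multiplier argument combined with the maximum principle (applied to $|u|$) then converts a minimizer, after a harmless rescaling, into a positive solution of \eqref{p0}.

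The analytic obstruction is the lack of compactness at the critical Sobolev exponent $p+1=\frac{2n}{n-2}$. The Aubin--Brezis--Nirenberg type threshold asserts that any minimizing sequence is relatively compact in $H^1_g(M)$ provided
\begin{equation*}
Y_h<\frac{1}{K(n,2)^2\,(\max_M h)^{2/(p+1)}},
\end{equation*}
where $K(n,2)$ denotes the sharp Euclidean Sobolev constant. The proof is a concentration--compactness argument: if compactness fails, mass concentrates at a single point and produces a bubble whose contribution to the Rayleigh quotient equals the right-hand side above, the equality case being ruled out precisely by the hypothesis that $(M,g)$ is not conformal to the round sphere.

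It then remains to exhibit a test function $u_\varepsilon$ that pushes $Y_h$ strictly below this threshold. The natural candidates are cut-off standard bubbles
\begin{equation*}
u_\varepsilon(x)=\chi(x)\left(\frac{\varepsilon}{\varepsilon^2+|\exp_\xi^{-1}(x)|^2}\right)^{\frac{n-2}{2}},
\end{equation*}
written in conformal normal coordinates centred at $\xi$. In Case (1), the local conformal flatness (or the low dimension $n=3$) reduces the geometric correction in $J_g(u_\varepsilon)$ to a negligible order, while the hypothesis that all derivatives of $h$ at $\xi$ up to order $n-2$ vanish ensures that $\int_M h\,u_\varepsilon^{p+1}\,d\nu_g$ agrees with $h(\xi)\int u_\varepsilon^{p+1}\,d\nu_g$ sharply enough to produce the desired strict inequality. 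In Case (2), the metric is no longer essentially flat near $\xi$, so one follows Schoen's strategy and corrects $u_\varepsilon$ by a suitable multiple of the Green's function of the conformal Laplacian; the non-vanishing Weyl tensor then contributes a strictly negative term of order $\varepsilon^{4}$ (with a logarithmic factor when $n=6$), and the dimension-dependent hypotheses $\Delta_g h(\xi)=0$ (for $n=6$) or $\Delta_g h(\xi)=\Delta_g^2 h(\xi)=0$ (for $n\geq 7$) are designed precisely to kill those terms in the Taylor expansion of $\int_M h\,u_\varepsilon^{p+1}\,d\nu_g$ that would otherwise dominate this geometric gain.

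The main obstacle I expect is the bookkeeping of the test-function expansion in Case (2) for $n\in\{6,7\}$, where the Weyl contribution, the flatness-controlled correction of the $h$-integral, and the residual geometric remainder are all of comparable order; conformal normal coordinates must be exploited carefully to eliminate spurious terms so that the negative Weyl contribution indeed wins.
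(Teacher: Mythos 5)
This statement is one of the background results (Theorems \ref{zero-}--\ref{zero+}) that the paper quotes from the literature (Escobar--Schoen, Aubin--Hebey, Hebey--Vaugon) without proving it, so there is no ``paper's own proof'' to compare against; the comparison below is with the argument in those references.

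Your overall architecture---minimize $J_g$ on the constraint $\int_M h|u|^{p+1}\,d\nu_g=1$, prove that a minimizing sequence is relatively compact as soon as $Y_h<K(n,2)^{-2}(\max_M h)^{-2/(p+1)}$, and exhibit a test function placing $Y_h$ strictly below that level---is correct and is exactly what the cited papers do. The gap is in your account of where the strict inequality comes from in Case (1), and it is not a bookkeeping issue. For a \emph{cut-off} bubble $\chi\cdot u_\varepsilon$ the truncation produces an $O(\varepsilon^{n-2})$ contribution to $J_g(\chi u_\varepsilon)$ whose sign is not under control and is in general unfavorable, so a cut-off bubble alone cannot beat the threshold. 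The flatness hypothesis on $h$ (vanishing of all derivatives up to order $n-2$ at $\xi$) only ensures that $\int_M h\,u_\varepsilon^{p+1}\,d\nu_g-h(\xi)\int_M u_\varepsilon^{p+1}\,d\nu_g=o(\varepsilon^{n-2})$, i.e., that $h$ does not \emph{spoil} the estimate; it does not \emph{create} a strict gap. The strictly negative $O(\varepsilon^{n-2})$ term in Case (1) is the mass in the expansion of the Green's function $G_g(\cdot,\xi)$ of the conformal Laplacian, and it is captured only by Schoen's global test function (the bubble glued to a multiple of $G_g(\cdot,\xi)$ away from $\xi$); its positivity is the content of the positive mass theorem, which is precisely where the hypothesis that $(M,g)$ is not conformal to the round sphere enters. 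You attach the Green's-function correction to Case (2) instead, but there it is not needed: when $\textnormal{Weyl}_g(\xi)\neq0$ and $n\geq 6$, Aubin's \emph{local} test function, expanded in conformal normal coordinates, already yields the decisive $-c_n|\textnormal{Weyl}_g(\xi)|_g^2\,\varepsilon^4$ gain (with a $|\log\varepsilon|$ factor for $n=6$), against which the conditions $\Delta_g h(\xi)=0$, respectively $\Delta_g h(\xi)=\Delta_g^2 h(\xi)=0$, neutralize the competing $h$-corrections, as you correctly describe. In short, you have interchanged the two halves of the test-function argument; as written, Case (1) would not close.
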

The prescribed scalar curvature problem on the standard sphere has also been largely studied. We refer the interested reader to \cites{hv1,yyl2,yyl3}.
\end{itemize}

\medskip
In the rest of this paper, we focus our attention on the case $h_\lambda(x)\coloneqq\lambda^2+h(x)$ where $h\in C^2(M)$ and $\la>0$ is a small parameter. Namely, we study the problem
\beq\label{scp}
-\Delta_g u+  c(n)R_g u=(\la^2+h)u^p ,\ u>0 \quad \mbox{on }M.
\eeq
In order to state our main results, let us introduce two assumptions. In our first theorem we assume that $h$ satisfies the following \emph{global} condition:
\begin{align}\label{nd1}
\hbox{\em there exists a non-degenerate solution $u_0$ to} \nonumber\\
-\Delta _g u_0+c(n)R_g u _0=h u_0^p,\quad u_0>0 \quad \mbox{on }M.
\end{align}
The existence of a solution to \eqref{nd1} is guaranteed if $h$ is as in Theorems \ref{zero-}, \ref{zero}, or \ref{zero+}.
The non-degeneracy condition is a delicate issue and it is discussed in Subsection \ref{nondege}. {\em It would be interesting to see if for ``generic'' functions $h$ the solutions of \eqref{nd1} are non-degenerate.} 

Under this assumption, it is clear that if $\lambda$ is small enough then \eqref{scp} has a solution $u_{0,\la}\in C^2(M)$ such that $\|u_{0,\la}-u_0\|_{C^2(M)}\to0$ as $\la\to0.$

In addition, the following \emph{local} condition is assumed in both of our results:
\begin{gather}
\hbox{\em There exist a point $\xi\in M$ and some real numbers $\gamma \ge 2$, $a_1,\dots,a_n\not=0$, with $\sum\limits_{i=1}^n a_i>0$,} \nonumber\\
\hbox{\em such that, in some geodesic normal coordinate system centered at $\xi$, we have }\nonumber\\
h(y)=-\sum\limits_{i=1}^na_i|y_i|^{\gamma} +R(y) \quad \mbox{if } y\in B(0,r),\ \hbox{for some $r>0$},\label{h}\\
\hbox{where $R$ satisfies $\lim\limits_{y\to0} {R(y) |y|^{-\gamma} }=0.$}\nonumber
\end{gather}
In particular, $h(\xi)=\nabla h(\xi)=0$. The number $\gamma$ is called {\em the order of flatness} of $h$ at the point $\xi.$ 
Observe that $\xi$ cannot be a minimum point and that if all the $ a_i$'s are positive then $\xi$ is a local maximum point of $h$.

\medskip
Let us now introduce the standard $n$-dimensional bubbles, which are defined via
\begin{align*}
U_{\mu,y}(x)=\mu^{-\frac{n-2}2}U\left(\frac{x-y}\mu\right),\ \mu>0,\ y\in \rr^n,\\ 
\hbox{where}\ U(x)=\alpha_n\frac{1}{(1+|x|^2) ^{\frac{n-2}2}},\  \alpha_n=[n(n-2)]^{\frac{n-2}4}.
\end{align*}
These functions are all the positive solutions to the critical problem (see \cites{Aubin1,Talenti}) 
$$
-\Delta U= U^p\quad \mbox{in }\rr^n.
$$
Our first result concerns the multiplicity of solutions to problem \eqref{scp}.
\begin{theorem}\label{main0}
Assume that $(M,g)$ is not conformal to the standard sphere $(\mathbb S^n, g_0)$, \eqref{nd1}, and \eqref{h}. If one of the following conditions hold:
\begin{enumerate}[leftmargin=*]
\item $3\le n\le 5$ and $\xi$ is a non-degenerate critical point of $h$, i.e. $\gamma=2$; 
\item $n=6$ and $\gamma\in (2,4)$; 
\item $7\le n\le9$ and $\gamma=4$;
\item $n\ge10$, $(M,g)$ is locally conformally flat , and $\gamma\in\left(\frac{n-2}2, \frac n2\right)$; 
\item[(5)] $n\ge10$, the Weyl's tensor at $\xi$ does not vanish, and $\gamma\in(4,4+\epsilon)$ for some $\epsilon>0$;
\end{enumerate}
then, provided $\lambda$ is small enough, there exists a solution $u_\la$ to problem \eqref{scp} which blows-up at the point $\xi$ as $\lambda\to0$. Moreover, as $\la \to 0$, we have
$$
\left\|u_\la(x)- u_0(x) -\la^{-\frac{n-2}2}{\mu_\la}^{-\frac{n-2}2}U\left(\frac{d_g(x,\xi_\la)}{\mu_\la}\right)\right\|_{H^1_g(M)}\to0,$$
where the concentration point $\xi_\la\to\xi$ and the concentration parameter $\mu_\la\to0$ with a suitable rate with respect to $\la$, which depends on the order of flatness $\gamma$ {\normalfont(see \eqref{mu1}, \eqref{mu2}, \eqref{mu9}, \eqref{mu3})}.

Finally, if $h\in C^\infty(M)$, then $\lambda^2+h$ is the scalar curvature of a metric conformal to $g.$
\end{theorem}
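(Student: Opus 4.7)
The plan is a Lyapunov--Schmidt finite-dimensional reduction, whose main technical content is an asymptotic expansion of the reduced functional covering the five regimes uniformly. The normalization $\lambda^{-(n-2)/2}$ in the statement is dictated by the observation that the substitution $u=\lambda^{-(n-2)/2}v$ turns the ``pure-bubble'' equation $-\Delta u=\lambda^2 u^p$ into $-\Delta v=v^p$, so that $\lambda^{-(n-2)/2}\mu^{-(n-2)/2}U(\cdot/\mu)$ is the natural approximate solution concentrated at $\xi$.

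I would look for a solution of the form
\[
u_{\mu,\xi',\phi}\coloneqq u_0+\lambda^{-\frac{n-2}{2}}PW_{\mu,\xi'}+\phi,
\]
where $PW_{\mu,\xi'}$ is the $H^1_g(M)$-projection of the bubble $\mu^{-(n-2)/2}U(d_g(\cdot,\xi')/\mu)$, read in (conformal) normal coordinates at $\xi'$ and suitably cut off, and $\phi$ is a small remainder $H^1_g$-orthogonal to the approximate kernel spanned by $\partial_\mu PW_{\mu,\xi'}$ and $\partial_{\xi'_i}PW_{\mu,\xi'}$. The non-degeneracy of $U$ modulo its translations and dilations, combined with the global non-degeneracy hypothesis \eqref{nd1} on $u_0$, makes the linearization of \eqref{scp} at $u_0+\lambda^{-(n-2)/2}PW_{\mu,\xi'}$ an isomorphism on the orthogonal complement of this kernel, with operator norm bounded uniformly in $(\mu,\xi',\lambda)$. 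Estimating the error of the ansatz in a dual Sobolev norm and applying a contraction mapping argument then produces a unique small $\phi=\phi_{\mu,\xi',\lambda}$.

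The problem reduces to finding critical points of the finite-dimensional functional $\mathcal{J}_\lambda(\mu,\xi')\coloneqq J_\lambda(u_{\mu,\xi',\phi_{\mu,\xi',\lambda}})$, where $J_\lambda$ is the Euler functional of \eqref{scp}. After subtracting $J_\lambda(u_0)$ and expanding, the leading $\mu$-dependent terms take the schematic form
\[
\mathcal{J}_\lambda(\mu,\xi')-J_\lambda(u_0)=\alpha_\lambda+\beta_\lambda\,\mu^\gamma\sum_{i=1}^n a_i\int_{\mathbb{R}^n}|y_i|^\gamma U^{p+1}\,dy+\delta_\lambda\,\mathcal{G}(\mu,\xi')+o(\cdots),
\]
with explicit $\lambda$-dependent constants $\alpha_\lambda,\beta_\lambda,\delta_\lambda$, and $\mathcal{G}$ collecting the geometric contribution: $R_g(\xi')\,\mu^2$ in low dimensions, a Weyl-tensor term $|W_g(\xi')|^2\,\mu^4$ (with logarithmic corrections at $n=6$) when $(M,g)$ is not locally conformally flat, and a Green's function mass term in the locally conformally flat case. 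The five cases (1)--(5) correspond exactly to the regimes in which $\mu^\gamma$ balances the dominant geometric contribution, producing a strict local extremum in $(\mu,\xi')$ with $\xi_\lambda\to\xi$ and $\mu_\lambda\to 0$ at the rates prescribed by \eqref{mu1}--\eqref{mu3}. Positivity of $u_\lambda$ holds automatically from the ansatz for $\lambda$ small, and elliptic bootstrap on \eqref{scp} upgrades regularity when $h\in C^\infty(M)$.

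The main obstacle is precisely this asymptotic expansion. One must identify which of the competing terms (flatness of $h$, scalar curvature, Weyl tensor, conformal normal coordinate corrections, Green's function mass, and cross-interactions between $u_0$ and the bubble) is leading in each dimension and each range of $\gamma$, and compute its coefficient with enough precision to solve the reduced variational problem. The borderline dimensions ($n=6$) and exponents ($\gamma=4$) produce logarithmic terms that demand extra care, and in the non-conformally-flat case (5) conformal normal coordinates are needed to cancel otherwise divergent lower-order curvature contributions so that $|W_g(\xi)|^2$ emerges as the leading geometric obstruction.
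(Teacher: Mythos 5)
Your overall strategy (Lyapunov--Schmidt reduction, projected bubble around $\xi$, contraction mapping for the remainder, then critical points of a reduced finite-dimensional functional) matches the paper's. However there is a genuine gap in your identification of the leading competing term in the reduced energy, which is the heart of the proof.

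You write the reduced functional as $\mathcal{J}_\lambda-J_\lambda(u_0)=\alpha_\lambda+\beta_\lambda\,\mu^\gamma\sum_i a_i\int|y_i|^\gamma U^{p+1}+\delta_\lambda\,\mathcal{G}(\mu,\xi')+o(\cdots)$, with $\mathcal{G}$ built from $R_g(\xi')\mu^2$ in low dimensions, $|W_g(\xi')|^2\mu^4$ in the non-l.c.f.\ case, and a Green's-function mass term in the l.c.f.\ case. This is the balance structure for the \emph{pure-bubble} problem (i.e.\ Theorem~\ref{main1}, where $u_0\equiv0$), not for Theorem~\ref{main0}. When $u_0\not\equiv0$, the dominant positive term which must be balanced against the $h$-flatness contribution is the \emph{bubble--$u_0$ interaction}
\[
\lambda^{-(n-2)}\int_M \W_{t,\tau}^{p}\,\big(\lambda^{\frac{n-2}{2}}u_0\big)\,d\nu_g
\;=\;u_0(\xi)\,\frac{\mu^{\frac{n-2}{2}}}{\lambda^{\frac{n-2}{2}}}\,\alpha_n^{p}\!\int_{\rr^n}\!\frac{dy}{(1+|y|^2)^{\frac{n+2}{2}}}+o\Big(\tfrac{\mu^{\frac{n-2}{2}}}{\lambda^{\frac{n-2}{2}}}\Big),
\]
which gives the $t^{\frac{n-2}{2}}$ term in $\Theta_2$ in \eqref{rido2}. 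In the regimes (1)--(4) of the theorem this interaction \emph{dominates} both $R_g\mu^2$ and the Green's-function mass, and the choice of $\beta$ in $\mu=t\lambda^\beta$ (namely $\beta=\frac{n+2}{2\alpha-n+6}$) is made precisely so that $\mu^{(n-2)/2}/\lambda^{(n-2)/2}\sim\mu^{2+\alpha}/\lambda^n$. Only in case (5), $n\ge10$ with nonvanishing Weyl tensor, does the $|W_g(\xi)|^2 t^4$ contribution win over the interaction term, and then $\beta=\frac{2}{\alpha-2}$ balances it against the $h$-term, as in \eqref{rido1}. You mention ``cross-interactions between $u_0$ and the bubble'' only as one of several candidates, but your schematic expansion drops it into the error; as written, the reduced functional would have no nontrivial critical point in cases (1)--(4) since the two terms you keep do not balance at the chosen rate.

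A second, smaller gap: ``positivity of $u_\lambda$ holds automatically from the ansatz for $\lambda$ small'' is only clear when $R_g>0$ and the maximum principle applies. In the zero and negative cases (which Theorem~\ref{main0} covers) the paper needs a genuine argument (Lemma~\ref{positivo}): one tests equation~\eqref{scp1} against $(u_\lambda-\lambda)^-$ on the small set $\Omega_\lambda=\{u_\lambda<\lambda\}$ and uses a Poincar\'e inequality with constant blowing up as $|\Omega_\lambda|\to0$ to beat the sign-indefinite terms. This is worth flagging because the perturbation $\phi_\lambda$ is only controlled in $H^1_g$ and a priori nothing prevents $u_\lambda$ from dipping below zero where $u_0$ is small.

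Finally, a minor point: you parametrize by $(\mu,\xi')$, whereas the paper fixes $\mu=t\lambda^\beta$ and parametrizes by $(t,\tau)$ with $\tau$ a rescaled shift of the concentration point. These are equivalent, but the paper's normalization makes the competing terms emerge at the \emph{same} $\lambda$-power with $t$-dependent coefficients, which is what lets one identify a non-degenerate critical point of the limit functional $\Theta(t,\tau)=t^\beta-t^\gamma\sum_i a_i\int|y_i+\tau_i|^\gamma f$ (with $\beta=4$ or $\beta=\tfrac{n-2}{2}$). If you work directly with $\mu$ you would have to extract this rescaling anyway.
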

This is the first multiplicity result in the zero and positive cases. In the negative case, it extends the results of Theorem \ref{teo-rau2} to locally conformally flat manifolds, to low-dimensional manifolds (i.e. $3\le n\le5$), to higher-dimensional manifolds (i.e. $6\le n\le9$) when the order of flatness at the maximum point $\xi$ is at least 2, and to non-locally conformally manifolds when $n\geq 10$ and the order of flatness at the maximum point $\xi$ is at least $4$. Moreover, it also provides an accurate description of the profile of the solution as $\la$ approaches zero.

\medskip
Our second result concerns the existence of solutions to problem \eqref{scp} in the positive case, without need of the global condition \eqref{nd1} on $h$. 
\begin{theorem}\label{main1}
Assume that $(M,g)$ is not conformal to the standard sphere $(\mathbb S^n, g_0)$, $R_g>0$, and \eqref{h}. If one of the following conditions hold:
\begin{enumerate}[leftmargin=*]
\item $n=3,4,5$ or [$n\ge6$ and $(M,g)$ is locally conformally flat] and $\gamma\in(n-2,n)$;
\item $n\ge 6$, the Weyl's tensor at $\xi$ does not vanish, and $\gamma\in(4,n)$;
\end{enumerate}
then, provided $\lambda$ is small enough, there exists a solution $u_\la$ to problem \eqref{scp} which blows-up at the point $\xi$ as $\lambda\to0$. Moreover, as $\la \to 0$, we have
$$
\left\|u_\la(x)-\la^{-\frac{n-2}2}{\mu_\la}^{-\frac{n-2}2}U\left(\frac{d_g(x,\xi_\la)}{\mu_\la}\right)\right\|_{H^1_g(M)}\to0,$$
where the concentration point $\xi_\la\to\xi$ and the concentration parameter $\mu_\la\to0$  with a suitable rate with respect to $\la$, which depends on the order of flatness $\gamma$  {\normalfont(see \eqref{Eq8})}.

Finally, if $h\in C^\infty(M)$, then $\lambda^2+h$ is the scalar curvature of a metric conformal to $g.$
\end{theorem}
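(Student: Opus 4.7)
The plan is to carry out a Lyapunov--Schmidt finite-dimensional reduction, in the same spirit as Theorem \ref{main0} but with no background solution $u_0$. I would seek a solution of \eqref{scp} in the form
$$
u_\la=\la^{-\frac{n-2}{2}}\bigl(PW_{\mu,\xi_0}+\phi\bigr),
$$
where $W_{\mu,\xi_0}(x)=\mu^{-\frac{n-2}{2}}U\bigl(\exp_{\xi_0}^{-1}(x)/\mu\bigr)$ is the standard bubble transplanted to a small geodesic ball around $\xi_0\in M$, $P$ denotes the projection onto $H^1_g(M)$ with respect to the conformal Laplacian $-\Delta_g+c(n)R_g$, and $\phi$ is a small remainder lying in the orthogonal complement of the approximate kernel $K_{\mu,\xi_0}=\mathrm{span}\{\partial_\mu PW_{\mu,\xi_0},\partial_{(\xi_0)_1}PW_{\mu,\xi_0},\ldots,\partial_{(\xi_0)_n}PW_{\mu,\xi_0}\}$. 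The prefactor $\la^{-(n-2)/2}$ is dictated by the fact that $\la^2+h$ degenerates to $\la^2$ at the blow-up point, so that after the rescaling $v:=\la^{(n-2)/2}u_\la$ the equation becomes $-\Delta_g v+c(n)R_g v=(1+\la^{-2}h)v^p$, whose coefficient equals $1$ at $\xi$ and fixes the size of the bubble. The parameters $(\mu,\xi_0)$ are to be chosen as functions of $\la$.

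Projecting the equation onto $K_{\mu,\xi_0}^\perp$ yields a nonlinear equation for $\phi$. Since $R_g>0$, the conformal Laplacian is coercive on $H^1_g(M)$, and the non-degeneracy of $U$ in $\rr^n$ (combined with standard blow-up analysis) provides a uniform coercivity estimate for the linearization at $PW_{\mu,\xi_0}$ on $K_{\mu,\xi_0}^\perp$. A contraction mapping then produces, for $\la$ small, a unique small remainder $\phi=\phi_{\la,\mu,\xi_0}$ whose $H^1_g(M)$ norm is controlled by
$$
\mathcal E_{\la,\mu,\xi_0}:=-\Delta_g PW_{\mu,\xi_0}+c(n)R_g PW_{\mu,\xi_0}-(\la^2+h)(PW_{\mu,\xi_0})^p,
$$
estimated in $L^{(p+1)/p}(M)$ using the flatness assumption \eqref{h} and standard integrals against $W_{\mu,\xi_0}$.

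The remaining finite-dimensional problem is to find critical points $(\mu_\la,\xi_\la)$ of the reduced functional
$$
J_\la(\mu,\xi_0):=I_\la\bigl(\la^{-\frac{n-2}2}(PW_{\mu,\xi_0}+\phi_{\la,\mu,\xi_0})\bigr),
$$
where $I_\la$ is the Euler--Lagrange energy of \eqref{scp}. The key step is a $C^1$-expansion of the form
$$
J_\la(\mu,\xi_0)=A_0+\alpha_1\,\Gamma_{\mathrm{geom}}(\mu,\xi_0)+\alpha_2\,\la^{-2}\mu^\gamma\,\Psi(\xi_0)+o(\text{leading}),
$$
in which $\Psi(\xi_0)$ comes from the Euclidean integrals $\int_{\rr^n}|y_i|^\gamma U^{p+1}\,dy$ weighted by the coefficients $a_i$, and the geometric term $\Gamma_{\mathrm{geom}}(\mu,\xi_0)$ is of order $\mu^{n-2}$ times the mass of the Green's function of $-\Delta_g+c(n)R_g$ at $\xi_0$ in case (1), and of order $\mu^4|W(\xi_0)|^2$ in case (2). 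Both geometric contributions have a definite sign: the mass is strictly positive by the positive mass theorem, since $R_g>0$ and $(M,g)$ is not conformal to the standard sphere, and the Weyl term is manifestly non-negative. The ranges $\gamma\in(n-2,n)$ in case (1) and $\gamma\in(4,n)$ in case (2) are precisely those in which the flatness term $\la^{-2}\mu^\gamma$ is subleading with respect to $\mu^{n-2}$, respectively $\mu^4$, in the regime of interest, and in which the signs $\alpha_1$, $\alpha_2$ are opposite, producing a balance point $\mu_\la$ of the scale prescribed by \eqref{Eq8}; meanwhile the condition $a_i\neq 0$ ensures that $\xi$ is a non-degenerate critical point of the $\xi_0$-dependent leading profile.

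The main obstacle will be obtaining the sharp $C^1$ expansion of $J_\la$ uniformly in $(\mu,\xi_0)$ throughout the concentration regime. This requires precise pointwise estimates on $PW_{\mu,\xi_0}-W_{\mu,\xi_0}$ via the Cartan--G\"unther expansion of the Green's function in normal coordinates; a careful splitting of $\int_M h\,(PW_{\mu,\xi_0})^{p+1}\,d\nu_g$ into a near region, where \eqref{h} controls $h$, and a far region, where the decay of the bubble dominates; and a higher-order expansion of the metric to extract the Weyl contribution in case (2). Once this expansion is in place, the existence of a stable critical point of the leading part of $J_\la$ gives $(\mu_\la,\xi_\la)$ via the implicit function theorem or a Brouwer degree argument; positivity of $u_\la$ follows from the maximum principle since $u_\la$ is a small $H^1_g(M)$ perturbation of a positive profile whose $L^\infty$ norm diverges; and elliptic regularity finally upgrades $u_\la$ to $C^\infty$ when $h\in C^\infty(M)$, so that $u_\la^{4/(n-2)}g$ is a smooth conformal metric with scalar curvature $\la^2+h$.
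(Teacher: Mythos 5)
Your proposal is correct and follows essentially the same route as the paper: a Lyapunov--Schmidt finite-dimensional reduction around a concentrating bubble, with the rate $\mu_\la$ fixed by balancing a strictly positive geometric contribution (the mass $\mathfrak m(\xi)$ of the Green's function of $\mathcal L_g$, or $|\mathrm{Weyl}_g(\xi)|_g^2$) against the flatness term coming from $h$, followed by a $C^1$-expansion of the reduced energy and a non-degeneracy argument in $(\mu,\xi_0)$; the paper, following Esposito--Pistoia--V\'etois, writes the ansatz as $G_g(\cdot,\xi)\widehat{\mathcal W}_{t,\tau}$ rather than the projected bubble $PW_{\mu,\xi_0}$, but the two are interchangeable for the purposes of the energy expansion. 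One slip to flag: at the scale given by \eqref{Eq8} the flatness term $\la^{-2}\mu^\gamma$ is not \emph{subleading} relative to $\mu^{n-2}$ (resp.\ $\mu^4$); the two are of exactly the same order, and that equality is what determines $\mu_\la$ --- your subsequent phrase ``producing a balance point'' is the accurate statement, so this is wording rather than a gap. You also do not mention that the borderline case $n=6$, $\mathrm{Weyl}_g(\xi)\neq 0$ introduces a logarithmic correction $\ell(\mu)=-\mu^{2-\alpha}\ln\mu$ into the rate, a detail the same scheme accommodates.
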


Our results have been inspired by the recent papers by Borer, Galimberti, and Struwe \cite{bgs} and del Pino and Rom\'an \cite{pinrom}, where the authors studied the prescribed Gauss curvature problem on a surface of dimension $2$ in the negative case. In particular, they built large conformal metrics with prescribed
Gauss curvature $\kappa$, which exhibit a bubbling behavior around maximum points of $\kappa$ at zero level. We also refer the reader to \cites{dp2,dp3}, where equations closely related to \eqref{p0} have been treated.

The proof of our results relies on a Lyapunov-Schmidt procedure (see for instance \cites{blr,dp1}). To prove Theorem \ref{main0}, we look for solutions to \eqref{scp} which share a suitable bubbling profile close to the point $\xi$ and the profile of the solution to the unperturbed problem \eqref{nd1} far from the point $\xi$. The accurate description of the ansatz is given in Section \ref{sec1}, which also contains a non-degeneracy result. 
The finite dimensional reduction is performed in Section \ref{sec2}, which also includes the proof of Theorem \ref{main0}. All the technical estimates are postponed in the Appendix.
In Section \ref{sec3} we prove Theorem \ref{main1}, which can be easily deduced by combining the results
proved in Section \ref{sec2} and in the Appendix with some recent results obtained by Esposito, Pistoia, and V\'etois in \cite{espive1}.

\subsection*{Acknowledgments}
We would like to thank Manuel del Pino for bringing this problem to our attention, as well as for many helpful discussions we had with him. We also thank the anonymous referee for his or her careful reading and very useful comments.

\section{The approximated solution}\label{sec1}
\subsection{The ansatz}
To build the approximated solution close to the point $\xi$ we use some ideas introduced in \cites{espi,rove}. The main order of the approximated solution close to the point $\xi$ looks like the bubble
\beq\label{ula}
{\la^{-\frac{n-2}{2}}\mathcal U_{t,\tau}}(x)\coloneqq\la^{-\frac{n-2}{2}}\mu^{-\frac{n-2}{2}}U\(\frac{ \exp_\xi^{-1}\(x\)}{\mu}-\tau\)\quad \hbox{if }d_g(x,\xi)\le r,
\eeq
where the point $\tau\in \rr^n $  depends on $\lambda$ and the parameter $\mu=\mu_\la(t)$  satisfies
\beq \label{parameters}
\mu=t\lambda^{\beta }\quad \hbox{for some}\ t>0 \ \hbox{and } \beta >1.
\eeq
The choice of $\beta$ depends on $n$, on the geometry of the manifold at the point $\xi$, i.e. the Weyl's tensor at $\xi$ and on the order of flatness of the function $h$ at $\xi$, i.e.  the number $\gamma\coloneqq2+\alpha$ in \eqref{h}.

Let us be more precise. Let $r\in(0,i_g(M))$ be fixed, where $i_g(M)$ is the injectivity radius of $(M,g)$, which is strictly positive since the manifold is compact. Let $\chi\in C^\infty(\mathbb R)$ be a cut-off function such that $0\le\chi\le 1$ in $\mathbb R$, $\chi =1$ in $[-r/2,r/2]$ and $\chi =0$ in $\mathbb R\setminus (-r,r)$.
We denote by $d_g$ the geodesic distance in $(M,g)$ and by $\exp_\xi^{-1}$ the associated geodesic coordinate system. 
We look for solutions of \eqref{scp} of the form
\beq\label{ansatz}
u_\la(x)=u_0(x)+\la^{-\frac{n-2}{2}}\W_{t,\tau}(x)+\phi_\la(x),	
\eeq
where the definition of the blowing-up term $\W_{t,\tau}$ depends on the dimension of the manifold and also on its geometric properties. The higher order term $\phi_\la$ belongs to a suitable space which will be introduced in the next section.
More precisely, $\W_{t,\tau}$ is defined in three different ways:

\begin{itemize}[font=\normalfont,leftmargin=*]
\item{\bf The case $n=3,4,5$.}

\noindent
It is enough to assume
\beq\label{w1}
\W_{t,\tau}(x)=\chi\(d_g (x,\xi ) \){\mathcal U_{t,\tau}}(x),	
\eeq
where ${\mathcal U_{t,\tau}}$ is defined in \eqref{ula}.
The concentration parameter $\mu$ satisfies
\beq
\label{mu1}\mu=t\lambda^\frac{n+2}{2\alpha-n+6},\quad \mbox{with } t>0\ \mbox{provided }0\le\alpha<n-2.
\eeq

\item{\bf The cases $n\ge10$ when $\textnormal{Weyl}_g(\xi)$ is non-zero and $6\le n\le 9$.}

\noindent
It is necessary to correct the bubble ${\mathcal U}_{t,\tau}$ defined in \eqref{ula} by adding a higher order term as in \cite{espi}, namely
\beq\label{w2}
\W_{t,\tau}(x)=\chi\(d_g (x,\xi ) \)\({\mathcal U_{t,\tau}}(x)+\mu^2{\mathcal V_{t,\tau}}(x)\)
\eeq
where 
$$
{\mathcal V_{t,\tau}}(x)=\mu^{-\frac{n-2}{2}}V\(\frac{ \exp_\xi^{-1}\(x\)}{\mu}-\tau\)\quad \mbox{if }d_g(x,\xi)\le r.
$$
The choice of parameter $\mu$  depends on $n$. More precisely
if $n\ge10$ and the Weyl's tensor at $\xi$ is non-zero we choose
\beq
\label{mu2}\mu=t\lambda^\frac{2}{\alpha-2},\quad
\mbox{with } t>0\ \hbox{provided}\ 2<\alpha<\frac{2n}{n-2}.
\eeq 
If $6\le n\le 9$ we choose
\beq\label{mu9}
\mu=t\lambda^\frac{n+2}{2\alpha-n+6},\quad
\mbox{with}\ t>0\ \hbox{provided}\ \frac{n-6}{2}<\alpha<\min\left\{\frac{16}{ n-2},\frac{n^2-6n+16}{2(n-2)}\right\}.
\eeq  

The function $V$ is defined as follows.
If we write $u(x)=u\({ \exp_\xi^{-1}\(x\) } \)$ for $x\in B_g(\xi,r)$ and $y=\exp_\xi \(x\)\in B(0,r)$, then a comparison between the conformal Laplacian $\L_g=-\Delta_g +c(n)R_g$ with the euclidean Laplacian shows that there is an error, which at main order looks like
\beq\label{conf-eu} 
\mathcal L_g u +\Delta u\sim  +\frac 13 \sum\limits_{a,b,i,j=1}^nR_{iab j}(\xi)y_a y_b\partial^2_{ij}u +\sum\limits_{i, l,k=1}^n \partial_l \Gamma^k_{ii}(\xi)y_l\partial_k u  +{c(n) R_g(\xi)}u.
\eeq
Here $ R_{iab j}$ denotes the Riemann curvature tensor, $\Gamma^k_{ij}$ the Christoffel's symbols and $R_g$ the scalar curvature. This easily follows by standard properties of the exponential map, which imply 
$$
-\Delta_g u = -\Delta u -(g^{ij}-\delta^{ij})\partial^2_{ij}u+g^{ij}\Gamma^k_{ij}\partial_k u, 
$$
with
$$
g^{ij}(y)=\delta^{ij}(y) -\frac 13 R_{iab j}(\xi)y_a y_b +O(|y|^3)\ \hbox{and}\ 
g^{ij}(y)\Gamma^k_{ij}(y)=\partial_l \Gamma^k_{ii}(\xi)y_l +O(|y|^2).
$$
To build our solution it shall be necessary to kill the R.H.S of \eqref{conf-eu} by adding to the bubble a higher order term $V$ whose existence has been established in \cite{espi}. To be more precise, we need to remind (see \cite{bieg}) that all the solutions to the linear problem
$$
-\Delta v=pU^{p-1}v\quad\hbox{in}\ \mathbb{R}^n,
$$
are linear combinations of the functions
\beq\label{Eq14}
Z_0\(x\)=x\cdot\nabla U(x)+\frac{n-2}{2} U(x),
\quad
Z_i\(x\)=\partial_i U(x), \ i=1,\dots,n.
\eeq
The correction term $V$ is built in the following Proposition (see  Section 2.2 in \cite{espi}).
\begin{proposition} 
There exist $\nu(\xi)\in\mathbb R$ and a function $V\in   \mathcal{D}^{1,2}(\mathbb{R}^n)$ solution to
\begin{align*} 
-\Delta  V &-f'(U) V = \\& -\sum\limits_{a,b,i,j=1}^n\frac13 R_{iabj} (\xi)y_ay_b\partial^2_{ij} U
-\sum\limits_{i, l,k=1}^n \partial_l\Gamma^k_{ii}(\xi)y_l\partial_kU -c(n)R_g(\xi) U+\nu(\xi)Z_0,
\end{align*}
in $\rr^n$, with
$$\displaystyle\int\limits_{\mathbb{R}^n}V(y)Z^i(y)dy=0,\quad  i=0,1,\dots,n $$
and
$$
|V(y)|+ |y|\left|\partial_kV(y)\right|+|y|^2 \left|\partial^2_{ij}V (y)\right| =O\(\frac{1}{ (1+|y|^2)^\frac{n-4}{2}}\),\quad y\in\mathbb{R}^n. 
$$
\end{proposition}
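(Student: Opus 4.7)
The plan is a Fredholm alternative argument for the linearized critical operator on $\mathcal{D}^{1,2}(\mathbb R^n)$, following the construction in \cite{espi}. Introduce
\[
L_0 \coloneqq -\Delta - f'(U) = -\Delta - pU^{p-1},
\]
which is a bounded self-adjoint Fredholm operator of index zero on $\mathcal{D}^{1,2}(\mathbb R^n)$; by the classification recalled in \cite{bieg}, $\ker L_0 = \operatorname{span}\{Z_0, Z_1, \dots, Z_n\}$ with the $Z_i$ given by \eqref{Eq14}. Denoting by $F(y)$ the first three terms on the right-hand side of the equation for $V$, I would look for $V \in (\ker L_0)^\perp$ and a scalar $\nu(\xi) \in \mathbb R$ solving $L_0 V = F + \nu(\xi) Z_0$, whose solvability is governed by the orthogonality conditions $\int_{\mathbb R^n}(F+\nu Z_0) Z_i\, dy = 0$ for $i = 0, 1, \dots, n$.

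The verification of these solvability conditions is the heart of the argument, and splits into two parts. For $i = 1, \dots, n$, orthogonality is automatic by parity: since $U$ is radial, $\partial_{ij}^2 U$ is even and $\partial_k U$ is odd under $y\mapsto -y$; consequently each of the three summands in $F$ (the products $y_a y_b \partial^2_{ij} U$, $y_l \partial_k U$, and $U$ itself) is even, as is $Z_0$, while $Z_i = \partial_i U$ is odd. For $i = 0$, the free parameter $\nu(\xi)$ is chosen precisely to absorb the projection onto $Z_0$:
\[
\nu(\xi) \coloneqq -\frac{\int_{\mathbb R^n} F\, Z_0\, dy}{\int_{\mathbb R^n} |Z_0|^2\, dy},
\]
which is well-defined because each summand in $F$ decays at least like $|y|^{2-n}$ at infinity (as does $Z_0$), so that $F Z_0 \sim |y|^{4-2n}$ is integrable for $n \ge 5$. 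Fredholm alternative then produces a unique $V \in (\ker L_0)^\perp \subset \mathcal{D}^{1,2}(\mathbb R^n)$.

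Finally, for the pointwise decay I would exploit that the full right-hand side decays like $|y|^{2-n}$ at infinity, while the perturbation $f'(U) = pU^{p-1}$ decays like $|y|^{-4}$, so the equation asymptotically resembles $-\Delta V = O(|y|^{2-n})$. Comparing $V$ against the barrier $w(y) = C(1+|y|^2)^{-(n-4)/2}$, which satisfies $-\Delta w \sim |y|^{2-n}$ at infinity, and running a maximum-principle argument on $\mathbb R^n \setminus B_R$ for $R$ large, one obtains $|V(y)| \lesssim (1+|y|^2)^{-(n-4)/2}$. The gradient and Hessian bounds then follow by the standard rescaling $V_R(z) \coloneqq R^{(n-4)/2} V(Rz)$ on unit annuli together with interior Schauder estimates. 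The main technical obstacle is making the barrier-and-maximum-principle step genuinely rigorous in the presence of the nontrivial kernel of $L_0$; this is handled cleanly in \cite{espi} by keeping $V$ in $(\ker L_0)^\perp$ throughout, and I would simply appeal to that construction.
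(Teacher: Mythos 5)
The paper itself does not prove this proposition: it is quoted from Section~2.2 of \cite{espi}, and the ``proof'' in the text is simply that citation. Your sketch is a faithful reconstruction of the Fredholm argument carried out there. The kernel of $L_0=-\Delta-pU^{p-1}$ in $\mathcal D^{1,2}(\mathbb R^n)$ is $\operatorname{span}\{Z_0,\dots,Z_n\}$ by \cite{bieg}, and the solvability conditions are handled exactly as you say: each summand of $F$ is even under $y\mapsto -y$ (a product of two even or two odd factors), $Z_0$ is even, and $Z_i=\partial_iU$ is odd, so $\int F Z_i=\int Z_0Z_i=0$ for $i=1,\dots,n$ automatically, while $\nu(\xi)=-\int FZ_0\,/\int Z_0^2$ kills the $Z_0$--projection. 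The barrier comparison and rescaled Schauder estimates for the pointwise decay are likewise standard and correct.

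One point you should not wave away, because this paper does apply the proposition for $n=6$: the bare Fredholm alternative on $\mathcal D^{1,2}(\mathbb R^n)$ requires $\phi\mapsto\int F\phi\,dy$ to be bounded, i.e.\ morally $F\in L^{2n/(n+2)}(\mathbb R^n)$. Since $F(y)\sim|y|^{2-n}$ at infinity, $F\in L^{2n/(n+2)}$ only for $n\ge7$; for $n=6$ one has $F\sim|y|^{-4}\notin L^{3/2}(\mathbb R^6)$, and the functional setting needs to be adjusted (weighted spaces, or exploiting the divergence structure of $F$). Relatedly, the asserted decay $|V|\lesssim(1+|y|^2)^{-(n-4)/2}$ gives $|\nabla V|\sim|y|^{3-n}$, and $\int|\nabla V|^2$ is logarithmically divergent at $n=6$ for this rate, so the decay claim and the membership $V\in\mathcal D^{1,2}(\mathbb R^6)$ must be reconciled at the endpoint (in \cite{espi} and in the expansions later in this paper the $n=6$ case consistently picks up a logarithmic factor, which is the signature of this borderline). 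Your proposal would be complete if you flagged that $n=6$ needs extra care beyond the statement of the Fredholm alternative rather than implicitly folding it into the $n\ge7$ discussion.
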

 
\item {\bf The case $n\ge10$ when $(M,g)$ is locally conformally flat}.

\noindent 
In this case it is necessary to perform a conformal change of metric as in \cite{rove}. Indeed, there exists a function $\Lambda_\xi\in C^\infty(M)$ such that the conformal metric $g_\xi=\Lambda_\xi^\frac{4}{(n-2)}g$ is flat in $B_g(\xi,r)$. The metric can be chosen so that $\Lambda_\xi(\xi)=1$. Then, we choose
\beq\label{w3}
\W_{t,\tau}(x)=\chi\(d_{g_\xi} (x,\xi ) \)\Lambda_\xi(x){\mathcal U_{t,\tau}}(x),	
\eeq
where ${\mathcal U_{t,\tau}}$ is defined in \eqref{ula} and the exponential map is taken with respect to the new metric $g_\xi$. In this case, the concentration parameter $\mu$ satisfies 
\beq\label{mu3}
\mu=t\lambda^\frac{n+2}{2\alpha-n+6},\quad
\mbox{with } t>0\ \hbox{provided}\ \frac{n-6}{2}<\alpha<\frac{n^2-6n+16}{2(n-2)}.
\eeq  

\end{itemize}

\subsection{The higher order term}
Let us consider the Sobolev space $H_g^1(M)$ equipped with the scalar product  
$$
(u,v)=\int_M\(\< \nabla_g u,\nabla_g v\>_g+uv\)d\nu_g,
$$
and let $\|\cdot\|$ be the induced norm.
Let us introduce the space where the higher order term $\phi_\la$ in \eqref{ansatz} belongs to. Let $Z_0,Z_1,\dots, Z_n$ be the functions introduced in \eqref{Eq14}.
We define
$$
Z_{i,t,\tau}(x) =\mu^{-\frac{n-2}2} \chi (d_{g_\xi}(x,\xi))\Lambda_\xi(x)Z_i\left(\frac{\exp_\xi^{-1}(x)}{\mu}-\tau \right)\quad i=0,1,\dots,n,
$$
where $g_\xi$ and $\Lambda_\xi$ are defined as in \eqref{w3}   and we also agree that $g_\xi\equiv g$, $\Lambda_\xi(x)\equiv 1$ if the ansatz is \eqref{w1} or \eqref{w2}.
Therefore, $\phi_\la\in H^\perp$ where
$$ 
H^\perp\coloneqq\left\{\phi\in H^1_g(M)\ :\ \int_M\phi Z_{i,t,\tau}d\nu_g=0\ \hbox{for any}\ i=0,1,\dots,n\right\}.
$$

\subsection{A non-degeneracy result} \label{nondege}
When the solution $u_0$ of \eqref{nd1} is a minimum point of the energy functional naturally associated with the problem, the non-degeneracy is not difficult to obtain as showed in Lemma \ref{nonde}.
In the general case $u_0$ is a critical point of the energy of a min-max type and so the non-degeneracy is a more delicate issue. As far as we know there are no results in this direction.

\begin{lemma}\label{nonde} Assume $R_g<0,$ $\max_M h=0$, and the set $\{x\in M\ |\ h(x)=0\}$ has empty interior set. Then the unique solution 
$u_0$   to \eqref{nd1}  is non-degenerate, i.e. the linear problem
$$
-\Delta_g \psi + c(n)R_g \psi-ph(x)u_0^{p-1}\psi=0 \quad \mbox{on }M, 
$$
admits only the trivial solution.
\end{lemma}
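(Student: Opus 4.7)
The plan is to exploit the sign conditions via a Picone-type identity. Write any solution $\psi$ of the linearized equation as $\psi = u_0 w$, which is legitimate since $u_0>0$ on $M$. A direct computation (the Picone identity) gives pointwise
\[
|\nabla_g \psi|^2 - \nabla_g\!\left(\frac{\psi^2}{u_0}\right)\!\cdot\nabla_g u_0 = u_0^{\,2}\,|\nabla_g w|^2.
\]
Integrating over $M$ and using $-\Delta_g u_0 = -c(n)R_g\, u_0 + h\, u_0^{p}$ to handle the middle term after integration by parts yields
\[
\int_M |\nabla_g \psi|^2\, d\nu_g + c(n)R_g \int_M \psi^2\, d\nu_g - \int_M h\, u_0^{p-1} \psi^2\, d\nu_g = \int_M u_0^{\,2}\, |\nabla_g w|^2\, d\nu_g.
\]

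Next, I would test the linearized equation itself against $\psi$ to obtain
\[
\int_M |\nabla_g \psi|^2\, d\nu_g + c(n)R_g \int_M \psi^2\, d\nu_g - p\int_M h\, u_0^{p-1} \psi^2\, d\nu_g = 0,
\]
and subtract this from the previous identity. The $|\nabla_g\psi|^2$ and $R_g\psi^2$ terms cancel, leaving the key relation
\[
(p-1)\int_M h\, u_0^{p-1} \psi^2\, d\nu_g = \int_M u_0^{\,2}\, |\nabla_g w|^2\, d\nu_g.
\]
Here the right-hand side is non-negative, while $p>1$, $u_0>0$, and the hypothesis $\max_M h = 0$ (so $h\le 0$ on $M$) force the left-hand side to be non-positive. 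Hence both sides vanish.

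From $\int u_0^{\,2}|\nabla_g w|^2\, d\nu_g = 0$ and $u_0>0$, $w$ is constant, i.e.\ $\psi = c\, u_0$ for some $c\in\mathbb R$. Plugging this into the linearized equation and using the equation satisfied by $u_0$, one gets $(1-p)\,c\,h\, u_0^{p} \equiv 0$ on $M$. Since $\{x\in M : h(x)=0\}$ has empty interior, $h\not\equiv 0$ on any open set, so $h u_0^{p}$ is not identically zero; this forces $c=0$ and thus $\psi\equiv 0$. The only mild issue to watch is the justification of the integration by parts used to obtain the Picone identity at integral level on the closed manifold $M$, which is standard by elliptic regularity ($u_0, \psi\in C^2(M)$) together with $u_0>0$ uniformly bounded away from zero.
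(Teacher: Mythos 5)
Your proof is correct, and it takes a genuinely different route from the paper's. The paper leans on the (external) result of del~Pino that the unique solution $u_0$ is a global minimizer of the energy functional $J$: from positive semi-definiteness of $D^2J(u_0)$, the first eigenvalue $\lambda_1$ of the linearized operator is $\ge 0$, and $\lambda_1 = 0$ is then excluded by testing the equation and the eigenvalue problem against each other with the positive first eigenfunction $\phi_1$. Your Picone-identity argument is more self-contained: it never uses the variational characterization of $u_0$ (only that $u_0$ is a positive $C^2$ solution), and it applies directly to any element $\psi$ of the kernel rather than routing through eigenvalue theory. The structure of the conclusion is the same in both proofs --- after the subtraction one lands on $(p-1)\int_M h\, u_0^{p-1}\psi^2\, d\nu_g = 0$ (paper: $(p-1)\int_M h\, u_0^p\phi_1\, d\nu_g = 0$), and the sign hypotheses $h\le 0$, $u_0>0$ together with the empty-interior condition on $\{h=0\}$ finish the job. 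In this sense your argument is cleaner and slightly more general, since it would also give non-degeneracy for positive solutions that are not known to minimize.

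One small shortcut you could have taken: from $(p-1)\int_M h\, u_0^{p-1}\psi^2\, d\nu_g = 0$ with $h\le 0$, $u_0 > 0$, $p>1$ you get $h\psi^2 \equiv 0$ a.e., hence $\psi = 0$ on the dense open set $\{h<0\}$, and $\psi\equiv 0$ by continuity, without ever needing the right-hand side $\int_M u_0^2|\nabla_g w|^2$ of the Picone identity or the conclusion that $w$ is constant. But the route via $\psi = c\,u_0$ and the pointwise relation $c(1-p)hu_0^p\equiv 0$ is equally valid. Your concern about justifying the integration by parts is addressed correctly: $u_0\in C^2(M)$ is bounded away from zero on the closed manifold, so $\psi^2/u_0\in H^1_g(M)$ and the manipulation is legitimate.
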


\begin{proof} 
Del Pino in \cite{delpino} proved that problem \eqref{nd1} has a unique solution, which    is a   minimum point of the energy functional
$$
J(u)\coloneqq\frac{1}{2}\int_M\left(|\nabla_g u|^2_g +c(n)R_gu^2 \right) d\nu_g-\frac1{p+1}\int_M h|u|^{p+1}d\nu_g.
$$
Therefore the quadratic form
$$
D^2 J(u_0)[\phi,\phi]= \int_M\left(|\nabla_g \phi|^2_g +c(n)R_g\phi^2-phu_0^{p-1}\phi^2\right)d\nu_g,\quad \phi\in H^1_g(M)
$$
is positive definite. In particular, the problem
\beq\label{nd2}
-\Delta _g \phi_i+c(n)R_g \phi_i-p h u_0^{p-1}\phi_i=\lambda_i\phi_i \quad \mbox{on }M,
\eeq
has a non-negative first eigenvalue $\lambda_1$ with associated eigenfunction $\phi_1>0$ on $M$.

\medskip
If $\lambda_1=0$ then we test \eqref{nd1} against $\phi_1$ and \eqref{nd2} against $u_0$, we subtract and we get
$$
(p-1)\int_M hu_0^p\phi_1d\nu_g=0,
$$
which gives a contradiction because $h\not=0$ a.e. in $M$ and $u_0>0$ on $M$.
\end{proof}

\section{The finite dimensional reduction}\label{sec2}
We are going to solve problem
\beq\label{scp1}
\mathcal L_g u=\(\la^2 +h\)f(u) \quad \mbox{on }M, 
\eeq
where $\mathcal L_g$ is the conformal Laplacian and $f(u)=(u^+)^p$, $u^+(x)\coloneqq\max\{u(x),0\}$, using a Ljapunov-Schmidt procedure. We rewrite \eqref{scp1} as
\beq\label{LyapSch}
L(\phi_\la)=-E+(\la^2+h)N(\phi_\la)\quad \mbox{on }M,
\eeq
where setting
\beq\label{firstaprox}
\UU_{\la}(x)=\UU_{\la,t,\tau}(x)\coloneqq \la^{-\frac{n-2}2}\W_{t,\tau}(x)+u_0(x),
\eeq
the linear operator $L(\cdot)$ is defined by
\beq\label{elle}
L(\phi)\coloneqq\mathcal L_g\phi-(\la^2 +h)f'(\UU_\la)\phi,
\eeq
the error term is defined by
\beq\label{error}
E\coloneqq\mathcal L_g \UU_\la-(\la^2+h)f(\UU_\la)
\eeq
and the higher order term $N(\cdot)$ is defined by
$$
N(\phi)\coloneqq f\(\UU_\la+\phi\)-f\(\UU_\la\)-f'\(\UU_\la\)\phi.
$$

First of all, it is necessary to estimate the error term $E$.
\begin{proposition}\label{errore}
Let $a,b\in \rr_+$ be such that $0<a<b$ and $K$ be a compact set in $\rr^n.$ There exist positive numbers $\lambda_0,$ $C$ and $\epsilon>0$ such that for any $\lambda\in(0,\lambda_0)$, for any $t\in[a,b]$ and for any point $\tau\in K$ we have 
$$
\|E\| _{L^{\frac{2n}{n+2}}(M)}\le C \la^{\frac{2(n+2)-\alpha(n-2)}{ 2}+\epsilon}\quad \hbox{if \eqref{mu2} holds}
$$
or
$$
\|E\| _{L^{\frac{2n}{n+2}}(M)}\le C\la^{\frac{(n-2-\alpha)(n-2)}{2(2\alpha-n+6)}+\epsilon}\quad \hbox{if \eqref{mu1} or \eqref{mu9} or \eqref{mu3} hold.}
$$
\end{proposition}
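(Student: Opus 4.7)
The plan is to compute $E$ directly using the ansatz \eqref{firstaprox}, exploit the leading-order cancellations built into $\mathcal{W}_{t,\tau}$, and bound the surviving residuals in $L^{2n/(n+2)}$ term by term. I split $M$ into the inner region $B_g(\xi,r)$ (or its $g_\xi$-analogue in case \eqref{w3}) and its complement. On the exterior, the cutoff $\chi$ eventually vanishes, so $\mathcal{U}_\lambda$ reduces to $u_0$ there; the identity $\mathcal{L}_g u_0 = h u_0^p$ from \eqref{nd1} then collapses $E$ to a $-\lambda^2 u_0^p$ piece (absorbable by replacing $u_0$ with the nearby $u_{0,\lambda}$ solving \eqref{scp} to the same precision) together with $\nabla\chi$-remainders supported on the transition annulus, which involve only the polynomially decaying tail of the bubble and are therefore of much higher order.

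For the inner region I change variables to $z=\mu^{-1}\exp_\xi^{-1}(x)-\tau$, so that $\mathcal{U}_{t,\tau}$ becomes a unit-scale bubble of amplitude $A\coloneqq\lambda^{-(n-2)/2}\mu^{-(n-2)/2}$. The expansion \eqref{conf-eu} together with the analogous expansion of the volume form yields that $\mathcal{L}_g\mathcal{W}_{t,\tau}$ equals $\mu^{-(n+2)/2}U^p(z)$ plus a Riemann/Christoffel correction of relative order $\mu^2$, plus the scalar-curvature term, plus $\nabla\chi$-remainders. Binomially expanding $(u_0+\lambda^{-(n-2)/2}\mathcal{W})^p$ (with the bubble dominant) and multiplying by $\lambda^2+h$, using $h(x)=-\sum a_i|\mu(z_i+\tau_i)|^\gamma+R$ on the bubble scale, the leading term $\lambda^2 A^p U^p=\lambda^{-(n-2)/2}\mu^{-(n+2)/2}U^p$ matches and cancels the leading part of $\lambda^{-(n-2)/2}\mathcal{L}_g\mathcal{W}_{t,\tau}$ by construction. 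The $\mu^2$ Riemann correction is annihilated by the $\mu^2\mathcal{V}$-piece through the defining equation of $V$ in case \eqref{w2}, vanishes identically in case \eqref{w3} by flatness of $g_\xi$ near $\xi$, and is simply kept and bounded in case \eqref{w1} (low $n$).

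The surviving inner residuals are (i) the $h$-piece $-h(x)A^p U^p\sim\lambda^{-(n+2)/2}\mu^{\gamma-(n+2)/2}|z|^\gamma U^p$; (ii) subleading geometric terms of order $\mu^4$ (or $\mu^2$ coupled to $V$); and (iii) the cross-term $\lambda^2 A^{p-1}U^{p-1}u_0$ together with the nonlinear tail of the binomial expansion. Each is estimated in $L^{2n/(n+2)}$ by pulling back through $x\mapsto z$ (Jacobian $\mu^n(1+O(\mu^2|z|^2))$) to a scale-invariant integral of type $\int|z|^k U^q\,dz$, possibly with $V$ or $\partial^2 U$ in place of $U$. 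These converge away from the dimensional thresholds appearing in Theorem~\ref{main0}, and at borderline dimensions the cutoff produces a $\log\mu$ or polynomial-in-$\mu$ loss that is absorbed into $\epsilon$. The $o(|y|^\gamma)$ remainder $R$ in \eqref{h} supplies the small positive $\epsilon$ gain. Substituting $\mu=t\lambda^\beta$ from \eqref{mu1}--\eqref{mu3} converts each $\mu$-exponent into the claimed $\lambda$-exponent, and the worst one matches the stated bound in each of the two alternatives.

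The main obstacle is the case-by-case bookkeeping across the five geometric regimes: one must verify that the correction $\mathcal{V}$ and the conformal change $g\mapsto g_\xi$ remove \emph{precisely} the would-be leading Riemann term, and that no cross-term or borderline-dimension loss secretly beats the claimed exponent. The pointwise decay estimates on $U$ and $V$ recorded in Section~\ref{sec1} are what make the resulting scale-invariant integrals converge and allow one to track the exact powers of $\mu$.
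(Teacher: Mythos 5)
Your plan reaches the same estimates via a somewhat different bookkeeping. The paper does not split $M$ by region; it decomposes $E$ algebraically as $E=E_1+E_2+E_3$ by adding and subtracting the two pure powers $\W_{t,\tau}^p$ and $(\lambda^{(n-2)/2}u_0)^p$: $E_1$ is the self-error of the bubble, $E_2$ the binomial cross-remainder of $(\W_{t,\tau}+\lambda^{(n-2)/2}u_0)^p$, and $E_3$ the $h$-weighted analogue. The cross pieces $E_2,E_3$ are then bounded directly from the elementary inequality of Lemma~\ref{yyl} together with the pointwise decay \eqref{sti-crux} of $\W_{t,\tau}$, while the self-error $E_1$ is not re-derived from \eqref{conf-eu} as you outline but is simply quoted from Lemma~3.1 of \cite{espi} (cases using the $V$-correction) and Lemma~7.1 of \cite{rove} (locally conformally flat case). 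Both routes end by substituting $\mu=t\lambda^\beta$ and matching exponents, and your intuition about what the $V$-correction and the conformal normalization are there to cancel is correct. One point to correct in your aside: you cannot ``absorb'' the exterior $-\lambda^2 u_0^p$ tail by replacing $u_0$ with $u_{0,\lambda}$; the ansatz \eqref{firstaprox} is built around the fixed function $u_0$ (and the reduced-energy expansion in Lemma~\ref{cruciale} is written around $u_0$ as well), so that term must simply be carried along --- the paper keeps it inside $E_2$ and records it as $O(\lambda^2)$. Your inner/outer split would also oblige you to control the $\nabla\chi$ transition-annulus terms separately, a chore the algebraic splitting sidesteps since the cutoff is already embedded in the pointwise bound \eqref{sti-crux}.
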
 
\begin{proof} The proof is postponed in Subsection \ref{errore-proof}.\end{proof}

Then, we develop a solvability theory for the linearized operator $L$ defined in \eqref{elle} under suitable orthogonality conditions.   
\begin{proposition}\label{PropLinearTheory}
Let $a,b\in \rr_+$ be fixed numbers such that $0<a<b$ and $K$ be a compact set in $\rr^n.$ There exist positive numbers $\lambda_0$ and $C$, such that for any $\lambda\in(0,\lambda_0)$, for any $t\in[a,b]$ and for any point $\tau\in K$, given  $\ell\in L^{\frac{2n}{n+2}}(M)$ there is a unique function $\phi_\la=\phi_{\la,t,\tau}(\ell)$ and unique scalars $c_i$, $i=0,\dots,n$ which solve the linear problem 
\beq\label{LinearTheory}
\left \lbrace
\begin{array}{rcll}
L(\phi)&=&\displaystyle \ell+\sum_{i=0}^nc_iZ_{i,t,\tau}&\mbox{on }M,\\
\displaystyle \int_M \phi Z_{i,t,\tau}d\nu_g&=& 0, &\mbox{for all }i=0,\dots,n.
\end{array}\right.
\eeq 
Moreover,
\beq\label{LinTheEstimate}
\displaystyle \|\phi_\la\|_{H_g^1(M)}\leq C \|\ell\|_{ L^{\frac{2n}{n+2}}(M)}.
\eeq
\end{proposition}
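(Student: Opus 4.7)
The plan is to follow the classical Lyapunov-Schmidt strategy: establish the a priori estimate \eqref{LinTheEstimate} by contradiction via a blow-up analysis, and then deduce existence and uniqueness from the Fredholm alternative on $H^\perp$. Once the bound is known, reformulating \eqref{LinearTheory} on the Hilbert space $H^\perp$ via the Riesz representation theorem and using the compact Sobolev embeddings $H^1_g(M)\hookrightarrow L^q(M)$ for $q<\frac{2n}{n-2}$, one can rewrite the operator $L$ (after projecting off $\mathrm{span}\{Z_{i,t,\tau}\}$) as $I-K$ with $K$ compact. Since \eqref{LinTheEstimate} implies injectivity, surjectivity and norm control follow at once.

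For the a priori bound, I would argue by contradiction. Suppose there exist $\lambda_n\to 0$, $t_n\to t_0\in[a,b]$, $\tau_n\to\tau_0\in K$, $\ell_n$ with $\|\ell_n\|_{L^{2n/(n+2)}}\to 0$, $\phi_n\in H^\perp$, and scalars $c_i^n$ satisfying $L(\phi_n)=\ell_n+\sum_i c_i^n Z_{i,t_n,\tau_n}$ with $\|\phi_n\|_{H^1_g}=1$. Testing this identity against each $Z_{j,t_n,\tau_n}$ and using the orthogonality of $\phi_n$ to the family $\{Z_{i,t_n,\tau_n}\}$, one obtains a linear system in the $c_i^n$ whose coefficient matrix, after the natural rescaling by $\mu_n$, approaches a diagonal matrix with nonzero entries proportional to $\int_{\mathbb{R}^n}pU^{p-1}Z_j^2\,dy$; this together with the smallness of $\ell_n$ and the $H^1_g$-boundedness of $\phi_n$ forces $c_i^n\to 0$.

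The heart of the argument is a two-scale analysis of $\phi_n$. Near $\xi$, I would rescale by setting $\tilde\phi_n(y)\coloneqq\mu_n^{(n-2)/2}\phi_n(\exp_\xi(\mu_n(y+\tau_n)))$; this sequence is bounded in $\mathcal{D}^{1,2}(\mathbb{R}^n)$ and, along a subsequence, converges weakly to some $\tilde\phi_\infty$ solving $-\Delta\tilde\phi_\infty=pU^{p-1}\tilde\phi_\infty$ on $\mathbb{R}^n$ with $\int_{\mathbb{R}^n}\tilde\phi_\infty Z_i\,dy=0$ for $i=0,\dots,n$, so the kernel classification recalled in \eqref{Eq14} and \cite{bieg} forces $\tilde\phi_\infty\equiv 0$. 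Simultaneously, $\phi_n\rightharpoonup\phi_\infty$ weakly in $H^1_g(M)$ and strongly in $L^q$ for $q<\frac{2n}{n-2}$. Since $\lambda_n^{-(n-2)/2}\W_{t_n,\tau_n}$ vanishes uniformly on compact subsets of $M\setminus\{\xi\}$ while its mass concentrates at $\xi$, passing to the limit in the equation off $\xi$ shows that $\phi_\infty$ satisfies $-\Delta_g\phi_\infty+c(n)R_g\phi_\infty-phu_0^{p-1}\phi_\infty=0$ on $M$; the nondegeneracy hypothesis \eqref{nd1} then forces $\phi_\infty\equiv 0$.

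To close the contradiction I would test $L(\phi_n)=\ell_n+\sum_i c_i^n Z_{i,t_n,\tau_n}$ against $\phi_n$ itself. Using a cutoff at scale $R\mu_n$ around $\xi$ for $R$ large, the quadratic form $\int_M(\lambda_n^2+h)f'(\UU_{\lambda_n})\phi_n^2\,d\nu_g$ decomposes into an inner piece which, after rescaling, equals $\int_{|y|<R}pU^{p-1}\tilde\phi_n^2\,dy+o(1)\to 0$, and an outer piece converging to $\int_M phu_0^{p-1}\phi_\infty^2\,d\nu_g=0$. Combined with $\|\ell_n\|_{L^{2n/(n+2)}}\to 0$ and $c_i^n\to 0$, this forces $\|\phi_n\|_{H^1_g}\to 0$, contradicting $\|\phi_n\|=1$. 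The main obstacle I expect is precisely this two-scale energy split: one must verify that the contributions of the bubble profile and of the regular profile $u_0$ to $f'(\UU_{\lambda_n})\phi_n^2$ do not interact in the limit, which relies on the sharp pointwise decay of the bubble $U$ (and of the correction term $V$ in the cases where \eqref{w2} is used) and on the compactness away from $\xi$ provided by the nondegeneracy of $u_0$.
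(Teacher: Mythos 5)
Your proposal follows essentially the same blow-up contradiction argument as the paper: testing against the $Z_{j,t,\tau}$ to show the multipliers $c_i$ vanish in the limit, rescaling at scale $\mu$ near $\xi$ to get a limiting solution of $-\Delta v = pU^{p-1}v$ annihilated by the orthogonality constraints, passing to the weak limit away from $\xi$ to land in the kernel of the linearization around $u_0$ (ruled out by the non-degeneracy hypothesis \eqref{nd1} and Lemma \ref{nonde}), and finally testing against $\phi_n$ itself to contradict $\|\phi_n\|_{H^1_g}=1$; existence and uniqueness then follow from the Fredholm alternative exactly as you say. The only minor cosmetic difference is your use of an explicit cutoff at scale $R\mu_n$ in the last step and the weighted matrix $\int pU^{p-1}Z_iZ_j\,dy$ in place of the paper's $\int Z_iZ_j\,dy$, but these lead to the same conclusion.
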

\begin{proof} The proof is postponed in Subsection \ref{lin-proof}.\end{proof}

Next, we reduce the problem to a finite-dimensional one by solving a non-linear problem.
\begin{proposition}\label{IntermediateProp}
Let $a,b\in \rr_+$ be fixed numbers such that $0<a<b$ and $K$ be a compact set in $\rr^n.$ There exist positive numbers $\lambda_0$ and $C$, such that for any $\lambda\in(0,\lambda_0)$, for any $t\in[a,b]$ and for any point $\tau\in K$, there is a unique function $\phi_\la=\phi_{\la,t,\tau}$ and unique scalars $c_i$, $i=0,\dots,n$ which solve the non-linear problem 
\beq\label{IntermediateProblem}
\left \lbrace
\begin{array}{rcll}
L(\phi)&=&\displaystyle -E+(\lambda^2+h)N(\phi)+\sum_{i=0}^nc_iZ_{i,t,\tau}&\mbox{on }M,\\
\displaystyle \int_M \phi Z_{i,t,\tau}d\nu_g&=& 0, &\mbox{for all }i=0,\dots,n.
\end{array}\right.
\eeq 
Moreover,
\beq\label{estimatePhiError}
\|\phi_\la\|_{H_g^1(M)}\leq C \|E\|_{L^{\frac{2n}{n+2}}(M)} 
\eeq
and $\phi_\la$ is continuously differentiable with respect to $t$ and $\tau.$
\end{proposition}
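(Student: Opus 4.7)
The strategy is to reformulate \eqref{IntermediateProblem} as a fixed point equation and solve it by contraction. Let $T_\la : L^{\frac{2n}{n+2}}(M) \to H^\perp$ denote the linear solution operator supplied by Proposition \ref{PropLinearTheory}, so that $T_\la(\ell)$ is the unique $\phi \in H^\perp$ satisfying \eqref{LinearTheory}, with $\|T_\la(\ell)\| \le C\|\ell\|_{L^{\frac{2n}{n+2}}(M)}$ and $C$ uniform in $(t,\tau) \in [a,b]\times K$ and in small $\la$. Then \eqref{IntermediateProblem} is equivalent to the fixed point problem
$$
\phi = \mathcal F(\phi) \coloneqq T_\la\bigl(-E + (\la^2+h)\,N(\phi)\bigr),
$$
which I would attack on the closed ball $\mathcal B_\la \coloneqq \{\phi \in H^\perp : \|\phi\| \le 2C\,\|E\|_{L^{\frac{2n}{n+2}}(M)}\}$.

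The analytical core consists of the standard superlinear estimates
$$
\|N(\phi)\|_{L^{\frac{2n}{n+2}}(M)} \le C\|\phi\|^{\min(p,\,2)}, \qquad
\|N(\phi_1)-N(\phi_2)\|_{L^{\frac{2n}{n+2}}(M)} \le C\bigl(\|\phi_1\|+\|\phi_2\|\bigr)^{\min(p,\,2)-1}\|\phi_1-\phi_2\|,
$$
valid for $\|\phi\|,\|\phi_j\| \le 1$. These follow from the mean value theorem applied to $f(s)=(s^+)^p$ together with the Sobolev embedding $H^1_g(M) \hookrightarrow L^{\frac{2n}{n-2}}(M)$; the regimes $p \ge 2$ (i.e.\ $n \le 6$) and $p < 2$ (i.e.\ $n \ge 7$) are handled separately in the usual way, and the factor $\la^2+h \in L^\infty(M)$ only changes the multiplicative constant. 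Combining these with Proposition \ref{errore}, which provides $\|E\|_{L^{\frac{2n}{n+2}}(M)} = O(\la^\sigma)$ for some $\sigma > 0$ uniformly in $(t,\tau)$, one verifies that for $\la$ sufficiently small the map $\mathcal F$ sends $\mathcal B_\la$ into itself and is a strict contraction there. Banach's fixed point theorem then delivers a unique $\phi_\la \in \mathcal B_\la$ satisfying \eqref{estimatePhiError}; the scalars $c_i$ are those produced by $T_\la$ when applied to $-E + (\la^2+h)N(\phi_\la)$, and are unique by the uniqueness clause of Proposition \ref{PropLinearTheory}.

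For the continuous differentiability in $(t,\tau)$, I would invoke the implicit function theorem for $G(t,\tau,\phi) \coloneqq \phi - \mathcal F(\phi)$ in $H^\perp$. The partial derivative $\partial_\phi G = I - T_\la \circ \bigl[(\la^2+h)\,f'(\UU_\la + \phi_\la)\phi_\la \;{-}\;(\la^2+h)f'(\UU_\la)\,\cdot\,\bigr]$ differs from the identity by an operator of norm $o_\la(1)$, thanks to the same contraction estimate, hence is invertible with uniformly bounded inverse. The remaining smoothness of $\mathcal F$ in $(t,\tau)$ is inherited from the explicit, smoothly varying ansatz $\W_{t,\tau}$, the cut-off, the basis $\{Z_{i,t,\tau}\}$, and consequently $E$ and $T_\la$ itself. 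The main technical obstacle is really upstream of this proposition: it is the construction and uniform bound for $T_\la$ in Proposition \ref{PropLinearTheory}. Once that is in hand, the present argument is a routine Lyapunov-Schmidt fixed point scheme.
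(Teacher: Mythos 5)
Your proposal reproduces the paper's own proof, which appears (under the mislabeled heading ``The reduced energy'', Subsection~\ref{nonlin-proof}) as a fixed point argument: write the problem as $\phi=T_{t,\tau}(-E+(\la^2+h)N(\phi))=:A(\phi)$, show $A$ is a contraction on a ball of radius $O(\|E\|_{L^{2n/(n+2)}})$ in $H^\perp$, and conclude by Banach. Your treatment of the $(t,\tau)$-regularity via the implicit function theorem is a reasonable expansion of the paper's terse ``the fixed point characterization obviously yields so for the map $t,\tau\to\phi$.'' You are also, in one respect, more careful than the paper: your estimate $\|N(\phi)\|_{L^{2n/(n+2)}}\le C\|\phi\|^{\min(p,2)}$ distinguishes $p\ge2$ ($n\le6$) from $p<2$ ($n\ge7$), whereas the paper simply writes $\|N(\phi)\|\le C\|\phi\|^p$ (false for $p>2$, since a term $\sim\UU_\la^{p-2}\phi^2$ dominates $|\phi|^p$ when $|\phi|\ll\UU_\la$) and then asserts $p-1\in(0,1)$ for all $n\ge3$, which fails for $n\le5$.

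There is, however, a gap in your proof that is also present in the paper, and I want to flag it. For $p>2$, the estimate $\|N(\phi)\|_{L^{2n/(n+2)}}\le C\|\phi\|^2$ comes from H\"older: $\|\UU_\la^{p-2}\phi^2\|_{L^{2n/(n+2)}}\le\|\UU_\la\|^{p-2}_{L^{2n/(n-2)}}\|\phi\|^2_{L^{2n/(n-2)}}$, and since $\UU_\la=\la^{-(n-2)/2}\W_{t,\tau}+u_0$, the factor $\|\UU_\la\|^{p-2}_{L^{2n/(n-2)}}\sim\la^{-(n-2)(p-2)/2}=\la^{-(6-n)/2}$ blows up as $\la\to0$. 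Against the size $\|E\|$ given by Proposition~\ref{errore}, one checks that $\la^{-(6-n)/2}\|E\|\to\infty$ for $n=3$ (and is borderline for $n=4$), so the contraction does not close with this uniform-constant estimate. What saves the argument is exactly the factor $\la^2+h$, which you dismiss as ``only changing the multiplicative constant'': because $h(\xi)=0$ and $|h(y)|\lesssim|y|^\gamma$, the weight $(\la^2+h)$ is small precisely where $\UU_\la$ is large, and one computes $\|(\la^2+h)\UU_\la^{p-2}\|_{L^{2n/(6-n)}}=O(\la^{(n-2)/2})+O\big((\mu/\la)^{(6-n)/2}\big)+O(1)$, which is bounded uniformly in $\la$ (recall $\mu/\la=t\la^{\beta-1}\to0$). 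So the fix is to keep the weight inside the norm and never separate off a $\la$-dependent constant. You should replace the throwaway remark about $\la^2+h$ with this computation; the rest of your scheme then goes through.
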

\begin{proof} The proof relies on standard arguments (see \cite{espive1}).\end{proof}

After Problem \eqref{IntermediateProblem} has been solved, we find a solution to Problem \eqref{LyapSch} if we manage to adjust $(t,\tau)$ in such a way that 
\beq\label{eqcoef}
c_i(t,\tau)=0,\quad \mbox{for all }i=0,\dots,n.
\eeq
This problem is indeed variational: it is equivalent to finding critical points of a function of $t,\tau$. To see that let us introduce the energy functional $J_\la$ defined on $H_g^1(M)$ by
$$
J_\la(u)= \int_M \(\frac12|\nabla_g u|_g^2+\frac12 c(n)R_gu^2-\frac{\la^2}{p+1} (u^+)^pu+\frac1{p+1} h(u^+)^pu\)d\nu_g.
$$
An important fact is that the positive critical points of $J_\la$ are solutions to \eqref{scp}. For any number $t>0$ and any point $\tau\in \mathbb R^n$, we define the reduced energy
\begin{equation}\label{reduced-energy}
\mathcal{J}_\la(t,\tau)\coloneqq J_\la(\UU_\la +\phi_\la),
\end{equation}
where $\UU_\la=\UU_{\la,t,\tau}$ is as in \eqref{firstaprox} and $\phi_\la=\phi_{\la,t,\tau}$ is given by Proposition \ref{IntermediateProp}. Critical points of $\mathcal J_\la$ correspond to solutions of \eqref{eqcoef} for small $\la$, as the following result states.

\begin{lemma}\label{ridotto} The following properties hold:
\begin{enumerate}[leftmargin=*]
\item[(I)] There exists $\lambda_0>0$ such that for any $\lambda\in(0,\lambda_0)$ if $(t_\la,\tau_\la)$ is a critical point of $\mathcal{J}_\la$ then the function $u_\la=\UU_\la +\phi_{\la,t_\la,\tau_\la}$ is a solution to \eqref{scp1}.
\item[(II)] Let $a,b\in \rr_+$ be fixed numbers such that $0<a<b$ and let $K$ be a compact set in $\rr^n.$ There exists $\lambda_0>0$ such that, for any $\lambda\in(0,\lambda_0)$, we have:
\begin{itemize}
\item[(a)] if $n\ge10$, $\textnormal{Weyl}_g(\xi)\not=0$, and \eqref{mu2} holds then  
\begin{equation}\label{rido1}
\mathcal{J}_\la(t,\tau)=A_0- \la^{\frac{2(n+2)-\alpha(n-2)}{ \alpha-2}}\left(\underbrace{\[A_1|\textnormal{Weyl}_g(\xi)|_g^2t^4-A_2t^{2+\alpha}\sum\limits_{i=1}^n\int\limits_{\rr^n}a_i \frac{|y_i+\tau_i|^{2+\alpha}}{(1+|y|^2)^n}dy\]}_{\Theta_1(t,\tau)}+o(1)\right),
\end{equation} 
$C^1-$uniformly with respect to $t\in[a,b]$ and $\tau\in K$;
\item[(b)] if one of the following conditions is satisfied:
\begin{itemize}
\item[(i)] $3\le n\le 5$ and \eqref{mu1} holds;
\item[(ii)] $6\le n\le9$ and \eqref{mu9} holds;
\item[(iii)] $n\ge10$, $(M,g)$ is locally conformally flat, and \eqref{mu3} holds;
\end{itemize}
then   
\begin{equation}\label{rido2}
\mathcal{J}_\la(t,\tau)=A_0- \la^\frac{(n-2-\alpha)(n-2)}{2\alpha-n+6 }\left(\underbrace{\[A_3u_0(\xi)t^\frac{n-2}{2}-A_2t^{2+\alpha}\sum\limits_{i=1}^n\int\limits_{\rr^n}a_i \frac{|y_i+\tau_i|^{2+\alpha}}{(1+|y|^2)^n}dy\]}_{\Theta _2(t,\tau)}+o(1)\right),
\end{equation}
$C^1-$uniformly with respect to $t\in[a,b]$ and $\tau\in K$.
\end{itemize}
Here, $A_1,$ $A_2$, and $A_3$ are constants only depending on $n$ and
\beq\label{a0}
\begin{split}
A_0\coloneqq\int\limits_M\(\frac12|\nabla_g u_0|_g^2+\frac12 c(n)R_gu_0^2-\frac{\la^2}{p+1}  u_0^{p+1} \right. & \left. +\frac1{p+1} hu_0^{p+1}\)d\nu \\
& +\int\limits_{\mathbb R^n}\(\frac 12|\nabla U|^2-\frac 1{p+1}U^{p+1}\)dy.
\end{split}
\eeq
\end{enumerate}
\end{lemma}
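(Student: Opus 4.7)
The two parts are essentially independent: (I) is a standard variational reduction (critical points of the reduced functional force all Lagrange multipliers $c_i$ to vanish), while (II) is an asymptotic expansion of $\mathcal{J}_\la$ around the explicit ansatz $\UU_{\la,t,\tau}$, using that the perturbation $\phi_\la$ given by Proposition \ref{IntermediateProp} is of smaller order than the leading corrections.

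\textbf{Part (I).} The plan is to differentiate $\mathcal{J}_\la(t,\tau)=J_\la(\UU_\la+\phi_\la)$ with respect to $t$ and each $\tau_j$, and to use the identity $J_\la'(\UU_\la+\phi_\la)=\sum_{i=0}^n c_i Z_{i,t,\tau}$ supplied by \eqref{IntermediateProblem}. This yields
$$
\partial_s \mathcal{J}_\la(t,\tau)=\sum_{i=0}^n c_i\int_M Z_{i,t,\tau}\,\partial_s(\UU_\la+\phi_\la)\,d\nu_g,\qquad s\in\{t,\tau_1,\dots,\tau_n\}.
$$
Differentiating the orthogonality condition $\int_M\phi_\la Z_{i,t,\tau}\,d\nu_g=0$ in $s$ one obtains $\int Z_{i,t,\tau}\partial_s\phi_\la=-\int\phi_\la\partial_s Z_{i,t,\tau}$, which is $o(1)$ by \eqref{estimatePhiError} and the $s$-regularity of $\phi_\la$. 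On the other hand $\partial_t\UU_\la$ and $\partial_{\tau_j}\UU_\la$ coincide, to leading order, with $\la^{-\frac{n-2}{2}}$-rescaled multiples of $Z_{0,t,\tau}$ and $Z_{j,t,\tau}$ respectively, so the coefficient matrix of the linear system in the $c_i$'s is diagonally dominant and invertible for $\la$ small. Hence all $c_i(t_\la,\tau_\la)=0$ and $u_\la=\UU_\la+\phi_{\la,t_\la,\tau_\la}$ solves \eqref{scp1}.

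\textbf{Part (II).} The plan is to write
$$
\mathcal{J}_\la(t,\tau)=J_\la(\UU_\la)+\Big[J_\la(\UU_\la+\phi_\la)-J_\la(\UU_\la)\Big],
$$
and show that the bracket is negligible compared with the claimed leading correction. By Taylor expansion and the orthogonality relations,
$$
J_\la(\UU_\la+\phi_\la)-J_\la(\UU_\la)=\langle J_\la'(\UU_\la),\phi_\la\rangle+O\bigl(\|\phi_\la\|^2\bigr)=O\bigl(\|E\|\cdot\|\phi_\la\|+\|\phi_\la\|^2\bigr)=O\bigl(\|E\|^2\bigr),
$$
by Propositions \ref{errore}--\ref{IntermediateProp}, and an analogous argument handles the $C^1$ norm in $(t,\tau)$ (differentiating the optimality identity for $\phi_\la$ and using \eqref{LinTheEstimate}). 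The exponents $\frac{2(n+2)-\alpha(n-2)}{\alpha-2}$ and $\frac{(n-2-\alpha)(n-2)}{2\alpha-n+6}$ are chosen precisely so that $\|E\|^2\ll \la^{\text{(that exponent)}}$, which is where the upper bounds on $\alpha$ in \eqref{mu1}--\eqref{mu3} enter. Thus the whole task reduces to expanding $J_\la(\UU_\la)$.

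\textbf{Expansion of $J_\la(\UU_\la)$.} I would split $M=B_g(\xi,r)\cup\big(M\setminus B_g(\xi,r)\big)$ and, on the inner region, pass to normal (or $g_\xi$-flat) coordinates and rescale by $\mu$. The far region contributes $J_\la(u_0)$, plus exponentially small terms, producing the $u_0$-part of $A_0$. The inner region contributes the pure bubble energy $\int_{\rr^n}(\tfrac12|\nabla U|^2-\tfrac1{p+1}U^{p+1})$, plus three families of corrections:
\begin{enumerate}
\item[(i)] a \emph{geometric} term obtained by inserting the expansion $g^{ij}=\delta^{ij}-\tfrac13 R_{iabj}(\xi)y_ay_b+\cdots$ and $R_g(\xi)$ into $\int|\nabla_g\UU_\la|_g^2+c(n)R_g\UU_\la^2$; in case (a), after using the defining equation of $\V_{t,\tau}$ and the Ricci/Weyl identities of \cite{espi}, this produces the $A_1|\textnormal{Weyl}_g(\xi)|_g^2 t^4$ piece; in case (b)(i)--(iii) the analogous term is absent (either by locally conformally flatness, the change of metric $g_\xi$, or direct cancellation with $\mathcal{V}_{t,\tau}$);
\item[(ii)] an \emph{interaction} term $\int h\,U_{\mu,\tau}^{p}u_0$ on the inner region, which in case (b) evaluates at leading order to $A_3 u_0(\xi)t^{\frac{n-2}{2}}$ (this piece is absent in (a) since there $u_0\equiv 0$ in the bubble calculation or the Weyl term dominates);
\item[(iii)] the $h$-term $-\tfrac{1}{p+1}\int h\,\UU_\la^{p+1}$, where one substitutes the flatness expansion $h(y)=-\sum_i a_i|y_i|^\gamma+R(y)$ from \eqref{h}, rescales $y=\mu(z+\tau)$, and obtains $A_2 t^{2+\alpha}\sum_i a_i\int_{\rr^n}|z_i+\tau_i|^{2+\alpha}/(1+|z|^2)^n\,dz$, the remainder being absorbed into $o(1)$.
\end{enumerate}

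\textbf{Main obstacle.} The serious work is the bookkeeping of exponents: one must check that, with the choices \eqref{mu1}, \eqref{mu2}, \eqref{mu9}, \eqref{mu3}, the two surviving pieces in $\Theta_1$ (resp.\ $\Theta_2$) appear at exactly the same power of $\la$ while every other contribution (Ricci terms not captured by $|\textnormal{Weyl}|^2$, the $\la^2\int\UU_\la^{p+1}$ correction, the higher-order remainder $R(y)$, and the $\|E\|^2$ reduction error) is $o(1)$ relative to that common power. The $C^1$-uniformity, needed later to conclude existence of critical points of $\Theta_1,\Theta_2$, requires differentiating every step of the above computation in $(t,\tau)$; this is where the appendix estimates will be indispensable.
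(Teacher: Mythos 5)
Your overall strategy coincides with the paper's: (I) is the standard reduction (the paper defers to \cite{espive1}), and for (II) one shows $\mathcal{J}_\la(t,\tau)=J_\la(\UU_\la)+O(\|E\|^2_{L^{2n/(n+2)}})$ and then expands $J_\la(\UU_\la)$; the exponents in \eqref{mu1}--\eqref{mu3} are indeed calibrated so that the reduction error is negligible. However, there are two substantive inaccuracies in your sketch of the expansion. First, you attribute the $A_3u_0(\xi)t^{(n-2)/2}$ piece to the interaction $\int h\, U_{\mu,\tau}^{p}u_0$, but in fact this piece comes from the $\la^2$-weighted interaction: after rescaling, $-\tfrac{\la^2}{p+1}\int_M\UU_\la^{p+1}$ produces the cross term $-\la^{-(n-2)/2}\int_M\W_{t,\tau}^{p}u_0\,d\nu_g=-A_3u_0(\xi)(\mu/\la)^{(n-2)/2}+o(\cdot)$, while the $h$-weighted interaction $\la^{-n}\int_M h\W_{t,\tau}^{p}(\la^{(n-2)/2}u_0)d\nu_g$ is shown in the paper to be $o\big((\mu/\la)^{(n-2)/2}\big)$ and is absorbed into the remainder. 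Second, your explanation that the $u_0$-interaction is ``absent in (a) since $u_0\equiv 0$'' is not correct: $u_0$ is a fixed nontrivial solution of \eqref{nd1} in all the cases of Theorem \ref{main0}, and the interaction term is present in case (a) as well, it is merely of lower order than the Weyl term $A_1|\textnormal{Weyl}_g(\xi)|_g^2\,\mu^4/\la^{n-2}$ under the scaling \eqref{mu2}. Finally, a minor structural point: the paper does not split the domain $M$ into inner/outer regions to extract the leading terms, but rather performs an algebraic decomposition of the functional into terms involving $u_0$ alone (which give $A_0$), $\W_{t,\tau}$ alone (bubble energy plus the Weyl correction), and their cross terms; this is cleaner than a geometric split precisely because $u_0$ is nonvanishing in the bubble region, so a domain decomposition still forces you to track the same cross terms.
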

\begin{proof}The proof of (I) is standard (see \cite{espive1}). The proof of (II)  is postponed in Subsection \ref{ridotto-proof}.\end{proof}

The next result is essential to find solutions to \eqref{scp}.
\begin{lemma}\label{positivo}
There exists $\lambda_0>0$ such that for any $\lambda\in(0,\lambda_0)$ if $(t_\la,\tau_\la)$ is a critical point of $\mathcal{J}_\la$ then
the function $u_\la=\UU_\la +\phi_{\la,t_\la,\tau_\la}$ is a classical solution to \eqref{scp}.
\end{lemma}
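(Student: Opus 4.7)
The plan is to upgrade the conclusion of Lemma~\ref{ridotto}(I) from a weak $H^1_g(M)$-solution of \eqref{scp1} to a classical solution of \eqref{scp}. By Lemma~\ref{ridotto}(I), $u_\la=\UU_\la+\phi_{\la,t_\la,\tau_\la}$ solves $\mathcal L_g u_\la=(\la^2+h)(u_\la^+)^p$ weakly. Two steps remain: (a) $u_\la\in C^2(M)$, and (b) $u_\la>0$ on $M$, so that $(u_\la^+)^p=u_\la^p$ and \eqref{scp1} reduces to \eqref{scp}.

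For (a), I would apply the classical Brezis--Kato / Trudinger iteration to the critical-exponent equation for $u_\la$ in order to raise its integrability from $L^{2^*}(M)$ to $L^q(M)$ for every $q<\infty$, hence to $L^\infty(M)$. Once $u_\la\in L^\infty(M)$, Schauder estimates combined with $h\in C^2(M)$ yield $u_\la\in C^{2,\beta}(M)$. This step is completely standard.

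For (b), the argument splits according to the sign of $R_g$. When $R_g>0$, which covers Theorem~\ref{main1} and part of Theorem~\ref{main0}, I would test the equation against the negative part $u_\la^-\coloneqq\max\{-u_\la,0\}\in H_g^1(M)$. The disjoint-support identity $(u_\la^+)^p u_\la^-\equiv 0$, together with $\nabla_g u_\la\cdot\nabla_g u_\la^-=-|\nabla_g u_\la^-|_g^2$ and $u_\la u_\la^-=-(u_\la^-)^2$ a.e., yields
\[
\int_M\(|\nabla_g u_\la^-|_g^2+c(n)R_g(u_\la^-)^2\)d\nu_g=0,
\]
forcing $u_\la^-\equiv 0$, so that $u_\la\ge 0$. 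The strong maximum principle applied to the linear equation $-\Delta_g u_\la+[c(n)R_g-(\la^2+h)u_\la^{p-1}]u_\la=0$ (with bounded zero-order coefficient by the regularity step) then gives $u_\la>0$, since $u_\la\not\equiv 0$ because of the bubble profile. When $R_g$ may be non-positive, which is the remaining setting of Theorem~\ref{main0}, the identity above is no longer conclusive, and I would instead exploit the nondegenerate background $u_0$ from \eqref{nd1}. By compactness $u_0\ge c_0>0$ on $M$; the bubble term $\la^{-(n-2)/2}\W_{t,\tau}$ is non-negative for $\la$ small (the main bubble $\mathcal U_{t,\tau}$ is positive, and in the corrected ansatz \eqref{w2} the term $\mu^2\mathcal V_{t,\tau}$ is of strictly lower order on $\mathrm{supp}\,\chi$ thanks to the decay $|V(y)|=O((1+|y|^2)^{-(n-4)/2})$). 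Bootstrapping the bound $\|\phi_\la\|_{H_g^1(M)}\le C\|E\|_{L^{2n/(n+2)}(M)}\to 0$ from Proposition~\ref{IntermediateProp} up to $\|\phi_\la\|_{L^\infty(M)}\to 0$ by the same Brezis--Kato procedure, I obtain $u_\la\ge c_0-\|\phi_\la\|_{L^\infty(M)}>0$ for $\la$ sufficiently small.

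The main technical point I anticipate is the uniform $L^\infty$-bootstrap of $\phi_\la$ in the presence of the concentrating profile $\UU_\la$: the zero-order coefficient $p(\la^2+h)\UU_\la^{p-1}$ in $L$ blows up near $\xi$, so the iteration has to be performed carefully, for instance by splitting $M$ into a ball of radius comparable to $\mu$ around the concentration point and its complement, where the standard Moser-type iteration applies. This is the classical difficulty in Lyapunov--Schmidt reductions with critical exponent and is handled by routine, if technical, arguments.
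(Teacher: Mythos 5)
Your regularity step and your treatment of $R_g>0$ are sound and essentially match the paper: the disjoint-support identity you use is exactly the weak maximum principle the paper invokes for $\mathcal L_g$ when $R_g>0$, and the Struwe-type bootstrap to $C^2(M)$ is what the paper cites. The gap is in the case $R_g\le 0$. There you propose to upgrade $\|\phi_\la\|_{H^1_g(M)}\to 0$ to $\|\phi_\la\|_{L^\infty(M)}\to 0$ and then conclude $u_\la\ge c_0-\|\phi_\la\|_{L^\infty(M)}>0$. That $L^\infty$-convergence is not a routine consequence of the $H^1_g$-bound. A dimensional analysis near the concentration scale shows that the best one can expect from a Moser/Brezis--Kato iteration is an estimate of the form $\|\phi_\la\|_{L^\infty(M)}\lesssim \mu^{-\frac{n-2}{2}}\|\phi_\la\|_{H^1_g(M)}$, because the homogeneous $\dot H^1$-seminorm is scale-invariant while the $L^\infty$-norm picks up the factor $\mu^{-(n-2)/2}$ when you rescale around $\xi$ at scale $\mu$. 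Under any of the scalings \eqref{mu1}--\eqref{mu3} together with the error bound of Proposition~\ref{errore}, this factor diverges and the estimate is useless. Indeed $\phi_\la$ is a correction to a profile of size $\la^{-(n-2)/2}\mu^{-(n-2)/2}$ near $\xi$, and nothing in Proposition~\ref{IntermediateProp} prevents $\phi_\la$ itself from being unbounded there. A pointwise \emph{weighted} bound of the form $|\phi_\la|\le\varepsilon_\la\,\UU_\la$ with $\varepsilon_\la\to0$ would suffice, but that is a genuinely stronger statement that you would have to establish separately and is not part of the reduction scheme built in Section~\ref{sec2}.

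The paper avoids the issue with a purely $L^2$ argument. It introduces $\Omega_\la\coloneqq\{x\in M : u_\la(x)<\la\}$; on this set, $u_0\ge m_0>0$ and $\W_{t,\tau}\ge0$ force $\phi_\la<-m_0/2$, so the mere $L^2$-convergence $\|\phi_\la\|_{L^2(M)}\to 0$ (which \emph{does} follow from \eqref{estimatePhiError}) gives $|\Omega_\la|\to0$. Testing \eqref{scp1} against $(u_\la-\la)^-$ then yields coercivity via the Poincar\'e inequality on $\Omega_\la$, whose constant $C(\Omega_\la)$ tends to $+\infty$ as $|\Omega_\la|\to0$ and therefore dominates the terms $c(n)R_g$ and $C\la^{p-1}$ for small $\la$; this forces $(u_\la-\la)^-\equiv0$, i.e.\ $u_\la\ge\la>0$. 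Replacing your $L^\infty$-bootstrap with this $L^2$-Poincar\'e argument on a set of vanishing measure closes the gap without requiring any pointwise control of $\phi_\la$.
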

\begin{proof} By Lemma \ref{ridotto} we deduce that $u_\la$ solves \eqref{scp1}. Arguing as in Appendix B of \cite{struwe}, one easily sees that $u_\lambda\in C^2(M)$.

\medskip 
It only remains to   prove that $u_\la>0$ on $M$.  This is immediate in the positive case, i.e. $R_g>0$, because the maximum principle holds. Let us consider the case $R_g\le0.$

\noindent 

We consider the set $\Omega_\la\coloneqq\{x\in M\ | \(u_\lambda-\lambda\)^-(x)<0\}$. Let $m_0\coloneqq\min_Mu_0>0.$ By the definition of $u_\la$   we immediately get that for all $\lambda$ sufficiently small $\phi_\lambda <-\frac{m_0}2$ in $\Omega_\lambda$. Thus, since $\phi_\lambda \to 0$ in $L^2(M)$, we deduce $|\Omega_\lambda|\to 0$ as $\la \to 0$. 
Now, set $v \coloneqq u_\lambda-\lambda.$

Testing \eqref{scp1} against $v^-$ we get
\begin{align*}
\int_{\Omega_\la}| \nabla_g v^-|_g^2 d\nu_g+  c(n)R_g & \int_{\Omega_\la}(v^-)^2d\nu_g -\int_{0< u_\la < \la}(\la^2+h)(u_\la^+)^{p-1}(v^-)^2d\nu_g\\
& +\la\[c(n)R_g\int_{\Omega_\la}v^-d\nu_g -\int_{0<u_\la<\la}(h+\la^2)(u_\la^+)^{p-1}v^-d\nu_g \] =0
\end{align*}
  Poincar\'e's inequality yields
$$
\int_{\Omega_\la} | \nabla_g v^-|_g^2 d\nu_g\ge C(\Omega_\la)\int_{\Omega_\la}(v^-)^2d\nu_g
$$
where $C(\Omega_\lambda)$ is a positive constant approaching $+\infty$ as $|\Omega_\la|$ goes to zero.

On the other hand
$$
\left|\int_{0< u_\la < \la}(\la^2+h)(u_\la^+)^{p-1}(v^-)^2d\nu_g\right|\leq C\la^{p-1}\|v^-\|^2_{L^2(M)},
$$
and
$$
\left|\int_{0<u_\la<\la}(h+\la^2)(u_\la^+)^{p-1}v^-d\nu_g\right|\leq C\lambda^{p-1}\int_{\Omega_\la} |v^-|d\nu,
$$
for some positive constant $C$   not depending on $\la.$
Collecting the previous computations we get
$$
\underbrace{\(C(\Omega_\la)+c(n)R_g-C\la^{p-1}\)}_{>0\ \hbox{if}\ \lambda\approx 0}\|v^-\|^2_{L^2(M)}+\la\underbrace{\left(c(n)|R_g|-C\la^{p-1} \right)}_{>0\ \hbox{if}\ \lambda\approx 0}\int_{\Omega_\la}|v^-|d\nu_g
\leq 0
$$
which implies $v^-= 0$ if $\la$ is small enough. Since $u_\la\in C^2(M)$ we deduce that
$ 
u_\la \geq \la>0$ in $M
$ and the claim is  proved.
\end{proof}

\subsection{Proof of the main result}
Theorem \ref{main0}   is an immediate consequence of the more general result. 
\begin{theorem}
Assume \eqref{h} with $\gamma\coloneqq 2+\alpha.$ If one of the following conditions hold:
\begin{enumerate}[leftmargin=*]
\item $n\ge10$, the Weyl's tensor at $\xi$ does not vanish, and $2<\alpha<\frac{2n}{n-2}$;
\item $3\le n\le 5$ and $0\le\alpha<n-2$;
\item  $6\le n\le9$ and $\frac{n-6}2 <\alpha<\min\left\{\frac{16}{n-2},\frac{n^2-6n+16}{2(n-2)}\right\}$;
\item  $n\ge10$, $(M,g)$ is locally conformally flat, and $\frac{n-6}{2}<\alpha<\frac{n^2-6n+16}{2(n-2)}$;
\end{enumerate}
then, provided $\lambda$ is small enough, there exists a solution to \eqref{scp} which blows-up at the point $\xi$ as $\lambda\to0.$ 

Moreover, if $h\in C^\infty(M)$ then $\lambda^2+h$ is the scalar curvature of a metric conformal to $g.$
\end{theorem}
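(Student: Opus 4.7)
The strategy is to apply the Lyapunov--Schmidt reduction developed in Propositions \ref{errore}, \ref{PropLinearTheory}, and \ref{IntermediateProp}: for $\la$ small and $(t,\tau)$ varying in any fixed compact set $[a,b]\times K\subset (0,\infty)\times\rr^n$, the correction $\phi_{\la,t,\tau}$ is available, and combining Lemma \ref{ridotto}(I) with Lemma \ref{positivo} shows that every critical point of the reduced energy $\mathcal{J}_\la$ lying in the interior of $[a,b]\times K$ yields a positive classical solution $u_\la=\UU_\la+\phi_{\la,t_\la,\tau_\la}$ of \eqref{scp}, whose blow-up at $\xi$ is already encoded in the ansatz \eqref{ansatz}--\eqref{w3}. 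Thus the task reduces to producing a $C^1$-stable critical point of the leading order reduced functional supplied by Lemma \ref{ridotto}(II).

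By \eqref{rido1}--\eqref{rido2}, in every case (1)--(4) this leading functional has the common shape
$$
\Theta(t,\tau)=B\, t^{p}-A_2\, t^{2+\alpha}\sum_{i=1}^n a_i\, g_i(\tau_i),\qquad g_i(s)\coloneqq\int_{\rr^n}\frac{|y_i+s|^{2+\alpha}}{(1+|y|^2)^n}\,dy,
$$
with a positive constant $B$ depending on the case ($B=A_1|\textnormal{Weyl}_g(\xi)|_g^2$ in case (1) and $B=A_3 u_0(\xi)$ in cases (2)--(4)) and with exponent $p=4$ in case (1), $p=(n-2)/2$ in cases (2)--(4). The admissible range of $\alpha$ prescribed by the statement is in each case exactly what guarantees $p<2+\alpha$, which is the structural condition needed for a nontrivial positive critical $t$.

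I would then analyze $\Theta$ as follows. Each $g_i$ is smooth, even, and strictly convex in $s$ (since $s\mapsto |y_i+s|^{2+\alpha}$ is strictly convex for every $y$), so it admits its unique minimum at $s=0$; by the rotational symmetry of $(1+|y|^2)^{-n}$ we have $g_i(0)=I_0>0$ independently of $i$, and $g_i''(0)=(2+\alpha)(1+\alpha)\int_{\rr^n}|y_i|^{\alpha}(1+|y|^2)^{-n}dy>0$. Hence $\tau=0$ is a critical point of $\tau\mapsto \sum_i a_i g_i(\tau_i)$ with Hessian $\textnormal{diag}\bigl(a_1 g_1''(0),\dots,a_n g_n''(0)\bigr)$, invertible because all $a_i$ are nonzero, and with positive value $I_0\sum_i a_i>0$ thanks to the assumption $\sum_i a_i>0$. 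Consequently the one-variable function $t\mapsto B t^{p}-A_2 I_0(\sum_i a_i) t^{2+\alpha}$ admits a unique positive critical point $t_0$, at which its second derivative equals $(p-2-\alpha)(2+\alpha)A_2 I_0(\sum_i a_i)t_0^{\alpha}\neq 0$ since $p\neq 2+\alpha$. Thus $(t_0,0)$ is a nondegenerate critical point of $\Theta$, in general only a saddle when some $a_i$ have opposite signs.

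To conclude, choose $[a,b]\times K$ containing $(t_0,0)$ in its interior and small enough that $(t_0,0)$ is the unique zero of $\nabla\Theta$ therein; nondegeneracy yields Brouwer degree $\deg(\nabla\Theta,[a,b]\times K,0)=\pm 1$. The expansions \eqref{rido1}--\eqref{rido2} being $C^1$-uniform on $[a,b]\times K$, the vector field $-\la^{-\rho}\nabla\mathcal{J}_\la$ (with $\rho$ the corresponding power of $\la$) converges to $\nabla\Theta$ uniformly, and homotopy invariance of the degree gives $\deg(\nabla\mathcal{J}_\la,[a,b]\times K,0)=\pm 1$ for $\la$ small, producing a critical point $(t_\la,\tau_\la)$ of $\mathcal{J}_\la$ in the interior. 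Lemma \ref{positivo} then turns it into a positive classical solution $u_\la$ of \eqref{scp} concentrating at $\xi$, and when $h\in C^\infty(M)$ standard elliptic bootstrap upgrades $u_\la$ to $C^\infty(M)$, so $u_\la^{4/(n-2)}g$ realizes $\la^2+h$ as its scalar curvature. The main technical hurdle is exactly this persistence step: since the $a_i$ need not all share a sign, $(t_0,0)$ is typically a saddle, so a naive min-max or extremum procedure cannot be used and one really has to rely on nondegeneracy plus a Brouwer degree argument to transfer the critical point from $\Theta$ to $\mathcal{J}_\la$.
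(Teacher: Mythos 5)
Your proposal is correct and follows essentially the same route as the paper: reduce via Lyapunov–Schmidt to finding a non-degenerate critical point of the limiting functional $\Theta$, verify that $(t_0,0)$ is such a point using $\sum_i a_i>0$, $a_i\neq0$, and $\beta\neq\gamma$ (equivalently $p\neq 2+\alpha$), and conclude via Lemma~\ref{positivo}. The only difference is presentational — you spell out the Brouwer-degree persistence argument (and correctly note that the point is generically a saddle when the $a_i$ change sign, so one cannot rely on a min-max), whereas the paper treats the persistence of non-degenerate critical points under $C^1$-small perturbations as standard and leaves it implicit.
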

\begin{proof}
We will show that the functions $\Theta_1$ and $\Theta_2$, defined respectively in \eqref{rido1} and \eqref{rido2}, have a non-degenerate critical point provided $\sum\limits_{i=1}^n a_i>0$ and $a_i\not=0$ for any $i.$ As a consequence, provided $\la$ is small enough, the reduced energy $\mathcal J_\la$ has a critical point and by Lemma \ref{positivo} we deduce the existence of a classical solution to problem \eqref{scp}, which concludes the proof.

Without loss of generality, we can  consider the function
$$\Theta(t,\tau)\coloneqq t^\beta-t^\gamma\sum\limits_{i=1}^na_i\int\limits_{\mathbb R^n}|y_i+\tau_i|^\gamma f(y)dy,\ (t,\tau)\in(0,+\infty)\times \mathbb R^n,$$ 
where $\beta=4$ or $\beta=\frac{n-2}2$, $\gamma=\alpha+2$ and $f(y)=\frac A{\(1+|y|^2\)^n}$ for some positive constant $A.$
It is immediate to check that, because $\sum\limits_{i=1}^n a_i>0$, this function has a critical point $(t_0,0)$, where $t_0$ solves
$$
\beta t^\beta=\mathfrak c_1 \gamma t^\gamma \sum\limits_{i=1}^na_i,\quad \hbox{with}\ \mathfrak c_1\coloneqq \int\limits_{\mathbb R^n}|y_i |^\gamma f(y)dy\ \hbox{not depending on $i$},
$$
Moreover it is non-degenerate. Indeed,
a straightforward computation shows that 
 $$D^2\phi(t_0,0)=\(\begin{matrix}
 &\beta(\beta-\gamma)t_0^{\beta-2}&0&\dots&0\\
 &0&-\gamma(\gamma-1)\mathfrak c_2 t^\gamma_0a_1&\dots&0\\
&0&0&\dots&-\gamma(\gamma-1)\mathfrak c_2 t^\gamma_0a_n\\
 \end{matrix}\),$$
 where
 $\mathfrak c_2\coloneqq \int\limits_{\mathbb R^n}|y_i |^{\gamma -2} f(y)dy\
$ does not depend on $i,$ which is 
invertible because $\beta\not=\gamma$, $\beta>0$, and $a_i\not=0$ for any $i.$
\end{proof}

\section{The positive case: proof of Theorem \ref{main1}}\label{sec3}

In this section we find a solution to equation \eqref{scp} in the positive case, i.e. $R_g>0$ only assuming the local behavior \eqref{h} of the function $h$ around the local maximum point $\xi$. 
We build solutions to problem \eqref{scp} which blow-up at $\xi$ as $\la$ goes to zero, by   combining the ideas developed by Esposito, Pistoia and V\'etois \cite{espive1}, the Ljapunov-Schmidt argument used in Section \ref{sec2} and the estimates computed in the Appendix.
We omit all the details of the proof because they can be found (up to minor modifications) in \cite{espive1} and in the Appendix.
We only write the profile of the solutions we are looking for and the reduced energy whose critical points produce solutions to our problem.

\subsection{The ansatz}
Let us recall the construction of the main order term of the solution performed in \cite{espive1}.
 In case $\(M,g\)$ is locally conformally flat, there exists a family $\(g_\xi\)_{\xi\in M}$ of smooth conformal metrics to $g$ such that $g_\xi$ is flat in the geodesic ball $B_\xi\(r_0\)$. In case $\(M,g\)$ is not locally conformally flat, we fix $N>n$, and we find a family $\(g_\xi\)_{\xi\in M}$ of smooth conformal metrics to $g$ such that
\begin{equation*} 
\left|\exp_\xi^*g_\xi\right|\(y\)=1+O\big(\left|y\right|^N\big)
\end{equation*}
$C^1-$uniformly with respect to $\xi\in M$ and $y\in T_\xi M$, $\left|y\right|\ll1$, where $\left|\exp_\xi^*g_\xi\right|$ is the determinant of $g_\xi$ in geodesic normal coordinates of $g_\xi$ around $\xi$. Such coordinates are said to be {\it conformal normal coordinates} of order $N$ on the manifold. Here, the exponential map $\exp_\xi^*$ is intended with respect to the metric $g_\xi$.  For any $\xi\in M$, we let $\varLambda_\xi$ be the smooth positive function on $M$ such that $g_\xi=\varLambda_\xi^{\frac{4}{n-2}}g$. In both cases (locally conformally flat or not), the metric $g_\xi$ can be chosen smooth with respect to $\xi$ and such that $
 \varLambda_\xi\(\xi\)=1$ and $\nabla\varLambda_\xi\(\xi\)=0$. We let $G_g$ and $G_{g_\xi}$ be the respective Green's functions of $L_g$ and $L_{g_\xi}$. Using the fact that $\varLambda_\xi\(\xi\)=1$,   we deduce 
\begin{equation*} 
G_g\(\cdot,\xi\)=\varLambda_\xi (\cdot)\ G_{g_\xi}\(\cdot,\xi\).
\end{equation*}

\medskip
We define 
\begin{equation*} 
\mathcal W_{t,\tau}\(x\)=G_g\(x,\xi\)\widehat{\mathcal W}_{t,\tau}\(x\),
\end{equation*}
with
\begin{equation*} 
\widehat{\mathcal W}_{t,\tau}\(x\)\coloneqq \left\{\begin{aligned}
&\beta_n\lambda^{-\frac{(n-2)}2}\mu^{-\frac{n-2}{2}}d_{g_\xi}\(x,\xi\)^{n-2}U\(\frac{ d_{g_\xi}(x,\xi)}{\mu}-\tau \)&&\text{if }d_{g_\xi}\(x,\xi\)\le r\\
&\beta_n\lambda^{-\frac{(n-2)}{2}}\mu^{-\frac{n-2}{2}}r^{n-2}U\(\frac{r_0}{\mu}-\tau\)&&\text{if }d_{g_\xi}\(x,\xi\)>r,
\end{aligned}\right.
\end{equation*}
where $\beta_n=(n-2)\omega_{n-1}$, $\omega_{n-1}$ is the volume of the unit $\(n-1\)$--sphere, $\tau\in\mathbb R^n$ and the concentration parameter $\mu=\mu_\la(t)$ with $t>0$ is defined as (here $\alpha=\gamma-2$, being $\gamma$ the order of flatness of $h$ at the point $\xi$)
\begin{equation}\label{Eq8}
\mu=\left\{\begin{aligned}
&t\la^{\frac{2}{4+\alpha-n}}&&\text{if $n=3,4,5$ or  [$n\ge6$ and $\(M,g\)$ is lc.f.] with $n-4<\alpha<n-2$ }\\
&t\ell^{-1}\(\lambda^2\)&&\text{if }n=6\text{ and $\textnormal{Weyl}_g(\xi)\not=0$ with $2<\alpha<4$}\\
&t\la^\frac 2{\alpha-2}&&\text{if }n\ge7\text{ and $\textnormal{Weyl}_g(\xi)\not=0$ with $2<\alpha<n-2$},
\end{aligned}\right.
\end{equation}
where the function $\ell\(\mu\)\coloneqq-\mu^{2-\alpha}\ln\mu$ when $\mu$ is small.
We look for a solution to \eqref{scp} as
$u_\la=\mathcal W_{t,\tau}+\phi_\la$, where the higher order term is found arguing as in Section \ref{sec2}.

\subsection{The reduced energy}
Combining Lemma 1 in \cite{espive1} and Lemma \ref{cruciale} in the Appendix, the reduced energy $\mathcal{J}_\la$  introduced in \eqref{reduced-energy} (where the term $\mathcal U_\la$ is replaced by $\mathcal W_{t,\tau}$ and in particular $u_0=0$) reads as
\begin{enumerate}[leftmargin=*]
\item[(a)] if $n=3,4,5$ or  [$n\ge6$ and $\(M,g\)$ is l.c.f.] with $n-4<\alpha<n-2$ and \eqref{Eq8} holds then   
\begin{equation*} 
\mathcal{J}_\la(t,\tau)=A_0- \la^\frac{(n-6-\alpha)(n-2)}{4+\alpha-n }\left(  A_3 \mathfrak m(\xi)  t^ {n-2} -A_2t^{2+\alpha}\sum\limits_{i=1}^n\int\limits_{\rr^n}a_i \frac{|y_i+\tau_i|^{2+\alpha}}{(1+|y|^2)^n}dy +o(1)\right),
\end{equation*}
where $\mathfrak m(\xi)>0$ is the {\em mass} at the point $\xi$,
\item[(b)] if $n=6$, $\textnormal{Weyl}_g(\xi)\not=0$, and \eqref{Eq8} holds then  
\begin{equation*} 
\mathcal{J}_\la(t,\tau)=A_0- \left( -A_1|\textnormal{Weyl}_g(\xi)|_g^2\frac{\mu_\lambda^4(t) \ln\mu_\la(t)}{\la^4}-A_2\frac{\mu_\la^{2+\alpha}(t)}{\la^6}\sum\limits_{i=1}^6\int\limits_{\rr^n}a_i \frac{|y_i+\tau_i|^{2+\alpha}}{(1+|y|^2)^6}dy +o(1)\right),
\end{equation*}
\item[(c)] if $n\ge7$, $\textnormal{Weyl}_g(\xi)\not=0$ and \eqref{Eq8} holds then  
\begin{equation*} 
\mathcal{J}_\la(t,\tau)=A_0- \la^\frac{2(n+2)-\alpha(n-2)}{\alpha-2}\left( A_1|\textnormal{Weyl}_g(\xi)|_g^2t^4-A_2t^{2+\alpha}\sum\limits_{i=1}^n\int\limits_{\rr^n}a_i \frac{|y_i+\tau_i|^{2+\alpha}}{(1+|y|^2)^n}dy +o(1)\right),
\end{equation*} 
\end{enumerate}
$C^1-$uniformly with respect to $t$ in compact sets of $(0,+\infty)$ and $\tau$ in compact sets of $\mathbb R^n.$
Here $A_1,$ $A_2$ and $A_3$ are constants only depending on $n$ and $A_0$ depends only on  $n$ and $\lambda.$

\section{Appendix} We recall the following useful lemma (see, for example, \cite{yyl}).
\begin{lemma}\label{yyl}
For any $a>0$ and $b\in\mathbb R$ we have
$$
\left|\left((a+b)\right)^q- a^q\right|\le\left\{ 
\begin{array}{ll}
c(q)\min\left\{|b|^q,a^{q-1}|b| \right\} & \hbox{if}\ 0<q<1,\\
c(q)\left(|b|^q+a^{q-1}|b|\right)& \hbox{if}\ q\ge1,
\end{array}
\right.
$$
and
$$
\left|\left((a+b)^+\right)^{q+1}-a^{q+1}-(q+1)a^q b\right|\le\left\{ 
\begin{array}{ll}
c(q)\min\left\{|b|^{q+1},a^{q-1}b^2\right\}& \hbox{if}\ 0<q<1,\\
c(q)\left(|b|^{q+1}+a^{q-1}b^2\right)& \hbox{if}\ q\ge1.
\end{array}
\right.
$$
\end{lemma}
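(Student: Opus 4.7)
The plan is to prove both inequalities through a single case analysis based on the relative size of $|b|$ versus $a$, using Taylor's theorem in the regime where $a+b$ stays bounded away from zero and crude triangle-type bounds otherwise. I interpret $((a+b))^q$ as $((a+b)^+)^q$, which is the convention consistent with the nonlinearity $f(u)=(u^+)^p$ used throughout the paper.

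First I would split into the two regimes $|b|\le a/2$ and $|b|>a/2$. In the first regime one has $a+b\in[a/2,3a/2]$, so $t\mapsto t^q$ is smooth on the segment joining $a$ to $a+b$ and Taylor's theorem yields $(a+b)^q-a^q=q\xi^{q-1}b$ and $(a+b)^{q+1}-a^{q+1}-(q+1)a^q b=\tfrac12 q(q+1)\eta^{q-1}b^2$ for intermediate points $\xi,\eta\in[a/2,3a/2]$. Since $\xi^{q-1},\eta^{q-1}\le C(q)a^{q-1}$ in both cases $q\ge 1$ and $0<q<1$, this yields the $a^{q-1}|b|$ and $a^{q-1}b^2$ contributions. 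In the second regime the inequality $a<2|b|$ forces each of $((a+b)^+)^q$ and $a^q$ (and, for the second inequality, $(q+1)a^q|b|$) to be bounded by $C(q)|b|^q$ (respectively $C(q)|b|^{q+1}$), so the left-hand sides are controlled by the triangle inequality.

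The two regimes together immediately imply the $q\ge 1$ inequalities, where the right-hand side is a sum of the two bounds and any case estimate is enough. For the case $0<q<1$, where the sharper ``min'' bound is claimed, I would verify that each of the two terms $|b|^q$ and $a^{q-1}|b|$ (respectively $|b|^{q+1}$ and $a^{q-1}b^2$) bounds the left-hand side in both regimes: the $|b|^q$ bound holds globally from H\"older continuity of $t\mapsto t^q$, and the analogue for $|b|^{q+1}$ follows after subtracting the linear Taylor term and using Hölder continuity of $t\mapsto t^{q+1}$ together with $a^q|b|\le |a+b|^q|b|+|b|^{q+1}$; conversely, when $a<2|b|$ and $q<1$ the monotonicity $a^{q-1}\ge(2|b|)^{q-1}$ converts the $|b|^q$ bound in Regime~2 into an $a^{q-1}|b|$ bound and similarly for $a^{q-1}b^2$. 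The only mild subtlety is to handle the case $a+b<0$ in the $(u^+)^q$ convention, where $((a+b)^+)^q=0$ and one simply uses $|b|\ge a$ to obtain the required estimates by hand; no serious obstacle is expected.
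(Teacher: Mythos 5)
The paper does not prove this lemma; it is stated as a recalled elementary fact with a citation to Y.Y.~Li's article, so there is no in-text argument to compare against. Your proposal is the standard proof of these calculus inequalities and is structurally correct: split on $|b|\le a/2$ versus $|b|>a/2$; in the first regime apply Taylor's theorem and observe that the intermediate point $\xi\in[a/2,3a/2]$ satisfies $\xi^{q-1}\le c(q)a^{q-1}$ regardless of the sign of $q-1$; in the second regime bound each term crudely by powers of $|b|$. For $0<q<1$ you correctly note that the $|b|$-powered bound holds globally by subadditivity of $t\mapsto t^q$ and $1$-Lipschitzness of $t\mapsto t^+$, while in Regime~2 the sign of $q-1$ converts the $|b|$-powered bound into the $a$-powered one, so each argument of the $\min$ indeed dominates everywhere.

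One imprecision should be fixed: to obtain the global $|b|^{q+1}$ bound in the second inequality you invoke ``H\"older continuity of $t\mapsto t^{q+1}$,'' but for exponent $q+1>1$ that map is not sub-Lipschitz on $[0,\infty)$. What you actually need is the $q$-H\"older continuity of the \emph{derivative} $t\mapsto(q+1)(t^+)^q$. Writing the remainder in integral form,
$$
\big((a+b)^+\big)^{q+1}-a^{q+1}-(q+1)a^q b=(q+1)\int_0^1\Big[\big((a+\theta b)^+\big)^q-a^q\Big]\,b\,d\theta,
$$
and bounding $\big|\big((a+\theta b)^+\big)^q-a^q\big|\le|(a+\theta b)^+-a|^q\le|\theta b|^q\le|b|^q$ gives the desired $c(q)|b|^{q+1}$ directly; the auxiliary inequality $a^q|b|\le|a+b|^q|b|+|b|^{q+1}$ you mention is then not needed. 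With that correction the argument is complete.
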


\subsection{Estimate of the error}\label{errore-proof}
\begin{proof}[Proof of Lemma \ref{errore}]
We split the error \eqref{error} into
$$
E=(-\Delta_g+c(n)R_g)\left[\la^{-\frac{n-2}2}\W_{t,\tau}+u_0\right] -(\la^2+h)\left[\la^{-\frac{n-2}2}\W_{t,\tau}+u_0\right]^p=E_1+E_2+E_3, 
$$
where
\begin{align*}
E_1&=\la^{-\frac{n-2}2}\left[-\Delta_g \W_{t,\tau}+c(n)R_g\W_{t,\tau} +\W_{t,\tau}^p\right],\\
E_2&=-\la^{-\frac{n-2}2}\left[(\W_{t,\tau}+\la^{\frac{n-2}2}u_0)^p-\W_{t,\tau}^p \right],\\
E_3&=\la^{-\frac{n+2}2}h\left[(\W_{t,\tau}+\la^{\frac{n-2}2}u_0)^p-(\la^{\frac{n-2}2}u_0)^p \right].
\end{align*}
To estimate $E_2$ and $E_3$ we use the fact that the bubble $\W_{t,\tau}$ satisfies in the three cases
\begin{equation}\label{sti-crux}
\W_{t,\tau}(\textrm{exp}_\xi(y))=O\(\frac{\mu^\frac{n-2}{ 2}}{\left(\mu^2+|y-\mu\tau|^2\)^\frac{n-2}{2}}\right)\quad\hbox{if}\ |y-\xi|\le r.
\end{equation}
Indeed by \eqref{sti-crux} and Lemma \ref{yyl} we immediately deduce that
\begin{align*}
\|E_2\|_{L^{\frac{2n}{n+2}}(M)}&=O\left(\la^{-\frac{n-2}2}\left\|\la^{\frac{n-2}2}u_0\W_{t,\tau}^{p-1}\right\|_{L^{\frac{2n}{n+2}}(M)}\right)+O\left({\la^{-\frac{n-2}2}\left\|\(\la^{\frac{n-2}2}u_0\)^{p}\right\|_{L^{\frac{2n}{n+2}}(M)}}\right)\\ &=O\left(\left\|\W_{t,\tau}^{p-1}\right\|_{L^{\frac{2n}{n+2}}(M)} \right)+O\(\lambda^2\),
\end{align*}
with
$$
\left\|\W_{t,\tau}^{p-1}\right\|_{L^{\frac{2n}{n+2}}(M)}=\left\lbrace
\begin{array}{ll}
\displaystyle O\left(\mu^{\frac{n-2}{2}}\right)&\mbox{if }3\le n\le5,\\
\displaystyle O\left(\mu^2|\ln\mu|^{\frac23}\right)&\mbox{if }n=6,\\
\displaystyle O\left(\mu^2\right)&\mbox{if }n\ge7,
\end{array}\right.
$$
and
\begin{align*}
\|E_3\|_{L^{\frac{2n}{n+2}}(M)}&=
O\left({\la^{-\frac{n+2}2}}\left\|h\(\la^{\frac{n-2}2}u_0\)^{p-1}\W_{t,\tau}\right\|_{L^{\frac{2n}{n+2}}(M)} \right)
+	O\left(\la^{-\frac{n+2}2}\left\|h{\W_{t,\tau}^p}\right\|_{L^{\frac{2n}{n+2}}(M)} \right)\\ &=O\left({\la^{-\frac{n-2}2}}\left\|h\W_{t,\tau}\right\|_{L^{\frac{2n}{n+2}}(M)} \right)
+	O\left(\la^{-\frac{n+2}2}\left\|h{\W_{t,\tau}^p}\right\|_{L^{\frac{2n}{n+2}}(M)} \right),
\end{align*}
with
$$
\left\|h  \W_{t,\tau} \right\|_{L^{\frac{2n}{n+2}}(M)}=\left\lbrace
\begin{array}{ll}
\displaystyle O\left( \mu^{\frac{n-2}{2}}\right)&\mbox{if }n<10+2\alpha,\\
\displaystyle O\left( \mu^{\frac{n-2}{2}} |\ln\mu|^{\frac{n+2}{2n}} \right)&\mbox{if }n=10+2\alpha,\\
\displaystyle O\left( \mu^{4+\alpha}  \right)&\mbox{if }n>10+2\alpha,
\end{array}\right.
$$
and
$$
\left\|h \W_{t,\tau}^p \right\|_{L^{\frac{2n}{n+2}}(M)}=\left\lbrace
\begin{array}{ll}
\displaystyle O\left( \mu^{2+\alpha}\right)&\mbox{if }2\alpha<n-2,\\
\displaystyle O\left( \mu^\frac{n+2}{2}  |\ln\mu|^\frac{n+2}{2n}\right)&\mbox{if } 2\alpha=n-2,\\
\displaystyle O\left( \mu^\frac{n+2}{2}  \right)&\mbox{if } 2\alpha>n-2.
\end{array}\right.
$$
Now, let us estimate $E_1$.
In the first two cases, we argue exactly as in Lemma 3.1 in \cite{espi} and we deduce that
$$
\|E_1\|_{L^{\frac{2n}{n+2}}(M)}=\left\lbrace
\begin{array}{ll}
\displaystyle 
O\left(\frac{\mu^{\frac{n-2}2}}{\la^{\frac{n-2}2}}\right)&\mbox{if } 3\le n\le 7,\\
\displaystyle O\left(\frac{\mu^3|\ln \mu|^{\frac58}}{\la^{3}}\right)&\mbox{if }n=8,\\
\displaystyle O\left(\frac{\mu^3}{\la^{\frac{7}2}}\right)&\mbox{if } n=9,\\
\displaystyle O\left(\frac{\mu^{2\frac{n+2}{ n-2}}}{\la^{\frac{n-2}2}}\right)&\mbox{if } n\geq 10,\\
\end{array}\right.
$$
In the third case, arguing exactly as in Lemma 7.1 of \cite{rove} we get
$$
\|E_1\|_{L^{\frac{2n}{n+2}}(M)} = O\left(\frac{\mu^{\frac{n-2}2}}{\la^{\frac{n-2}2}}\right).
$$
Collecting all the previous estimates we get the claim.
\end{proof}

\subsection{The linear theory}\label{lin-proof}
\begin{proof}[Proof of Lemma \ref{PropLinearTheory}]

We prove \eqref{LinTheEstimate} by contradiction. If the statement were false, there would exist sequences $(\lambda_m)_{m\in \mathbb N},$ $ (t_m)_{m\in \mathbb N}$, $(\tau_m)_{m\in \mathbb{N}}$ such that (up to subsequence) $\lambda_m \downarrow 0$, $\frac{\mu_m}{\la_m}\downarrow 0$, $t_m\to t_0>0$ and $\tau_m\to \tau_0\in\rr^n$ and functions $\phi_m$, $\ell_m$ with $\|\phi_m\|_{H_g^1(M)}=1$, 
$\|\ell_m\|_{L^{\frac{2n}{n+2}}}\to 0$, such that for scalars $c_i^m$ one has
\beq\label{LinearTheory1}
\left \lbrace
\begin{array}{rcll}
L(\phi_m)&=&\displaystyle \ell_m+\sum_{i=0}^nc_i^mZ_{i,t_m,\tau_m}&\mbox{on }M,\\
\displaystyle \int_M \phi_m Z_{i,t_m,\tau_m}d\nu_g&=& 0, &\mbox{for all }i=0,\dots,n.
\end{array}\right.
\eeq
We change variable setting $y=\frac{\exp_{\xi_m }^{-1}(x)}{\mu_m}-\tau_m.$ We remark that $d_g(x,\xi)=|\exp_{\xi }^{-1}(x)|$ and we set
$$
\tilde \phi_m(y)=\mu_m^{\frac{n-2}2}\chi(\mu_m|y+\tau_m|)\phi_m\left(
\exp_{\xi_m}(\mu_m (y+\tau_m))\right)\quad y\in \rr^n.$$
Since $\|\phi_m\|_{H_g^1(M)}=1$, we deduce   that the scaled function $(\tilde \phi_m)_m$ is bounded in $D^{1,2}(\rr^n)$. Up to subsequence, $\tilde \phi_m$ converges weakly to a function $\tilde \phi\in D^{1,2}(\rr^n)$  and thus in $L^{p+1}(\rr^n)$ due to the continuity of the embedding of $D^{1,2}(\rr^n)$ into $L^{p+1}(\rr^n)$.

\medskip
{\textbf{Step 1:}} We show that $c_i^m\to 0$ as $m\to\infty$ for all $i=0,\dots,n$.

\medskip
We test \eqref{LinearTheory1} against $Z_{i,t_m,\tau_m}$. Integration by parts gives
\begin{equation}\label{TestZi}
\begin{split}
\int_M \langle \nabla_g \phi_m,\nabla_g Z_{i,t_m,\tau_m}\rangle_gd\nu_g &+\int_M \left[R_g-(\lambda_m^2+h)f'(\UU_{\la_m})\right] \phi_m Z_{i,t_m,\tau_m}d\nu_g\\
&=\int_M \ell_m Z_{i,t_m,\tau_m}d\nu_g+\sum_{j=0}^n c_i^m\int_M Z_{j,t_m,\tau_m}Z_{i,t_m,\tau_m}d\nu_g.
\end{split}
\end{equation}
Observe that 
$$
\left|\int_M \ell_m Z_{i,t_m,\tau_m}d\nu_g\right|\leq \|\ell_m\|_{L^{\frac{2n}{n+2}}(M)}\|Z_{i,t_m,\tau_m}\|_{L^{\frac{2n}{n-2}}(M)}=o(1).
$$ 
By change of variables we have
$$
c_j^m\int_M Z_{j,t_m,\tau_m}Z_{i,t_m,\tau_m}d\nu_g=c_j^m\int_{\rr^n}Z_jZ_idy+o(1)=c_j^m\delta_{ij}\int_{\rr^n}Z_j^2dy+o(1),
$$
where $\delta_{ij}=1$ if $i=j$ and $0$ otherwise.

Writing $\tilde h(y)=h\left(\exp_{\xi_m}(\mu_m (y+\tau_m))\right)$, note also that
\begin{equation*}
\begin{split}
R_g\int_M \phi_m Z_{i,t_m,\tau_m}d\nu_g&+\int_Mhf'(\UU_{\lambda_m})\phi_m Z_{i,t_m,\tau_m}d\nu_g\\&= R_g\mu_m^2\int_{\rr^n}\tilde \phi_mZ_idy+\int_{\rr^n}\frac{\tilde h}{\lambda_m^2} \tilde \phi_m f'(U)Z_idy+o(1).
\end{split}
\end{equation*}
On the other hand, standard computations show that
\begin{align*}
\int_M \langle \nabla_g \phi_m,\nabla_g Z_{i,t_m,\tau_m}\rangle_g d\nu_g & -\la_m^2\int_M f'(\UU_{\lambda_m})\phi_m Z_{i,t_m,\tau_m}d\nu_g\\
&=\int_{\rr^n}\nabla \tilde \phi_m \cdot \nabla Z_idy-\int_{\rr^n}f'(U)\tilde \phi_m Z_idy+o(1)\\
&=-\int_{\rr^n}(\Delta Z_i+f'(U)Z_i)\tilde \phi_mdy+o(1).
\end{align*}
Since $Z_i$ satisfies $-\Delta Z_i=f'(U)Z_i$ in $\rr^n$, passing to the limit into \eqref{TestZi} yields
$$
\sum_{j=0}^n\lim_{m\to \infty}c_i^m \delta_{ij}\int_{\rr^n}Z_j^2dy=o(1).
$$
Hence $\lim_{m\to \infty} c_i^m=0$, for all $i=0,\dots, n$.

\medskip
{\textbf{Step 2:}} We show that $\tilde \phi\equiv 0$.

\medskip
Given any smooth function $\tilde \psi$ with compact support in $\rr^n$ we define $\psi$ by the relation
$$
\psi (x)=\mu_m^{-\frac{n-2}2}\chi(d_g(x,\xi))\tilde\psi\left(\frac{\exp^{-1}_{\xi_m}(x)}{\mu_m}-\tau_m\right)\quad x\in M.
$$
We test \eqref{LinearTheory1} against $\psi$. Integration by parts gives
\begin{equation}\label{Testpsi}
\begin{split}
\int_M \langle \nabla_g \phi_m,\nabla_g \psi\rangle_gd\nu_g &+\int_M \left[R_g-(\lambda_m^2+h)f'(\UU_{\la_m})\right] \phi_m \psi d\nu_g\\&=
\int_M \ell_m \psi d\nu_g+\sum_{j=0}^n c_i^m\int_M Z_{j,t_m,\tau_m}\psi d\nu_g.
\end{split}
\end{equation}
By {\textbf{Step 1}} it is easy to see that
\begin{equation}\label{RHSpsi}
\int_M \ell_m \psi d\nu_g+\sum_{j=0}^n c_i^m\int_M Z_{j,t_m,\tau_m}\psi d\nu_g\to 0,\quad \mbox{as }m\to \infty.
\end{equation}
On the other hand, by the same arguments given in the proof of {\bf Step 1}, we have 
\begin{equation*}
\begin{split}
\int_M \langle \nabla_g \phi_m,\nabla_g \psi\rangle_g d\nu_g+\int_M & \left[R_g-(\lambda_m^2+h) f'(\UU_{\la_m})\right] \phi_m \psi d\nu_g \\& \to \int_{\rr^n}\nabla \tilde \phi \cdot \nabla \tilde \psi dy -\int_{\rr^n}f'(U)\tilde \phi\tilde \psi dy
\end{split}
\end{equation*}
as $m\to \infty$. Hence, passing to the limit into \eqref{Testpsi} and integrating by parts we get
$$
-\int_{\rr^n}(\Delta \tilde \phi+f'(U)\tilde \phi)\tilde \psi dy=0,\quad \mbox{for all } \tilde \psi \in C_c^\infty(\rr^n).
$$
We conclude that $\tilde \phi$ is a solution in $D^{1,2}(\rr^n)$ to $-\Delta v=f'(U)v$ in $\rr^n$. Thus $\tilde \phi=\sum_{j=0}^n \alpha_j Z_j$, for certain scalars $\alpha_j$. But
$$
0=\int_M \phi_m Z_{i,t_m,\tau_m}d\nu_g=\int_{\rr^n} \tilde \phi_m Z_id\nu_g\quad \mbox{for all }i=0,\dots,n.
$$
Passing to the limit we get $\int_{\rr^n}\tilde \phi Z_id\nu_g=0$ for $i=0,\dots,n$, which implies $\alpha_i=0$ for all $i$. 

\medskip
{\textbf{Step 3:}} We show that, up to subsequence, $\phi_m\rightharpoonup 0$ in $H_g^1(M)$.

\medskip
Since $(\phi_m)_m$ is bounded in $H_g^1(M)$, up to subsequence, $\phi_m$ converges weakly to a function $\phi\in H_g^1(M)$, and thus in $L^{p+1}(M)$ due to the continuity of the embedding of $H_g^1(M)$ into $L^{p+1}(M)$. Moreover, $\phi_m\to \phi$ strongly in $L^2(M)$. 

We test equation \eqref{LinearTheory} against a function $\psi\in H_g^1(M)$. Integration by parts gives \eqref{Testpsi}.
Once again, by {\textbf{Step 1}} it is easy to see that \eqref{RHSpsi} holds. By weak convergence
\begin{equation*}
\begin{split}
\int_M \langle \nabla_g \phi_m,\nabla_g \psi\rangle_gd\nu_g &+ R_g\int_M \phi_m\psi d\nu_g \\&\to \int_M \langle \nabla_g \phi,\nabla_g \psi\rangle_gd\nu_g+R_g\int_M \phi \psi d\nu_g,\quad  \mbox{as } m\to \infty.
\end{split}
\end{equation*} 

{\textbf{Claim:}} $\displaystyle -\int_M (\lambda_m^2+h)f'(\UU_{\la_m})\phi_m\psi d\nu_g \to \int_M hf'(u_0)\phi \psi d\nu_g$ as $m\to \infty$.

Assuming the claim is true, passing to the limit into \eqref{Testpsi} gives
$$
\int_M \langle \nabla_g \phi,\nabla_g \psi\rangle_gd\nu_g+R_g\int_M \phi \psi d\nu_g+\int_M hf'(u_0)\phi \psi d\nu_g=0.
$$
Elliptic estimates show that $\phi$ is a classical solution to $-\Delta_g \phi+R_g\phi =+h f'(u_0)\phi$ on $M$. Lemma \ref{nonde} yields $\phi\equiv 0$. 

\medskip
{\textbf{Proof of the claim:}} Note that 
$$
\lambda_m^2\int_M f'(\UU_{\la_m})\phi_m\psi d\nu_g=\lambda_m^2\int_M [f'(\UU_{\la_m})-f'(u_0)]\phi_m\psi d\nu_g + \lambda_m^2\int_M f'(u_0)\phi_m\psi d\nu_g.
$$
We have 
$$
\left|\lambda_m^2 \int_M f'(u_0)\phi_m\psi d\nu_g\right|\newline\leq \lambda_m^ 2\|f'(u_0)\|_{L^\infty(M)}\|\phi_m\|_{L^2(M)}\|\psi\|_{L^2(M)}\to 0,\quad \mbox{as }m\to \infty.
$$
We define
$$
\tilde \psi (y)=\mu_m^{\frac{n-2}2}\chi(\mu_m |y+\tau_m|)\psi(\exp_{\xi_m}(\mu_m(y+\tau_m)))\quad y\in \rr^n.
$$
By \eqref{firstaprox} and change of variables we have
\begin{equation*}
\begin{split}
\Bigg|\lambda_m^2 \int_M [f'(\UU_{\la_m})&-f'(u_0)]\phi_m\psi d\nu_g\Bigg| \\ &\leq C \int_{\rr^n} \frac1{(1+|y|^2)^2}|\tilde \phi_m(y)||\tilde\psi(y)|dy\\
&\leq C \left\|\frac1{(1+|\cdot|^2)^2}\right\|_{L^\frac{n }{2-\epsilon\frac{(n-2)}2}(\rr^n)}\|\tilde \phi_m\|_{L^{\frac{2n}{n-2}\frac1{1+\epsilon}}(\rr^n)}\|\tilde \psi\|_{L^{\frac{2n}{n-2}}(\rr^n)},
\end{split}
\end{equation*}
for $0<\epsilon \ll 1$. 
Note that $\|\tilde \psi\|_{L^{\frac{2n}{n-2}}(\rr^n)}\leq C\|\psi\|_{L^{\frac{2n}{n-2}}(M)}$. By {\textbf{Step 2}} $\|\tilde \phi_m\|_{L^{\frac{2n}{n-2}\frac1{1+\epsilon}}(\rr^n)}\to 0$ as $m\to \infty$, since $\frac{2n}{n-2}\frac1{1+\epsilon}<\frac{2n}{n-2}$ . Thus 
$$
\lambda_m^2\int_M f'(\UU_{\la_m})\phi_m\psi d\nu_g\to 0,\quad \mbox{as }m\to \infty.
$$
On the other hand
$$
\int_M h f'(\UU_{\la_m})\phi_m\psi d\nu_g=\int_M h[f'(\UU_{\la_m})-f'(u_0)]\phi_m\psi d\nu_g+ \int_M hf'(u_0)\phi_m\psi d\nu_g.
$$
Dominated convergence theorem yields
$$
\int_M hf'(u_0)\phi_m\psi d\nu_g\to \int_M hf'(u_0)\phi\psi d\nu_g,\quad \mbox{as }m\to \infty.
$$
By \eqref{firstaprox} and change of variables we have
$$
\left|\int_M h[f'(\UU_{\la_m})-f'(u_0)]\phi_m\psi d\nu_g\right|\leq C \frac{\mu_m^2}{\la_m^2} \int_{\rr^n} \frac{|y|^2}{(1+|y|^2)^2}|\tilde \phi_m(y)||\tilde\psi(y)|dy
$$
We have
\begin{align*}
\frac{\mu_m^2}{\la_m^2}\int_{\rr^n} \chi(\mu_m|y|)\frac{|y|^2}{(1+|y|^2)^2}|&\tilde \phi_m(y)||\tilde\psi(y)|dy
\\
&\leq C \frac{\mu_m^2}{\la_m^2} \left\|\frac{\chi(\mu_m|\cdot|)}{(1+|\cdot|^2)}\right\|_{L^{\frac n2}(\rr^n)}\|\tilde \phi_m\|_{L^{\frac{2n}{n-2}}(\rr^n)}\|\tilde \psi\|_{L^{\frac{2n}{n-2}}(\rr^n)}\\
&\leq C \frac{\mu_m^2}{\la_m^2} |\ln \mu_m|\|\tilde \phi_m\|_{L^{\frac{2n}{n-2}}(\rr^n)}\|\tilde \psi\|_{L^{\frac{2n}{n-2}}(\rr^n)}
\end{align*}
By \textbf{Step 2} and our choice of $\mu_m$ in terms of $\la_m$ (see \eqref{parameters}) we conclude that
$$
\left|\int_M h[f'(\UU_{\la_m})-f'(u_0)]\phi_m\psi d\nu_g \right|,\quad \mbox{as }m\to \infty.
$$
The claim is thus proved.

\medskip
{\textbf{Step 4:}} We show that $\|\phi_m\|_{H_g^1(M)}\to 0$.

\medskip
We take in \eqref{Testpsi} $\psi=\phi_m$. We get
\begin{equation}\label{Testphi}
\begin{split}
\int_M |\nabla_g \phi_m|_g^2d\nu_g &+\int_M \left[R_g-(\lambda_m^2+h)f'(\UU_{\la_m})\right] \phi_m^2d\nu_g \\&=
\int_M \ell_m \phi_md\nu_g+\sum_{j=0}^n c_i^m\int_M Z_{j,t_m,\tau_m}\phi_md\nu_g.
\end{split}
\end{equation}
By \textbf{Step 1--3}, passing to the limit into \eqref{Testphi} gives
$$
\lim_{m\to\infty} \int_M |\nabla_g \phi_m|_g^2d\nu_g=0.
$$
Since $\phi_m \rightharpoonup 0$ in $H_g^1(M)$, we conclude
$$
\|\phi_m\|_{H_g^1(M)}\to 0,\quad \mbox{as }m\to \infty,
$$
which yields a contradiction with the fact that $\|\phi_m\|_{H_g^1(M)}=1$. This concludes the proof of \eqref{LinTheEstimate}.

\medskip
The existence and uniqueness of $\phi_\la$ solution to Problem \ref{LinearTheory} follows from the Fredholm alternative. 
This finishes the proof of Lemma \ref{IntermediateProp}.
\end{proof}

\subsection{The reduced energy}\label{nonlin-proof}
\begin{proof}[Proof of Lemma \ref{IntermediateProp}]
The result of Proposition \ref{PropLinearTheory} implies that the unique solution $\phi_\la=T_{t,\tau}(\ell)$ of \eqref{LinearTheory} defines a continuous linear map $T_{t,\tau}$ from the space $L^{\frac{2n}{n+2}}(M)$ into $H_g^1(M)$. Moreover, a standard argument shows that the operator $T_{t,\tau}$ is continuously differentiable with respect to $t$ and $\tau$.

In terms of the operator $T_{t,\tau}$, Problem \ref{IntermediateProblem} becomes
$$
\phi_\la=T_{t,\tau}(-E+(\la^2+h)N(\phi_\la))\eqqcolon A(\phi_\la).
$$
We define the space 
$$
H=\left\{\phi \in H_g^1(M)\; \vline \; \int_M \phi Z_{i,t,\tau}d\nu_g= 0,\, \mbox{for all }i=0,\dots,n  \right\}.
$$
For any positive real number $\eta$, let us consider the region
$$
\mathcal{F}_\eta\equiv \left\{\phi\in H \; \vline \; \|\phi\|_{H_g^1(M)}\leq \eta \|E\|_{L^{\frac{2n}{n+2}}(M)} \right\}.
$$
From \eqref{LinTheEstimate}, we get
$$
\|A(\phi_\la)\|_{H_g^1(M)}\leq C \left(\|E\|_{L^{\frac{2n}{n+2}}(M)}+ \|N(\phi_\la)\|_{L^{\frac{2n}{n+2}}(M)} \right).
$$
Observe that
$$
\|N(\phi_\la)\|_{L^{\frac{2n}{n+2}}(M)}\leq C \|\phi_\la\|_{L^{\frac{2n}{n-2}}(M)}^p\leq C \|\phi_\la\|_{H_g^1(M)},
$$
and
$$
\|N(\phi_1)-N(\phi_2)\|_{L^{\frac{2n}{n+2}}(M)} \leq C \eta^{p-1}\|E\|_{L^{\frac{2n}{n-2}}(M)}^{p-1}\|\phi_1-\phi_2\|_{H_g^1(M)},
$$
for $\phi_1,\phi_2 \in \mathcal{F}_\eta$.
By \eqref{estimatePhiError}, we get
$$
\|A(\phi_\la)\|_{H_g^1(M)} \leq  C\|E\|_{L^{\frac{2n}{n-2}}(M)}\left(\eta^p \|E\|_{L^{\frac{2n}{n-2}}(M)}^{p-1}+1\right),
$$
and
$$
\|A(\phi_1)-A(\phi_2)\|_{H_g^1(M)}\leq  C \eta^{p-1}\|E\|_{L^{\frac{2n}{n-2}}(M)}^{p-1}\|\phi_1-\phi_2\|_{H_g^1(M)},
$$
for $\phi_1,\phi_2 \in \mathcal{F}_\eta$.

Since $p-1\in (0,1)$ for $n\geq 3$ and $\|E\|_{L^{\frac{2n}{n-2}}(M)}\to 0$ as $\lambda\to 0$, it follows that if $\eta$ is sufficiently large and $\lambda_0$ is small enough then $A$ is a contraction map from $\mathcal{F}_\eta$ into itself, and therefore a unique fixed point of $A$ exists in this region.

Moreover, since $A$ depends continuously (in the $L^{\frac{2n}{n+2}}$-norm) on $t,\tau$ the fixed point characterization obviously yields so for the map $t,\tau\to\phi$. Moreover, standard computations give that the partial derivatives $\partial_t\phi,\partial_{\tau_i}\phi$, $i=1,\dots,n$ exist and define continuous functions of $t,\tau$. Besides, there exists a constant $C>0$ such that for all $i=1,\dots,n$
\beq\label{EstimateDerivates}
\left\|\partial_t \phi\right\|_{H_g^1(M)}+\left\|\partial_{\tau_i}\phi\right\|_{H_g^1(M)}\leq C \left\|E\right\|_{L^{\frac{2n}{n+2}}(M)}+\left\|\partial_t E\right\|_{L^{\frac{2n}{n+2}}(M)}+\left\|\partial_{\tau_i} E\right\|_{L^{\frac{2n}{n+2}}(M)}.
\eeq
That concludes the proof. \end{proof}

\subsection{The reduced energy}\label{ridotto-proof}
It is quite standard to prove that, as $\la\to0$, $J_\la\(\UU_\la+\phi_\la\)=J_\la\(\UU_\la \)+h.o.t.$
 $C^1$-uniformly on compact sets of $(0,+\infty)\times \mathbb R^n$ (see \cite{espive1}). It only remains to compute $J_\la\(\UU_\la\).$
\begin{lemma}\label{cruciale}
Let $a,b\in \rr_+$ be fixed numbers such that $0<a<b$ and let $K$ be a compact set in $\rr^n.$ There exists a positive number $\lambda_0$  such that for any $ \lambda\in(0,\lambda_0)$
the following expansions hold  $C^1-$uniformly with respect to $t\in[a,b]$ and $\tau\in K$:
\begin{enumerate}[leftmargin=*]
\item[(a)]
if  $n\ge10$, $|\textnormal{Weyl}_g(\xi)|\neq0$, and \eqref{mu1} holds, we have
$$
J_\la(\UU_\la)=A_0- \la^\frac{2(n+2)-\alpha(n-2)}{\alpha-2}\left[A_1|\textnormal{Weyl}_g(\xi)|_g^2t^4-A_2t^{2+\alpha}\sum\limits_{i=1}^n\int\limits_{\rr^n}a_i \frac{|y_i+\tau_i|^{2+\alpha}}{(1+|y|^2)^n}dy+o(1)\right],
$$
\item[(b)]
if one of the following conditions is satisfied:
\begin{itemize}
\item[(i)] $3\le n\le 5$ and \eqref{mu1} holds;
\item[(ii)] $6\le n\le9$ and \eqref{mu9} holds; 
\item[(iii)] $n\ge10,$ $(M,g)$ is locally conformally flat, and \eqref{mu3} holds;
\end{itemize}
then 
$$
J_\la(\UU_\la)=A_0- \la^\frac{(n-2-\alpha)(n-2)}{2\alpha-n+6 }\left[A_3u_0(\xi)t^\frac{n-2}2-A_2t^{2+\alpha}\sum\limits_{i=1}^n\int\limits_{\rr^n}a_i\frac{|y_i+\tau_i|^{2+\alpha}}{(1+|y|^2)^n}dy+o(1)\right].
$$
\end{enumerate}
Here $A_1,$ $A_2$ and $A_3$ are constants only depending on $n$ and $A_0$ is defined in \eqref{a0}.
\end{lemma}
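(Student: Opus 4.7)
The plan is to directly compute $J_\la(\UU_\la)$ by expanding $\UU_\la = u_0 + \la^{-(n-2)/2}\W_{t,\tau}$ into quadratic and nonlinear parts, then to isolate the surviving leading-order contributions in each regime. For the quadratic part, the cross term $\int_M\<\nabla_g u_0,\nabla_g\W\>_g\, d\nu_g + c(n)R_g\int_M u_0\W\, d\nu_g$ is handled by integrating by parts and using $\mathcal L_g u_0 = hu_0^p$; this cross term cancels exactly against the linear-in-$\W$ piece produced by Taylor-expanding $h\UU_\la^{p+1}/(p+1)$ via Lemma \ref{yyl}. The pure-$u_0$ contributions together with the $\la^2$ term assemble into the constant $A_0$ of \eqref{a0}. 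What remains is the pure bubble energy (measured in the metric $g$), the principal interaction $\la^{-(n+2)/2}(p+1)^{-1}\int_M h\W^{p+1}\,d\nu_g$, and a quadratic-in-$\W$ cross term concentrated at $\xi$.

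After rescaling $y = \mu^{-1}\exp_\xi^{-1}(x) - \tau$, three non-trivial pieces survive. First, the pure bubble energy in geodesic normal coordinates, expanded via $g^{ij}(y) = \delta^{ij} - \tfrac13 R_{iabj}(\xi)y_ay_b + O(|y|^3)$: in case (a) this produces the Weyl contribution $A_1|\textnormal{Weyl}_g(\xi)|_g^2 t^4$ once the correction $\mu^2\mathcal V$, designed precisely to absorb the $O(\mu^2)$ residue of \eqref{conf-eu}, is taken into account, following the calculation of \cite{espi}; in case (b)(iii) the conformal change to the locally flat $g_\xi$ kills all such corrections inside $B_{g_\xi}(\xi,r)$, while in cases (b)(i)--(ii) they are of lower order than the surviving interaction with $u_0$. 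Second, the $u_0$--bubble interaction: after rescaling, the dominant surviving cross term evaluates to a universal constant times $u_0(\xi)\mu^{(n-2)/2}/\la^{(n-2)/2}$, yielding the $A_3 u_0(\xi) t^{(n-2)/2}$ term that drives case (b). Third, the flatness contribution: substituting $h(y) = -\sum_i a_i|y_i|^{2+\alpha} + R(y)$ and using $U^{p+1}(y) = \alpha_n^{p+1}(1+|y|^2)^{-n}$, the principal term reads
$$
-\frac{\mu^{2+\alpha}}{(p+1)\la^{(n+2)/2}}\sum_{i=1}^n a_i\int_{\rr^n}\frac{\alpha_n^{p+1}|y_i+\tau_i|^{2+\alpha}}{(1+|y|^2)^n}dy,
$$
which yields the $A_2 t^{2+\alpha}$ coefficient once $\mu = t\la^\beta$ is inserted.

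Substituting the prescribed $\mu = t\la^\beta$ from \eqref{mu1}--\eqref{mu3} and factoring out the common power of $\la$ produces the four announced expansions. The remainder $R$ in the flatness expansion is $o(1)$ uniformly in $(t,\tau)$ by splitting $\rr^n$ into $\{|y|\le\delta/\mu\}$, where one exploits $R(y)|y|^{-\gamma}\to 0$, and its complement, where the decay of $U$ at infinity takes over. $C^1$-uniformity in $(t,\tau)$ is obtained by differentiating the same expansions, since $\partial_t\W$ and $\partial_{\tau_i}\W$ admit analogous pointwise bounds; the passage from $J_\la(\UU_\la)$ to $J_\la(\UU_\la+\phi_\la)$ is absorbed into $o(1)$ by \eqref{estimatePhiError} and its differentiated analogue \eqref{EstimateDerivates}. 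The main technical obstacle is case (a), namely verifying that the $O(\mu^2)$ geometric residue in \eqref{conf-eu} is cancelled by the $\mu^2\mathcal V$ correction so that the Weyl term surfaces cleanly as the leading $\mu^4$ coefficient with its universal constant $A_1$; this is the delicate cancellation of \cite{espi} inherited through our ansatz \eqref{w2}. A secondary difficulty is the bookkeeping of which among the many $O(\mu^k/\la^l)$ subleading contributions actually dominates in each dimensional and flatness regime, which is precisely what dictates the admissible ranges of $\alpha$ in the statement.
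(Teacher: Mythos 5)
Your overall strategy — decompose $J_\la(\UU_\la)$ by Taylor-expanding around $u_0$, cancel the linear-in-$\W$ cross term against the quadratic cross term via $\mathcal L_g u_0=hu_0^p$, isolate the three surviving leading pieces (pure bubble energy handled by \cite{espi} or \cite{rove}, the $u_0$--bubble interaction $\int\W^p u_0$, and the flatness term $\int h\W^{p+1}$), rescale, and substitute $\mu=t\la^\beta$ — is exactly the route the paper takes in Subsection \ref{ridotto-proof}. So the approach is essentially the same, not a genuinely different one.

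However, there is a concrete arithmetic slip in your flatness term that would derail the bookkeeping if carried through. You write the principal interaction as $\la^{-(n+2)/2}(p+1)^{-1}\int_M h\W^{p+1}d\nu_g$ and the rescaled term as $\frac{\mu^{2+\alpha}}{(p+1)\la^{(n+2)/2}}\sum_i a_i\int \alpha_n^{p+1}|y_i+\tau_i|^{2+\alpha}(1+|y|^2)^{-n}dy$. The correct prefactor is $\la^{-n}$, not $\la^{-(n+2)/2}$: one has $\UU_\la - u_0 = \la^{-(n-2)/2}\W$, so the $\W^{p+1}$ contribution carries $\la^{-(n-2)(p+1)/2}=\la^{-n}$ (you seem to have taken only $p$ factors instead of $p+1$, giving $\la^{-(n-2)p/2}=\la^{-(n+2)/2}$). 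With your exponent, substituting $\mu=t\la^{2/(\alpha-2)}$ does \emph{not} reproduce the announced power $\la^{2(n+2)-\alpha(n-2))/(\alpha-2)}$; with $\la^{-n}$ it does, since $\beta(2+\alpha)-n=\frac{2(n+2)-\alpha(n-2)}{\alpha-2}$ when $\beta=\frac{2}{\alpha-2}$, and similarly in case (b) one finds $\beta(2+\alpha)-n=\frac{(n-2)(n-2-\alpha)}{2\alpha-n+6}$ when $\beta=\frac{n+2}{2\alpha-n+6}$, matching the $u_0$-interaction power $\frac{(n-2)(\beta-1)}{2}$. Beyond this slip, be aware that the bulk of the paper's proof — and where the admissibility ranges for $\alpha$ actually arise — is the case-by-case verification, via Lemma~\ref{yyl} and the splitting of integrals over $B_g(\xi,\sqrt\mu)$ and its complement, that all the remaining $O(\mu^k/\la^l)$ terms are genuinely $o$ of the two leading scales. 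You acknowledge this as "a secondary difficulty" but do not carry it out; in the paper this is the heart of the argument, not a footnote.
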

\begin{proof} 
We prove the $C^0-$estimate. The $C^1-$estimate can be carried out in a similar way (see \cite{espive1}). 

Let us first prove  (a) and (ii) and (iii) of (b).
It is useful to recall that
$$
\alpha<\frac{2n}{n-2}<n-2\ \hbox{if}\ n\ge10,\quad \alpha<\frac{n^2-6n+16}{2(n-2)}<n-2\ \hbox{if}\ n\ge 6.
$$
Observe that 
\begin{align*}
J_\la(\UU_\la) &=\underbrace{\frac12 \int_M |\nabla_g u_0|_g^2d\nu_g+\frac12\int_M c(n)R_gu_0^2d\nu_g+\frac1{p+1}\int_M h  u_0 ^{p+1}d\nu_g}_{\hbox{independent on $\mu$ and $\tau$}}\\ &+\underbrace{\la^{-(n-2)}\left[\frac12 \int_M |\nabla_g \W_{t,\tau}|_g^2d\nu_g+\frac12 \int_M c(n)R_g\W_{t,\tau}^2d\nu_g-\frac1{p+1}\int_M\W_{t,\tau}^{p+1}d\nu_g\right]}_{\hbox{leading term in case (a)}}\\
&+\underbrace{\la^{-\frac{n-2}2}\left[\int_M \langle\nabla_g \W_{t,\tau},\nabla_g u_0\rangle d\nu_g+\int_M c(n)R_g \W_{t,\tau}u_0d\nu_g\right]-\la^{-\frac{n-2}2} \int_Mh \W_{t,\tau}u_0^pd\nu_g}_{=0}\\
&-\la^{-(n-2)}\frac1{p+1}\int_M\[\left(\W_{t,\tau}+\la^{\frac{n-2}2}u_0\)^{p+1}-\W_{t,\tau}^{p+1}-\(\la^{\frac{n-2}2}u_0\)^{p+1}\right.\\ &\hskip4truecm\left.- (p+1)\W_{t,\tau}^{p } \(\la^{\frac{n-2}2}u_0\)- (p+1)\W_{t,\tau} \(\la^{\frac{n-2}2}u_0\)^{p }\right]d\nu_g\\
&+\underbrace{\la^{2}\frac1{p+1}\int_Mu_0 ^{p+1}d\nu_g}_{\hbox{independent of $\mu$ and $\tau$}}\\
&-\underbrace{\la^{-(n-2)}\int_M\W_{t,\tau}^{p } \(\la^{\frac{n-2}2}u_0\)d\nu_g}_{\hbox{leading term in case (b)}}\\
&-\la^{-(n-2)}\int_M\W_{t,\tau} \(\la^{\frac{n-2}2}u_0\)^{p }d\nu_g\\
&+\la^{-n}\frac1{p+1}\int_M h\[ \left(\W_{t,\tau}+\la^{\frac{n-2}2}u_0\)^{p+1}- \W_{t,\tau} ^{p+1}-\( \la^{\frac{n-2}2}u_0\)^{p+1}\right.\\ &\hskip4truecm\left.- (p+1)\W_{t,\tau}^p\(\la^{\frac{n-2}2}u_0\) -(p+1)\W_{t,\tau}\(\la^{\frac{n-2}2}u_0\)^{p}\right]d\nu_g\\
&+\underbrace{\la^{-n}\frac1{p+1}\int_M h\W_{t,\tau} ^{p+1}d\nu_g}_{\hbox{leading term in every case}}\\
&+\la^{-n} \int_M h\W_{t,\tau}^p\(\la^{\frac{n-2}2}u_0\)d\nu_g
\end{align*}
Concerning the leading terms, we need to distinguish two cases.
If the manifold is not locally conformally flat, by Lemma 3.1 in \cite{espi} we deduce
\begin{equation}\label{l1}
\begin{split}
\la^{-(n-2)}&\left[\frac12 \int_M |\nabla_g \W_{t,\tau}|_g^2d\nu_g+ \frac12 \int_M c(n)R_g\W_{t,\tau}^2d\nu_g-\frac1{p+1}\W_{t,\tau}^{p+1}d\nu_g\right]\\ &=\left\{\begin{array}{ll}
\la^{-(n-2)}\left[A(n)-B(n)|\mbox{Weyl}_g(\xi)|_g^2\mu^4+o(\mu^4)\right] & \hbox{if}\ n\ge7,\\
\la^{-(n-2)}\left[A(n)-B(n)|\mbox{Weyl}_g(\xi)|_g^2\mu^4|\ln\mu|+o(\mu^4|\ln\mu|)\right]& \hbox{if}\ n=6.\\
\end{array}\right.
\end{split}
\end{equation}
If the manifold is locally conformally flat, by Lemma 5.2 in \cite{rove} we get
\begin{equation*} 
 \la^{-(n-2)}\left[\frac12 \left(\int_M |\nabla_g \W_{t,\tau}|_g^2d\nu_g+c(n)R_g\W_{t,\tau}^2\right) d\nu_g-\frac1{p+1}\W_{t,\tau}^{p+1}d\nu_g\right] =A(n)+O\(\frac{\mu^{n-2}}{\la^{n-2}}\).
\end{equation*}
Here $A(n)$and $B(n)$ are positive constants  independent only depending on $n.$
 Moreover,
 straightforward computations lead to 
\begin{equation}\label{l2}\la^{-(n-2)}\int_M\W_{t,\tau}^{p } \(\la^{\frac{n-2}2}u_0\)d\nu_g=u_0(\xi)\frac{\mu^\frac{n-2}2}{\lambda^\frac{n-2}2}\alpha_n^p\int _{\rr^n}
\frac1{(1+|y|^2)^\frac{n+2}2}dy+o\(\frac{\mu^\frac{n-2}2}{\lambda^\frac{n-2}2}\)\end{equation}
and
\begin{equation}\label{l3} 
 \la^{-n}\frac1{p+1}\int_M h  \W_{t,\tau} ^{p+1}d\nu_g =
\frac{\mu^{2+\alpha}}{\lambda^n}\frac{\alpha_n^{p+1}}{p+1}\sum\limits_{i=1}^n\int_{\rr^n}a_i \frac{|y_i+\tau_i|^{2+\alpha}}{(1+|y|^2)^n}dy +o\(\frac{\mu^{2+\alpha}}{\lambda^n}\),\end{equation}
 because $\alpha<n-2.$
 
\medskip 
Now, if $n\ge10$ and the manifold is not locally conformally flat, we choose $\mu=t \lambda^\frac2{\alpha-2}$   so that the leading terms are \eqref{l1} and \eqref{l3}, namely
$$
\frac{\mu^4}{\lambda^{n-2}}\sim\frac{\mu^{2+\alpha}}{\lambda^n}\quad \hbox{and}\quad \frac{\mu^\frac{n-2}2}{\lambda^\frac{n-2}2}=
o\left(\frac{\mu^4}{\lambda^{n-2}}
 \).
$$
On the other hand, if $6\le n\le 9$ we choose $\mu=t \lambda^\frac{n+2}{2\alpha-n+6}$ so that the leading terms are \eqref{l2} and \eqref{l3}, namely
$$
\frac{\mu^\frac{n-2}2}{\lambda^\frac{n-2}2}\sim
\frac{\mu^{2+\alpha}}{\lambda^n}\quad \hbox{and}\quad \frac{\mu^4}{\lambda^{n-2}}=
o\(\frac{\mu^\frac{n-2}2}{\lambda^\frac{n-2}2}
\), \quad \hbox{provided that}\ \alpha<\frac{16}{n-2}.
$$
The higher order terms are estimated only taking into account that the bubble $\W_{t,\tau} $ satisfies \eqref{sti-crux}.

A simple computation shows that
$$\la^{-(n-2)}\int_M\W_{t,\tau} \(\la^{\frac{n-2}2}u_0\)^{p }d\nu_g=O\(\frac{\mu^\frac{n-2}2}{\lambda^\frac{n-2}2}\lambda^2\int_{B (0,r)} \frac1{|y-\mu \tau|^{n-2}} dy \)=o\(\frac{\mu^\frac{n-2}2}{\lambda^\frac{n-2}2}\) 
$$
and
$$\la^{-n} \int_M h\W_{t,\tau}^p\(\la^{\frac{n-2}2}u_0\)d\nu_g=O\(\frac{\mu^\frac{n+2}2}{\lambda^\frac{n+2}2} \int_{B (0,r)} \frac1{|y-\mu \tau|^{n-\alpha}} dy \)=o\(\frac{\mu^\frac{n-2}2}{\lambda^\frac{n-2}2}\).$$
If $n=6$ then $p+1=3$. It follows that 
 \begin{align*}
&
 \la^{-(n-2)}\frac1{p+1}\int_M\[\left(\W_{t,\tau}+\la^{\frac{n-2}2}u_0\)^{p+1}-\W_{t,\tau}^{p+1}-\(\la^{\frac{n-2}2}u_0\)^{p+1}\right.\\ &\hskip4truecm\left.- (p+1)\W_{t,\tau}^{p } \(\la^{\frac{n-2}2}u_0\)- (p+1)\W_{t,\tau} \(\la^{\frac{n-2}2}u_0\)^{p }\right]d\nu_g=0
 \end{align*}
and
 \begin{align*}
&\la^{-n}\frac1{p+1}\int_M h\[ \left(\W_{t,\tau}+\la^{\frac{n-2}2}u_0\)^{p+1}- \W_{t,\tau} ^{p+1}-\( \la^{\frac{n-2}2}u_0\)^{p+1}\right.\\ &\hskip4truecm\left.- (p+1)\W_{t,\tau}^p\(\la^{\frac{n-2}2}u_0\) -(p+1)\W_{t,\tau}\(\la^{\frac{n-2}2}u_0\)^{p}\right]d\nu_g=0.
\end{align*}
If $n\ge7$, we get
 \begin{align*}
&
 \la^{-(n-2)}\frac1{p+1}\int_M\[\left(\W_{t,\tau}+\la^{\frac{n-2}2}u_0\)^{p+1}-\W_{t,\tau}^{p+1}-\(\la^{\frac{n-2}2}u_0\)^{p+1}\right.\\ &\hskip4truecm\left.- (p+1)\W_{t,\tau}^{p } \(\la^{\frac{n-2}2}u_0\)- (p+1)\W_{t,\tau} \(\la^{\frac{n-2}2}u_0\)^{p }\right]d\nu_g\\
 &=   \la^{-(n-2)}\frac1{p+1}\int_{B_g(\xi,\sqrt\mu)}\[\left(\W_{t,\tau}+\la^{\frac{n-2}2}u_0\)^{p+1}-\W_{t,\tau}^{p+1}-\(\la^{\frac{n-2}2}u_0\)^{p+1}\right.\\ &\hskip4truecm\left.- (p+1)\W_{t,\tau}^{p } \(\la^{\frac{n-2}2}u_0\)- (p+1)\W_{t,\tau} \(\la^{\frac{n-2}2}u_0\)^{p }\right]d\nu_g\\ &+  \la^{-(n-2)}\frac1{p+1}\int_{M\setminus B_g(\xi,\sqrt\mu)}\[\left(\W_{t,\tau}+\la^{\frac{n-2}2}u_0\)^{p+1}-\W_{t,\tau}^{p+1}-\(\la^{\frac{n-2}2}u_0\)^{p+1}\right.\\ &\hskip4truecm\left.- (p+1)\W_{t,\tau}^{p } \(\la^{\frac{n-2}2}u_0\)- (p+1)\W_{t,\tau} \(\la^{\frac{n-2}2}u_0\)^{p }\right]d\nu_g\\
 &=o\(\frac{\mu^\frac{n-2}2}{\lambda^\frac{n-2}2}\)
,\end{align*}
 because by Lemma \ref{yyl} 
\begin{align*}
&\la^{-(n-2)}\frac1{p+1}\int_{B_g(\xi,\sqrt\mu)}\[\left(\W_{t,\tau}+\la^{\frac{n-2}2}u_0\)^{p+1}-\W_{t,\tau}^{p+1}-\(\la^{\frac{n-2}2}u_0\)^{p+1}\right.\\ &\hskip4truecm\left.- (p+1)\W_{t,\tau}^{p } \(\la^{\frac{n-2}2}u_0\)- (p+1)\W_{t,\tau} \(\la^{\frac{n-2}2}u_0\)^{p }\right]d\nu_g\\ 
&=\la^{-(n-2)}\frac1{p+1}\int_{  B_g(\xi,\sqrt\mu)}\[\left(\W_{t,\tau}+\la^{\frac{n-2}2}u_0\)^{p+1}-\W_{t,\tau}^{p+1}- (p+1)\W_{t,\tau}^{p } \(\la^{\frac{n-2}2}u_0\)\right]d\nu_g\\
&-\la^{-(n-2)}\frac1{p+1}\int_{ B_g(\xi,\sqrt\mu)}\(\la^{\frac{n-2}2}u_0\)^{p+1}d\nu_g-\la^{-(n-2)}\int_{ B_g(\xi,\sqrt\mu)}  \W_{t,\tau} \(\la^{\frac{n-2}2}u_0\)^{p }d\nu_g\\ 
&=O\(\la^{-(n-2)}\int_{  B_g(\xi,\sqrt\mu)}\left(\la^{\frac{n-2}2}u_0\)^{p+1} d\nu_g\right)+O\(\la^{-(n-2)}\int_{  B_g(\xi,\sqrt\mu)}\W_{t,\tau}^{p-1}\left(\la^{\frac{n-2}2}u_0\)^{2} d\nu_g\right)\\
&+O\(\la^{-(n-2)}\int_{ B_g(\xi,\sqrt\mu)}  \W_{t,\tau} \left(\la^{\frac{n-2}2}u_0\)^{p }d\nu_g\right)\\
& =O\(\la^2\int_{ B_g(\xi,\sqrt\mu)} u_0 ^{p+1}d\nu_g\)+O\(\mu^2\int_{ B (0,\sqrt\mu)} \frac1{|y-\mu \tau|^4}dy\) \\
&+O\(\frac{\mu^\frac{n-2}2}{\lambda^\frac{n-6}2} \int_{ B (0,\sqrt\mu)} \frac1{|y-\mu \tau|^{n-2}}dy\) \\ 
&=O\(\la^2\mu^\frac{n}2\)+O\(\mu^{2+{\frac{n-4}2}}\)+O\(\frac{\mu^\frac{n-2}2}{\lambda^\frac{n-6}2} \mu \)=o\(\frac{\mu^\frac{n-2}2}{\lambda^\frac{n-2}2}\),
\end{align*}
since if $\mu$ is small enough, for any $q<n$, we have
\beq\label{star}
\int_{ B (0,\sqrt\mu)}  \frac1{|y-\mu \tau|^{q}}dy=\int_{ B (-\mu\tau,\sqrt\mu)}  \frac1{|y|^{q}}dy\le \int_{ B (0,2\sqrt\mu)}  \frac1{|y |^{q}}dy 
=O\(\mu^\frac{n-q}2\)\eeq
and 
\begin{align*}
&
 \la^{-(n-2)}\frac1{p+1}\int_{M\setminus B_g(\xi,\sqrt\mu)}\[\left(\W_{t,\tau}+\la^{\frac{n-2}2}u_0\)^{p+1}-\W_{t,\tau}^{p+1}-\(\la^{\frac{n-2}2}u_0\)^{p+1}\right.\\ &\hskip4truecm\left.- (p+1)\W_{t,\tau}^{p } \(\la^{\frac{n-2}2}u_0\)- (p+1)\W_{t,\tau} \(\la^{\frac{n-2}2}u_0\)^{p }\right]d\nu_g\\ &=
  \frac{\la^{-(n-2)}}{p+1}\int_{M\setminus B_g(\xi,\sqrt\mu)}\[\left(\W_{t,\tau}+\la^{\frac{n-2}2}u_0\)^{p+1}-\(\la^{\frac{n-2}2}u_0\)^{p+1}- (p+1)\W_{t,\tau} \(\la^{\frac{n-2}2}u_0\)^{p }\right]d\nu_g\\ & - \frac{\la^{-(n-2)}}{p+1}\int_{M\setminus B_g(\xi,\sqrt\mu)}\W_{t,\tau}^{p+1}d\nu_g-  \la^{-(n-2)} \int_{M\setminus B_g(\xi,\sqrt\mu)} \W_{t,\tau}^{p } \(\la^{\frac{n-2}2}u_0\)d\nu_g \\ &
 =O\(  \la^{-(n-2)} \int_{M\setminus B_g(\xi,\sqrt\mu)}\W_{t,\tau}^{2}\left(\la^{\frac{n-2}2}u_0\)^{p-1}d\nu_g\right)+O\(\frac{\la^{-(n-2)}}{p+1}\int_{M\setminus B_g(\xi,\sqrt\mu)}\W_{t,\tau}^{p+1}d\nu_g\)\\ &+O\(\la^{-\frac{n-2}2} \int_{M\setminus B_g(\xi,\sqrt\mu)} \W_{t,\tau}^{p } d\nu_g  \)\\ &
 =O\(  \la^{-(n-4)}\mu^\frac{n}2\)+O\(- \la^{-(n-2)}\mu^\frac{n}2\)+O\( \frac{\mu^\frac{n}2}{\la^\frac{n-2}2}   \)=o\(\frac{\mu^\frac{n-2}2}{\lambda^\frac{n-2}2}  \),
\end{align*}
Here, we used the fact that 
$$
\frac{\mu^\frac{n}2}{\lambda^{n-2}}= o\(\frac{\mu^\frac{n-2}2}{\lambda^\frac{n-2}2}\),
$$
and our choice
$$
\begin{array}{rcll}
&\alpha&<\displaystyle \frac{n^2-6n+16}{2(n-2)}& \hbox{if}\ 7\leq n\leq 9,\\
\displaystyle\frac{n-6}2<&\alpha&< \displaystyle\frac{n^2-6n+16}{2(n-2)} & \hbox{if $n\ge 10$ and $(M,g)$ is locally conformally flat},\\
&\alpha&<\displaystyle\frac{2n}{n-2}&\hbox{if $n\ge 10$ and $(M,g)$ is not locally conformally flat}.
\end{array}
$$ 

In a similar way, if $n\ge 7$, we get
 \begin{align*}
&
 \la^{-n}\frac1{p+1}\int_Mh\[\left(\W_{t,\tau}+\la^{\frac{n-2}2}u_0\)^{p+1}-\W_{t,\tau}^{p+1}-\(\la^{\frac{n-2}2}u_0\)^{p+1}\right.\\ &\hskip4truecm\left.- (p+1)\W_{t,\tau}^{p } \(\la^{\frac{n-2}2}u_0\)- (p+1)\W_{t,\tau} \(\la^{\frac{n-2}2}u_0\)^{p }\right]d\nu_g\\
 &=   \la^{-n}\frac1{p+1}\int_{B_g(\xi,\sqrt\mu)}h\[\left(\W_{t,\tau}+\la^{\frac{n-2}2}u_0\)^{p+1}-\W_{t,\tau}^{p+1}-\(\la^{\frac{n-2}2}u_0\)^{p+1}\right.\\ &\hskip4truecm\left.- (p+1)\W_{t,\tau}^{p } \(\la^{\frac{n-2}2}u_0\)- (p+1)\W_{t,\tau} \(\la^{\frac{n-2}2}u_0\)^{p }\right]d\nu_g\\ &+  \la^{-n}\frac1{p+1}\int_{M\setminus B_g(\xi,\sqrt\mu)}h\[\left(\W_{t,\tau}+\la^{\frac{n-2}2}u_0\)^{p+1}-\W_{t,\tau}^{p+1}-\(\la^{\frac{n-2}2}u_0\)^{p+1}\right.\\ &\hskip4truecm\left.- (p+1)\W_{t,\tau}^{p } \(\la^{\frac{n-2}2}u_0\)- (p+1)\W_{t,\tau} \(\la^{\frac{n-2}2}u_0\)^{p }\right]d\nu_g\\
 &= {o\(\frac{\mu^{2+\alpha}}{\lambda^n}  \)},
\end{align*}
because by Lemma \ref{yyl}
\begin{align*}
&
  \la^{-n}\frac1{p+1}\int_{B_g(\xi,\sqrt\mu)}h\[\left(\W_{t,\tau}+\la^{\frac{n-2}2}u_0\)^{p+1}-\W_{t,\tau}^{p+1}-\(\la^{\frac{n-2}2}u_0\)^{p+1}\right.\\ &\hskip4truecm\left.- (p+1)\W_{t,\tau}^{p } \(\la^{\frac{n-2}2}u_0\)- (p+1)\W_{t,\tau} \(\la^{\frac{n-2}2}u_0\)^{p }\right]d\nu_g\\ &= \la^{-n}\frac1{p+1}\int_{  B_g(\xi,\sqrt\mu)}h\[\left(\W_{t,\tau}+\la^{\frac{n-2}2}u_0\)^{p+1}-\W_{t,\tau}^{p+1}- (p+1)\W_{t,\tau}^{p } \(\la^{\frac{n-2}2}u_0\)\right]d\nu_g\\
  &-\la^{-n}\frac1{p+1}\int_{ B_g(\xi,\sqrt\mu)}h\(\la^{\frac{n-2}2}u_0\)^{p+1}d\nu_g-\la^{-n}\int_{ B_g(\xi,\sqrt\mu)} h \W_{t,\tau} \(\la^{\frac{n-2}2}u_0\)^{p }d\nu_g\\ & =O\(\la^{-n}\int_{  B_g(\xi,\sqrt\mu)}h\left(\la^{\frac{n-2}2}u_0\)^{p+1}d\nu_g \right)+O\(\la^{-n}\int_{  B_g(\xi,\sqrt\mu)}h\W_{t,\tau}^{p-1}\left(\la^{\frac{n-2}2}u_0\)^{2}d\nu_g \right)
  \\ &+O\(\la^{-n}\int_{ B_g(\xi,\sqrt\mu)}  h\W_{t,\tau} \left(\la^{\frac{n-2}2}u_0\)^{p }d\nu_g\right)\\
  &
 =O\(\int_{ B_g(\xi,\sqrt\mu)}\left(d_g(x,\xi)\)^{\alpha+2}d\nu_g\right)+O\(\frac{\mu^2}{\lambda^2} \int_{ B (0,\sqrt\mu)}  \frac{|y|^{\alpha+2}}{|y-\mu\tau|^{4}}dy \)\\
  &+O\(\frac{\mu^\frac{n-2}2}{\lambda^\frac{n-2}2} \int_{ B (0,\sqrt\mu)}  \frac{|y|^{\alpha+2}}{|y-\mu\tau|^{n-2}} dy \)\\ 
  &=O\( \mu^\frac{\alpha+2+n}2\)+O\(\frac{\mu^\frac{\alpha+2+n}2}{\lambda^2}\)+O\(\frac{\mu^\frac{n-2}2}{\lambda^\frac{n-2}2}\mu ^\frac{\alpha+4}2\)=o\(\frac{\mu^{2+\alpha}}{\lambda^n}  \)
  \end{align*}
Here, we used \eqref{star} and 
\begin{align*}
& 
 \la^{-n}\frac1{p+1}\int_{M\setminus B_g(\xi,\sqrt\mu)}h\[\left(\W_{t,\tau}+\la^{\frac{n-2}2}u_0\)^{p+1}-\W_{t,\tau}^{p+1}-\(\la^{\frac{n-2}2}u_0\)^{p+1}\right.\\ &\hskip4truecm\left.- (p+1)\W_{t,\tau}^{p } \(\la^{\frac{n-2}2}u_0\)- (p+1)\W_{t,\tau} \(\la^{\frac{n-2}2}u_0\)^{p }\right]d\nu_g\\ &=
  \frac{\la^{-n}}{p+1}\int_{M\setminus B_g(\xi,\sqrt\mu)}h\[\left(\W_{t,\tau}+\la^{\frac{n-2}2}u_0\)^{p+1}-\(\la^{\frac{n-2}2}u_0\)^{p+1}- (p+1)\W_{t,\tau} \(\la^{\frac{n-2}2}u_0\)^{p }\right]d\nu_g\\ & - \frac{\la^{-n}}{p+1}\int_{M\setminus B_g(\xi,\sqrt\mu)}h\W_{t,\tau}^{p+1}d\nu_g-  \la^{-n} \int_{M\setminus B_g(\xi,\sqrt\mu)} h\W_{t,\tau}^{p } \(\la^{\frac{n-2}2}u_0\)d\nu_g \\ &
 =O\( \la^{-n} \int_{M\setminus B_g(\xi,\sqrt\mu)}h\W_{t,\tau}^{2}\left(\la^{\frac{n-2}2}u_0\)^{p-1}d\nu_g\right)+O\(  \la^{-n} \int_{M\setminus B_g(\xi,\sqrt\mu)}h\W_{t,\tau}^{p+1}d\nu_g\)\\&
 +O\(\la^{-\frac{n+2}2} \int_{M\setminus B_g(\xi,\sqrt\mu)} h\W_{t,\tau}^{p } d\nu_g  \)\\ 
 & =\left\{
   \begin{array}{ll} 
   O\(\frac{\mu^\frac{\alpha+2+n}2}{\la^{n-2}}\)& \hbox{if}\ \alpha<n-6\\
   O\(\frac{\mu^\frac{\alpha+2+n}2}{\la^{n-2}}\)|\ln\mu| & \hbox{if}\ \alpha=n-6\\
   O\(\frac{\mu^{n-2}}{\la^{n-2} }\) & \hbox{if}\ \alpha>n-6\\
   \end{array}\right. +O\(\frac{\mu^\frac{\alpha+2+n}2}{\la^n }  \)
  +O\(\frac{\mu^\frac{n+2}2}{\lambda^\frac{n+2}2}\)\\
  &=o\(\frac{\mu^{2+\alpha}}{\lambda^n}  \)\ \hbox{because $\alpha<n-2$}.
  \end{align*}
Collecting the previous computations we get the result.

\medskip
Let us now prove (i) of $(b)$. Observe that 
\begin{align*}
&J_\la(\UU_\la)\\ &=\underbrace{\frac12 \int_M |\nabla_g u_0|_g^2d\nu_g+\frac12\int_M c(n)R_gu_0^2d\nu_g+  \frac1{p+1}\int_M h  u_0 ^{p+1}d\nu_g}_{\hbox{independent of $\mu$ and $\tau$}}\\ &+\la^{-(n-2)}\left[\frac12 \int_M |\nabla_g \W_{t,\tau}|_g^2d\nu_g+\frac12 \int_M c(n)R_g\W_{t,\tau}^2d\nu_g-\frac1{p+1}\int_M\W_{t,\tau}^{p+1}d\nu_g\right] \\
&+\underbrace{\la^{-\frac{n-2}2}\left[\int_M \langle\nabla_g \W_{t,\tau},\nabla_g u_0\rangle+\int_M c(n)R_g \W_{t,\tau}u_0\right]d\nu_g-\la^{-\frac{n-2}2} \int_Mh \W_{t,\tau}u_0^pd\nu_g}_{=0}\\
&-\la^{-(n-2)}\frac1{p+1}\int_M\[\left(\W_{t,\tau}+\la^{\frac{n-2}2}u_0\)^{p+1}-\W_{t,\tau}^{p+1}-\(\la^{\frac{n-2}2}u_0\)^{p+1}\right.\\ &\hskip4truecm\left.- (p+1)\W_{t,\tau}^{p } \(\la^{\frac{n-2}2}u_0\)- (p+1)\W_{t,\tau} \(\la^{\frac{n-2}2}u_0\)^{p }\right]d\nu_g\\
&+\underbrace{\la^{2}\frac1{p+1}\int_Mu_0 ^{p+1}d\nu_g}_{\hbox{independent of $\mu$ and $\tau$}}\\
& 
-\underbrace{\la^{-(n-2)}\int_M\W_{t,\tau}^{p } \(\la^{\frac{n-2}2}u_0\)d\nu_g}_{\hbox{leading term}}\\
&-\la^{-(n-2)}\int_M\W_{t,\tau} \(\la^{\frac{n-2}2}u_0\)^{p }d\nu_g\\
&+\la^{-n}\frac1{p+1}\int_M h\[ \left(\W_{t,\tau}+\la^{\frac{n-2}2}u_0\)^{p+1}- \W_{t,\tau} ^{p+1}-\( \la^{\frac{n-2}2}u_0\)^{p+1}\right.\\ &\hskip4truecm\left.- (p+1)\W_{t,\tau}^p\(\la^{\frac{n-2}2}u_0\) -(p+1)\W_{t,\tau}\(\la^{\frac{n-2}2}u_0\)^{p}\right]d\nu_g\\
&+\underbrace{\la^{-n}\frac1{p+1}\int_M h\W_{t,\tau} ^{p+1}d\nu_g}_{\hbox{leading term}}\\
&+\la^{-n} \int_M h\W_{t,\tau}^p\(\la^{\frac{n-2}2}u_0\)d\nu_g
\end{align*}
Concerning the leading terms, straightforward computations lead to 
$$
\la^{-(n-2)}\int_M\W_{t,\tau}^{p } \(\la^{\frac{n-2}2}u_0\)d\nu_g=u_0(\xi)\frac{\mu^\frac{n-2}2}{\lambda^\frac{n-2}2}\alpha_n^p\int _{\rr^n}
\frac1{(1+|y|^2)^\frac{n+2}2}dy+o\(\frac{\mu^\frac{n-2}2}{\lambda^\frac{n-2}2}\)$$
and
$$
 \la^{-n}\frac1{p+1}\int_M h  \W_{t,\tau} ^{p+1}d\nu_g=
\frac{\mu^{2+\alpha}}{\lambda^n}\frac{\alpha_n^{p+1}}{p+1}\sum\limits_{i=1}^n\int\limits_{\rr^n}a_i \frac{|y_i+\tau_i|^{2+\alpha}}{(1+|y|^2)^n}dy +o\(\frac{\mu^{2+\alpha}}{\lambda^n}\),
$$
 because $\alpha<n-2$.\\
The higher order terms are estimated as follows. 
By \cite{mipive}, we deduce that
\begin{align*}
\la^{-(n-2)}\left[\frac12 \int_M |\nabla_g \W_{t,\tau}|_g^2d\nu_g \right. & \left. + \frac12 \int_M c(n)R_g\W_{t,\tau}^2d\nu_g - \frac{1}{p+1} \int_M\W_{t,\tau}^{p+1}d\nu_g\right]\\ 
& =\la^{-(n-2)}A(n)+\left\{ 
\begin{array}{lr}
\displaystyle O\(\frac\mu\lambda\)& \hbox{if}\ n=3\\
\displaystyle O\(\frac{\mu^2|\ln\mu|}{\lambda^2}\)& \hbox{if}\ n=4\\
\displaystyle O\(\frac{\mu^2}{\lambda^3}\)&\hbox{if}\ n=5
\end{array}\right. \\
& =\la^{-(n-2)}A(n)+ o\(\frac{\mu^\frac{n-2}2}{\lambda^\frac{n-2}2}\),
\end{align*}
where $A(n)$ is a constant that only depends on $n.$
A simple computation shows that
$$\la^{-(n-2)}\int_M\W_{t,\tau} \(\la^{\frac{n-2}2}u_0\)^{p }d\nu_g=O\(\frac{\mu^\frac{n-2}2}{\lambda^\frac{n-2}2}\lambda^2\int_{B_g(\xi,r)} \frac1{\left(d_g(x,\xi)\)^{n-2}} d\nu_g \right)=o\(\frac{\mu^\frac{n-2}2}{\lambda^\frac{n-2}2}\) 
$$
and
\begin{align*}
\la^{-n} \int_M h\W_{t,\tau}^p\(\la^{\frac{n-2}2}u_0\)d\nu_g&=O\(\frac{\mu^\frac{n+2}2}{\lambda^\frac{n+2}2}\int_{B_g(\xi,r)} \frac1{ \left(d_g(x,\xi)\)^{n-\alpha}} d\nu_g \right)\\ 
&=\left\{\begin{array}{ll}
O\(\frac{\mu^{\frac{n+2}2}}{\la^{\frac{n+2}2}} \)& \hbox{if}\ \alpha>0\\
O\(\frac{\mu^{\frac{n+2}2}}{\la^{\frac{n+2}2}}|\ln \mu|\)& \hbox{if}\ \alpha=0 \end{array}\right.\\
&=o\(\frac{\mu^\frac{n-2}2}{\lambda^\frac{n-2}2}\).
\end{align*}

Finally, by using that
$$\left|(a+b)^{p+1}-a^{p+1}-b^{p+1}-(p+1)ab^p-(p+1)a^pb\right|\le c(n)\(a^2b^{p-1}+a^{p-1}b^2\),
$$
which holds if $p\ge 2$ (this is true if $n=3,4,5$) for any $a,b\ge0$, we get
 \begin{align*}
&
 \la^{-(n-2)}\frac1{p+1}\int_M\[\left(\W_{t,\tau}+\la^{\frac{n-2}2}u_0\)^{p+1}-\W_{t,\tau}^{p+1}-\(\la^{\frac{n-2}2}u_0\)^{p+1}\right.\\ &\hskip4truecm\left.- (p+1)\W_{t,\tau}^{p } \(\la^{\frac{n-2}2}u_0\)- (p+1)\W_{t,\tau} \(\la^{\frac{n-2}2}u_0\)^{p }\right]d\nu_g\\
 &=  O\( \la^{-(n-2)} \int_{M}  \W_{t,\tau}^2\left(\la^{\frac{n-2}2}u_0\)^{p-1}d\nu_g \right)+O\( \la^{-(n-2)} \int_{M}  \W_{t,\tau}^{p-1}\left(\la^{\frac{n-2}2}u_0\)^{2}d\nu_g \right)\\
 &=  {O\( \la^{-(n-4)} \int_{M}  \W_{t,\tau}^2d\nu_g\) }+O\(   \int_{M}  \W_{t,\tau}^{p-1} d\nu_g \)\\
 &= {\left\{\begin{array}{ll}
 O\(  \lambda\mu\) &\hbox{if } n=3\\
 O\(  \mu^2|\ln\mu|\)& \hbox{if } n=4\\
 O\(\lambda^{-1}\mu^2 \)& \hbox{if } n=5\\
 \end{array}\right. }
 +
  \left\{\begin{array}{ll}
  O\(  \mu \)& \hbox{if}\ n=3\\
  O\(\mu^2|\ln\mu|\)& \hbox{if}\ n=4\\
  O\(\mu^2 \)& \hbox{if}\ n=5\\
 \end{array}\right. \\
 &  =o\(\frac{\mu^\frac{n-2}2}{\lambda^\frac{n-2}2}\) 
 \end{align*}
 and
 \begin{align*}
&
 \la^{-n}\frac1{p+1}\int_Mh\[\left(\W_{t,\tau}+\la^{\frac{n-2}2}u_0\)^{p+1}-\W_{t,\tau}^{p+1}-\(\la^{\frac{n-2}2}u_0\)^{p+1}\right.\\ &\hskip4truecm\left.- (p+1)\W_{t,\tau}^{p } \(\la^{\frac{n-2}2}u_0\)- (p+1)\W_{t,\tau} \(\la^{\frac{n-2}2}u_0\)^{p }\right]d\nu_g\\
 &=  O\( \la^{-n} \int_{M}h  \W_{t,\tau}^2 \left(\la^{\frac{n-2}2}u_0\)^{p-1}d\nu_g \right)+ O\( \la^{-n} \int_{M}h  \W_{t,\tau}^{p-1} \left(\la^{\frac{n-2}2}u_0\)^{2}d\nu_g \right)\\
 &=  {O\( \la^{-(n-2)}  \int_{M}h  \W_{t,\tau}^2 d\nu_g \)}+ O\( \la^{-2} \int_{M}h  \W_{t,\tau}^{p-1} d\nu_g  \right)
\\  &=  O\left( \la^{-(n-2)} \mu^{n-2}\)+ O\( \la^{-2} \mu^2   \)\\
&=o\(\frac{\mu^\frac{n-2}2}{\lambda^\frac{n-2}2}\) 
.\end{align*} 
Collecting the previous computations we get the result. \end{proof}

\bibliography{PSCreferces}
\end{document}